%% file: main.tex
\documentclass[11pt]{amsart}
\usepackage{hyperref,graphicx,color,cleveref,enumerate}
\usepackage[margin=1in]{geometry}

\newtheorem{lemma}{Lemma}[section]
\newtheorem{theorem}[lemma]{Theorem}

\newtheorem{proposition}[lemma]{Proposition}
\theoremstyle{definition}
\newtheorem{definition}[lemma]{Definition}

\newtheorem{example}[lemma]{Example}

\newtheorem{mainthm}{Theorem}

\crefname{mainthm}{Theorem}{Theorems}

\theoremstyle{remark}
\newtheorem{remark}[lemma]{Remark}

\usepackage[nodisplayskipstretch]{setspace}
\setdisplayskipstretch{1.5}
\allowdisplaybreaks
\numberwithin{table}{section}

\newcommand{\ZZ}{\mathbb Z}

\newcommand{\QQ}{\mathbb Q}

\newcommand{\CC}{\mathbb C}

\newcommand{\cO}{\mathcal O}

\newcommand{\cT}{\mathcal T}

\newcommand{\cF}{\mathcal F}
\newcommand{\hQ}{\widehat{\mathbb Q}}

\DeclareMathOperator{\tr}{tr}
\DeclareMathOperator{\SL}{SL}

\newcommand{\bmat}[4]{\begin{bmatrix}#1 & #2 \\ #3 & #4\end{bmatrix}}
\newcommand{\bmatcol}[2]{\begin{bmatrix}#1\\ #2\end{bmatrix}}

\begin{document}
\title[Two-bridge knot epimorphisms and intersections]{Two-bridge knot epimorphisms and intersections in the character variety}

\author[Bailly-Hall]{Isidora Bailly-Hall}
\address{University of Minnesota - Twin Cities}
\email{ibaillyh@umn.edu}

\author[Chu]{Michelle Chu}
\address{University of Minnesota - Twin Cities}
\email{mchu@umn.edu}

\author[Dovgodko]{Karina Dovgodko}
\address{Brown University}
\email{karina\_dovgodko@brown.edu}

\author[Ganguly]{Akash Ganguly}
\address{University of Minnesota - Twin Cities}
\email{aganguly@umn.edu}

\author[Kang]{Jiachen Kang}
\address{Harvard University}
\email{jkang@math.harvard.edu}

\author[Sun]{Jishi Sun}
\address{University of Michigan}
\email{joshsun@umich.edu}

\begin{abstract}
We study the $\SL_2(\CC)$ character varieties of hyperbolic two-bridge knot groups admitting epimorphisms onto the trefoil knot group. Ohtsuki, Riley, and Sakuma give a systematic construction of an infinite family of such knots. The character varieties of these knot groups contain a component of irreducible representations arising from the epimorphism, which is distinct from the canonical component. We prove that every affine point of intersection between this component and any other component is algebraic non-integral, and therefore detects an essential surface in the knot complement. We also determine the boundary slope of each detected surface explicitly in terms of the continued fraction expansion of the two-bridge knot.
\end{abstract}

\subjclass[2020]{57K31, 57K10, 57K32}
\keywords{two-bridge knots, character varieties, epimorphisms, essential surfaces}

\maketitle

\input{Introduction}

\input{2Bridge}

\input{Farey}

\input{Vanishing}

\input{Slopes}

\section*{Acknowledgements}
This work is a result of the University of Minnesota School of Mathematics REU in Algebra and Combinatorics during Summer 2023, funded by NSF RTG grant DMS-1745638. The group thanks TAs Tori Braun and Lilly Webster for their guidance during the REU. They also thank Zeus Dantas e Moura for his help with diagrams. MC was partially supported by NSF grants DMS-2243188 and DMS-2441034.

\section*{AI Disclosure}
All mathematical content was developed without the aid of artificial intelligence. The authors used Claude by Anthropic at the final stage of preparation for proofreading, grammatical edits, and checking notation consistency across the text.

\bibliographystyle{alpha}
\bibliography{bib.bib}

\end{document}

%% file: Introduction.tex
\section{Introduction}
\label{sec:intro}

The $\SL_2(\CC)$ character variety has long been an important tool in the study of 3-manifolds.
In \cite{cullershalen}, Culler and Shalen give a general approach based on Bass-Serre theory to construct essential surfaces in the complement of hyperbolic knots using their $\SL_2(\CC)$ character varieties. These essential surfaces arise from non-trivial actions of the knot group on $\SL_2$-trees, and are said to be \emph{detected} by $\SL_2$-trees. Culler and Shalen also describe one method for constructing such trees by using ideal points in $\SL_2(\CC)$ character varieties. Another method of constructing $\SL_2$-trees is via algebraic non-integral (ANI) representations of hyperbolic knot groups. \cite{SchanuelZhang} compared these two methods and proved that any essential surface detected by ideal points is also detected by ANI-representations.

The $\SL_2(\CC)$ character variety of a hyperbolic knot group contains multiple irreducible components, including a \emph{canonical component}, which contains the character of a holonomy representation of the hyperbolic structure.
In \cite{michelle}, the second author studied a family of hyperbolic two-bridge knots whose character varieties are known to contain two distinct irreducible components corresponding to irreducible characters, and proved that the affine points at the intersection of these two components always detect a Seifert surface. In this paper, we extend this pattern to a different family of hyperbolic two-bridge knots: those whose knot groups admit epimorphisms onto the trefoil knot group.

The $\SL_2(\CC)$ character varieties of these knot groups contain an irreducible component corresponding to the character variety of the trefoil knot group, and this component intersects a canonical component at finitely many affine points.

In \cite{ORS}, Ohtsuki, Riley, and Sakuma (ORS) gave a systematic method to construct an infinite family of two-bridge knots whose knot groups admit epimorphisms onto a given two-bridge knot. It is an open question whether all epimorphisms between two-bridge knots arise from their construction. We focus on those two-bridge knot groups in the ORS construction that admit epimorphisms onto the trefoil knot group.
The trefoil knot is the simplest non-trivial two-bridge knot and hence the simplest instance of the ORS construction, making this family a natural starting point for studying the relationship between epimorphisms and essential surfaces.

\begin{mainthm}
    \label{thm:A}
Let $K$ be any hyperbolic two-bridge knot arising from the Ohtsuki-Riley-Sakuma construction, whose knot group $\Gamma$ admits an epimorphism onto the trefoil knot group. In the $\SL_2(\CC)$-character variety of $\Gamma$, any affine point of intersection between the component of irreducible representations that factor through the epimorphism onto the trefoil knot group and any other component of irreducible representations is algebraic non-integral.
\end{mainthm}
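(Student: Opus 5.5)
The plan is to describe the trefoil component explicitly, locate every intersection point on it, and then test integrality of a trace. We use a Riley-type normal form for irreducible representations of the two-bridge group $\Gamma = \langle a,b \mid wa = bw\rangle$:
\[
\rho(a)=\bmat{M}{1}{0}{M^{-1}},\qquad \rho(b)=\bmat{M}{0}{-u}{M^{-1}}.
\]
Irreducible characters are then cut out by the Riley polynomial $\phi_K(M,u)=0$. For the trefoil, with $w=ba$, this polynomial is $\phi_T = M^2+M^{-2}-u-1$, i.e.\ $u = x^2-3$ where $x=\tr\rho(a)$. An ORS knot $K$ with an epimorphism $\Gamma\to\pi_1(S^3\setminus T)$ sending meridians to meridians has continued fraction obtained from $[\epsilon_1 2c_1,\dots]$ of the trefoil ($1/3$) by the ORS reflection pattern. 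The first step is to prove the factorization $\phi_K = \phi_T\cdot \Psi$, where $\Psi$ collects the remaining (non-trefoil) components. The intersection points with other components are then among the common zeros of $\phi_T$ and $\Psi$. We must also handle the possibility that the point is singular on the trefoil component itself through $\Psi$; the factorization shows the trefoil factor appears with multiplicity one, so this reduces to the same computation.

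The key computation is to evaluate $\Psi$ on the trefoil curve. On $u=x^2-3$, the images $\rho(a),\rho(b)$ satisfy the trefoil relation, and $\rho(ab)$ has order $6$ projectively (trace $1$, since $\tr\rho(ab) = x^2-2-u$). The word $w_K$ maps under the epimorphism to an explicit word in the trefoil group, and the ORS construction makes this word a product of blocks $(w_T a^{\pm1})^{\pm}$-type pieces. Using $(ab)^3=-I$ and $aba=bab$, I would reduce $\rho(w_K)$ to a closed form in $x$. I expect $\Psi|_{u=x^2-3}$ to become, up to a unit, a polynomial such as $\sum_i \pm (\text{Chebyshev } S_{n_i}(x))$ built from the Farey/continued-fraction data. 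The Farey machinery in the paper's later sections should index the reflections.

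Next we show the ANI property. A point is algebraic automatically, since it is an isolated solution of integer polynomial equations. For non-integrality we show that $x$ (or equivalently $u$) is not an algebraic integer at every common zero. Concretely, we prove that the restricted polynomial $f(x)=\Psi(x,x^2-3)$, after clearing $M$-denominators into an integer polynomial in $x$, has leading coefficient divisible by a prime (presumably $2$ or $3$) that does not divide the relevant constant-type term. We further show that every root fails to be integral, for instance by proving that $f$ reduced mod $p$ is a nonzero constant times a power or has no roots compatible with integrality. A cleaner route is this: show that $f(x)=c\cdot g(x)$ with $g$ monic over $\ZZ$ would force $f \equiv$ unit mod $2$, contradicting a parity computation of the Chebyshev blocks at $x\equiv$ the trefoil values. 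A root $x$ that is integral makes $\rho$ conjugate into $\SL_2(\cO)$, since the traces of $a$ and $ab$ are integral. So non-integrality of $x$ (or of $u$) suffices.

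The main obstacle is the second step: obtaining a uniform closed form for $\Psi$ on the trefoil curve across the whole infinite ORS family, and extracting from it a non-integrality certificate such as a non-unit leading coefficient with all roots nonintegral. I would first compute small cases, such as $5_1$-free examples like $8_{10}$-type ORS knots $[2,1,\dots]$, to guess the form, and then prove it by induction on the length of the ORS continued fraction using the recursion $\rho(w_{k+1}) = \rho(\text{block})\rho(w_k)$.
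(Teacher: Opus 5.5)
Your overall architecture matches the paper's: restrict to the trefoil component, extract a one-variable integer polynomial whose roots are the intersection points, and rule out monic factors by a congruence. But the step that carries all the content is missing, and as you describe it, it computes the wrong object.

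\textbf{The key computation.} On the curve $u=x^2-3$ every point gives a representation of $\Gamma$; that is what $\phi_T\mid\phi_K$ means. So the Riley entry of $\rho(w_K)$ is \emph{identically zero} there. Reducing $\rho(w_K)$ with $aba=bab$ and $(ab)^3=-I$ can only return this zero, never $\Psi$. Since $\phi_T$ is linear in $u$, what you need is
\[
\Psi|_{u=x^2-3}=-\partial_u\phi_K|_{u=x^2-3},
\]
a first-order quantity transverse to the trefoil component. The trefoil relations you want to use hold only at order zero; for instance $\tr\rho(ab)=1$ fails off the curve. You must expand to first order, by the product rule over the letters of $w_K$ or by computing modulo $(u-x^2+3)^2$, and carry for every partial word both its value on the curve and its first-order correction.

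This is exactly what the paper does with Farey recursion modulo $(w^2-1)^2$. The odd truncations $\alpha_{2k-1}$ of $[3\sigma_1,2\zeta_1,\dots,3\sigma_{n+1}]$ vanish on the trefoil component, and the even ones $\alpha_{2k}$ equal $\pm1$ there (\Cref{lem:cons}). Their first-order parts are
\[
a_\ell=\frac{\cT(\alpha_{\ell-1})}{z(w^2-1)}\Big|_{w=1},\qquad b_\ell=\frac{\cT(\alpha_\ell)^2-1}{w^2-1}(1,z).
\]
These obey the coupled recursion $a_{\ell+2}=\pm a_\ell\pm\sigma_{k+1}b_\ell$ and $b_{\ell+2}=b_\ell\pm2\zeta_{k+1}z^2a_{\ell+2}$, with $a_0=0$, $b_0=1$. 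Your guessed Chebyshev form is not derived, and nothing in your proposal plays the role of this recursion.

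\textbf{The non-integrality certificate.} This is also left undetermined, and the criterion you state first is insufficient. An even leading coefficient with odd constant term does not prevent integral roots: $2x^2+3x+1=(2x+1)(x+1)$ has the root $-1$. What works is that $f$ has positive degree and is congruent to a \emph{nonzero constant} modulo $p$. Then reducing a putative factorization $f=gh$, with $g$ monic and $\deg g\ge1$, modulo $p$ gives a contradiction, so no root is an algebraic integer.

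In the paper this comes out of the recursion above. The even entries $2\zeta_i$ contribute only multiples of $2$, so $b_\ell\equiv1$ and $a_\ell\equiv\sum_{i\le k}\sigma_i\pmod 2$, while $\deg a_\ell=\ell-2$. Hence $\phi_\alpha\equiv\sum_{i=1}^{n+1}\sigma_i\pmod 2$ with $\deg\phi_\alpha=2n\ge2$. The constant is odd precisely because $K$ is a knot (\Cref{lem:epsumodd}). Your proposal never identifies the prime $2$, the mechanism making the polynomial constant mod $2$, or the use of the knot parity condition.

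Your multiplicity-one claim for the trefoil factor also only follows once $\Psi|_{u=x^2-3}\not\equiv0$ is known, which again comes from this congruence. Finally, the remark about conjugating $\rho$ into $\SL_2(\cO)$ is unnecessary: $x=\chi_\rho(a)$ being non-integral is already the ANI condition.
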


Two-bridge knot complements are known to be \emph{small}, in the sense that they do not contain any closed essential surfaces. Any essential surface detected by an ANI representation must have non-empty boundary, and is therefore characterized by its \emph{boundary slope}, the slope at which it meets the boundary torus.
In \cite{hatcherthurston}, Hatcher and Thurston classify all essential surfaces in two-bridge knot complements, using continued fraction expansions corresponding to knot diagrams in \emph{normal form}.

The continued fraction expansions for the knots we consider in \Cref{thm:A} have the form $[3\sigma_1,2\zeta_1,\dots,3\sigma_n,2\zeta_n,3\sigma_{n+1}] $
where $\sigma_i,\zeta_j\in \ZZ_{\ne 0}$ and $\sum_{i=1}^{n+1}\sigma_i\equiv 1\bmod 2$ (see \Cref{prop:ORS} and \Cref{lem:epsumodd} for details).

\begin{mainthm}
\label{thm:B}
Let $K$ and $\Gamma$ be as in \Cref{thm:A}. In the $\SL_2(\CC)$-character variety of $\Gamma$, any affine point of intersection between the component corresponding to the epimorphism to the trefoil knot group and any other component of irreducible representations detects an embedded essential surface in the complement of $K$ having boundary slope $6\sum_{i=1}^{n+1} \sigma_i$.
\end{mainthm}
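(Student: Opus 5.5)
The plan is to use Theorem~A to get an ANI representation $\rho$ at each such intersection point, and then to identify the slope of the surface it detects through a single boundary element whose image is forced to be non-integral. For a two-bridge knot group $\Gamma=\langle a,b\mid aw=wb\rangle$, take $\mu=a$ as the meridian and $\lambda=\tilde w w$ (up to a power of $a$) as the preferred longitude, where $\tilde w$ is $w$ reversed. If $\rho$ is ANI, Bass--Serre theory applied to the $\SL_2$-tree over the non-integral place $v$ (as in \cite{SchanuelZhang}) gives a nontrivial action of $\Gamma$. The elements of $\Gamma$ with $v$-integral traces fix vertices. The detected surface is dual to this action, and its boundary slope $p/q$ is characterized as follows: $\mu^p\lambda^q$ is the unique primitive peripheral element, up to inversion, whose image has integral trace, while the peripheral subgroup is not entirely elliptic. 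Since two-bridge knots are small, the surface has non-empty boundary, so the peripheral subgroup cannot fix a vertex. Thus it suffices to find a peripheral element whose trace is $v$-integral at the intersection points and to show that it generates the kernel of the induced valuation on $\pi_1(\partial)$.

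The candidate for the slope comes from the epimorphism $\phi:\Gamma\to\langle x,y\mid xyx=yxy\rangle$. In the ORS construction, $\phi$ sends meridian to meridian and the longitude of $K$ to a power of the trefoil longitude, possibly times a meridian power. Using the continued fraction $[3\sigma_1,2\zeta_1,\dots,3\sigma_{n+1}]$ from \Cref{prop:ORS} and the parity from \Cref{lem:epsumodd}, I would compute $\phi(\lambda_K)$ in the trefoil group. The expected outcome is $\phi(\lambda_K)=\mu_T^{-6\sum\sigma_i}$ times a central element. The trefoil's Seifert-fibre slope $6$ accounts for the factor $6$, and each $3\sigma_i$ block contributes $\sigma_i$ half-twists about the fibre; the $2\zeta_j$ blocks map trivially, which is presumably what the Farey section establishes. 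Then $\mu^{6\sum\sigma_i}\lambda$ maps to an element of the centre $\langle (xy)^3\rangle$ of the trefoil group. Under any irreducible representation of the trefoil group this image is $\pm I$, so its trace is $\pm2$ on the entire trefoil component, and in particular at the intersection points. This element therefore acts elliptically.

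Next I need that $\mu$ is not elliptic, or more precisely that the peripheral action is nontrivial. I would argue that if every peripheral element had integral trace at $\rho$, then the surface would be closed, contradicting smallness. Since the kernel of the valuation on $\ZZ^2=\pi_1(\partial)$ is cyclic and contains the primitive class $6\sum\sigma_i\,\mu+\lambda$, the slope is $6\sum\sigma_i$. One must check that the trace of $\mu^{p}\lambda$ being $\pm2$ really gives an elliptic, and not parabolic-translating, action. Because $\rho(\mu^p\lambda)=\pm I$ on the nose, it acts trivially on the tree, so this is fine.

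The main obstacle is the combinatorial computation of $\phi(\lambda_K)$ in terms of the continued fraction. This requires tracking the exponent sum and the central power through the ORS word, and confirming that the meridian correction is exactly $6\sum\sigma_i$ with the right sign convention. To do this, I would induct on $n$, using how the ORS construction concatenates copies of the trefoil word $w_T^{\pm1}$ and its reverse, interleaved with $2\zeta_j$-twists. At each step I would keep track of the total exponent sum $\sum\epsilon_i$ of $w$, since this determines the longitude correction.
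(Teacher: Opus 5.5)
Your outline follows the paper's strategy. You obtain an ANI character from \Cref{thm:A}, apply Schanuel--Zhang (\Cref{boundary_slope}), and show that the primitive peripheral element $\mu^{6\sum\sigma_i}\lambda$ is sent to $\pm I$ because the representation factors through $\Gamma_{1/3}$. The tree-theoretic remarks around this are fine. The gap is that the proposal stops where the proof of \Cref{thm:B} begins. The centrality of $\phi(\mu^{6\sum\sigma_i}\lambda)$ is offered only as an ``expected outcome'', supported by a half-twist heuristic. It is also not ``what the Farey section establishes'': the Farey recursion is used only for \Cref{thm:A} and says nothing about the longitude.

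Your framework actually reduces everything to one integer. $\phi(\lambda)$ commutes with the meridian, the centralizer of a meridian in the trefoil group is its peripheral subgroup, and $\phi$ is an isomorphism on abelianizations. Hence $\phi(\lambda)=\lambda_T^{d}$ for some $d\in\ZZ$. The centre is generated by the fibre class $\mu_T^{6}\lambda_T$ (in the paper's orientation conventions), so the detected slope is $6d$. The entire content of \Cref{thm:B} is the identity $d=\sum_{i=1}^{n+1}\sigma_i$, and nothing in your proposal proves it.

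Your plan for this step, inducting on $n$ and tracking the exponent sum of $w$, is not enough as stated. The exponent sum only fixes the meridian correction $\epsilon$ in $\lambda=w^*wa^{\epsilon}$. The power $d$ depends on the image of $w$ itself in $\Gamma_{1/3}$ modulo its centre. The paper obtains this image in three steps.
\begin{enumerate}
\item \Cref{prop:recursionLj} shows how the sign sequence $(-1)^{\lfloor iq/p\rfloor}$ for $\alpha_{j+1}$ is assembled from those for $\alpha_j$ and $\alpha_{j-1}$. When consecutive continued-fraction terms have opposite signs this involves truncated blocks, not whole copies of a trefoil word and its reverse.
\item \Cref{computation_of_epsilon} computes $\epsilon=-6\sum\sigma_i+2-4k$, where $k=\#\{\sigma_i<0\}-\#\{\zeta_i<0\}$.
\item \Cref{computation_of_w} proves $W_b^{+}(\alpha)=\pm BABA^{2k}$ and $W_b^{-}(\alpha)=\pm B^{-1}A^{-1}B^{-1}A^{-2k}$. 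This is a simultaneous induction over even and odd truncations, using $ABA=BAB$ and $(AB)^3=-I$.
\end{enumerate}
The signs of the $\zeta_j$ enter through $k$. This dependence cancels only once everything is assembled into $A^{N}W_b^-(\alpha)^{-1}B^{-2}W_b^+(\alpha)A^{\epsilon}=\pm I$ with $N=6\sum\sigma_i$. So the $2\zeta_j$ blocks do not simply ``map trivially''. To finish, you must either carry out a computation of this kind, or import from the ORS construction an independent proof that $\phi(\lambda)=\lambda_T^{\sum\sigma_i}$ (for instance via the degree of their branched fold map).
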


This paper is organized as follows.
In \Cref{sec:2bridge} we provide the necessary background on two-bridge knots, their character varieties, and the ORS construction.
In \Cref{sec:Farey}, we introduce the Farey recursion framework of \cite{Chesebro,farey-recursion}, which gives an explicit recursive description of the character varieties of two-bridge knots, and collect the properties needed for the remainder of the paper.
In \Cref{sec:vanishing}, we use this framework to analyze the polynomials whose roots correspond to intersection points between the component arising from the epimorphism onto the trefoil knot group and other irreducible components. The section ends with the proof of \Cref{thm:A}.
In \Cref{sec:slope}, we determine the boundary slope detected at each such intersection point and prove \Cref{thm:B}.

%% file: 2Bridge.tex
\section{Two-bridge knots and their character varieties}
\label{sec:2bridge}

In \cite{Schubert}, Schubert showed that every two-bridge link can be associated to a rational number $\alpha$ by a diagram constructed from its continued fraction expansion $[a_1,a_2,\dots,a_m]$ where $(-1)^{i+1}a_i$ denotes the number of half-twists in the $i$th twist region as in \Cref{fig:knot}. Writing $\alpha$ as a reduced fraction $q/p$, the associated link is a knot whenever $p$ is odd, and is otherwise a 2-component link. The reduced fraction is not unique to the link. In the case of knots, we may choose $q/p$ with $0<q<p$ and $q,p$ both odd. The corresponding knot is a torus knot whenever $q=1$ and is otherwise hyperbolic. For example, $1/3$ corresponds to the trefoil knot.

\begin{figure}[ht]
\includegraphics[height=2.5in]{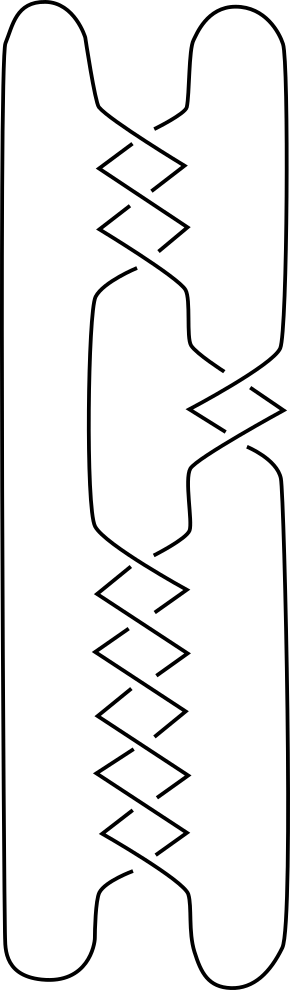}
\caption{Diagram of the two-bridge knot with continued fraction expansion $[3,2,6]$.}\label{fig:knot}
\end{figure}

Let $M_\alpha$ denote the $\mathbb{S}^3$-complement of the two-bridge link corresponding to $\alpha=q/p$ and $\Gamma_\alpha$ denote the fundamental group.
Following Schubert's description, a presentation for the fundamental group is given by
\begin{equation} \label{knotgrouppres}
\begin{gathered}
    \Gamma_{\alpha}=\langle a,b \mid wa=bw \rangle
    \\ \text{where }w=a^{\epsilon_1}b^{\epsilon_2}\dots a^{\epsilon_{p-2}}b^{\epsilon_{p-1}}
    \text{ and } \epsilon_i=(-1)^{\lfloor iq/p \rfloor}
\end{gathered}
\end{equation}
The generators $a,b$ represent meridians in $M_{q/p}$ and are conjugate in the knot group. We take $\mu=a$ to be the preferred meridian and $\lambda=w^*wa^\epsilon$ the corresponding longitude, where $w^*$ denotes $w$ written backwards and $\epsilon=-2\sum\epsilon_i$ chosen so $\lambda$ has exponent sum zero.

\subsection{Character varieties}
We refer the reader to \cite{shalen} for more on this material.
Let $\Gamma$ be a finitely generated group and let $X(\Gamma)$ denote the $\SL_2(\CC)$-character variety of $\Gamma$, that is the set
\[ X(\Gamma) = \{\chi_\rho \mid \rho:\Gamma\rightarrow\SL_2(\CC) \} \]
where the character $\chi_\rho$ of a representation $\rho$ is defined by $\chi_\rho(\gamma)=\tr(\rho(\gamma))$. The character variety $X(\Gamma)$ has the structure of an affine algebraic set.
The characters of reducible representations form an algebraic subset of $X(\Gamma)$ which we will call $X_{red}(\Gamma)$. The closure of the set of characters of irreducible representations will be denoted $X_{irr}(\Gamma)$. Any irreducible component which contains the character of a discrete and faithful representation is contained in $X_{irr}(\Gamma)$ and is denoted $X_0(\Gamma)$ and called a \emph{canonical component}.

In this paper, we are concerned with character varieties of two-bridge knots $\Gamma_\alpha$. In this case, $X(\Gamma_\alpha)$ is an algebraic curve in $\CC^2$ which is realized as the vanishing set of a polynomial $\psi_\alpha(x,y)\in\ZZ[x,y]$ in the variables
\[ x=\chi_\rho(a) \text{ and } y=\chi_\rho (ab^{-1}). \]
We will refer to $\psi_\alpha(x,y)$ as the \emph{defining polynomial} for $X(\Gamma_\alpha)$ and take it to be square-free.

\begin{example}\label{rem:trefoil}
Recall that $\Gamma_{1/3}$ is the trefoil knot group. Its character variety $X(\Gamma_{1/3})$ is cut out by the polynomial $(y-2)(x^2-y-1)$ and has two irreducible components corresponding to the two irreducible factors. Here the factor $(y-2)$ corresponds to the component $X_{red}(\Gamma_{1/3})$ of characters of reducible representations.
\end{example}

For general two-bridge knot groups, the component of characters of reducible representations will always correspond to a factor of $(y-2)$.

\subsection{Knot group epimorphisms}\label{subsec:epi}
If a knot group $\Gamma$ admits an epimorphism onto another knot group $\Gamma'$, then any representation of $\Gamma'$ induces a representation of $\Gamma$ which factors through the epimorphism.
Knot group epimorphisms guarantee the existence of non-canonical irreducible components. Indeed, Theorem 2.3(3) of \cite{hoste-shanahan} states that every component of $X(\Gamma')$ appears as a component of $X(\Gamma)$.

Ohtsuki, Riley, and Sakuma describe in \cite{ORS} how to systematically generate all pairs of two-bridge knot groups admitting epimorphisms. The following proposition is a special case of \cite[Proposition 5.1]{ORS}.

\begin{proposition}[Ohtsuki-Riley-Sakuma]\label{prop:ORS}
Let $K_\alpha$ be a two-bridge link with associated continued fraction expansion $\alpha$ of the form
\[ \alpha =[3\sigma_1,2\zeta_1,\dots,3\sigma_n,2\zeta_n,3\sigma_{n+1}] \]
where $\sigma_i,\zeta_j\in \ZZ_{\ne 0}$.
Then its link group $\Gamma_\alpha$ admits an epimorphism onto the trefoil knot group.
\end{proposition}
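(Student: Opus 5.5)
We would prove \Cref{prop:ORS} by writing down an explicit map on generators and checking that it respects the single relation. Write the trefoil group as $\Gamma_{1/3}=\langle s,t\mid sts=tst\rangle$, where $s,t$ are conjugate meridians. Define $\phi:\Gamma_\alpha\to\Gamma_{1/3}$ by $a\mapsto s$ and $b\mapsto t$. Because $s$ and $t$ generate $\Gamma_{1/3}$, the map is surjective as soon as it is well defined. So the whole argument reduces to showing $\phi(w)\,s=t\,\phi(w)$ in $\Gamma_{1/3}$, where $w=a^{\epsilon_1}b^{\epsilon_2}\cdots b^{\epsilon_{p-1}}$ is the word from \eqref{knotgrouppres}.

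The cleanest route is geometric, via the Schubert/Conway normal form of \Cref{fig:knot}. We view the diagram of $K_\alpha$ as a concatenation of rational tangles: twist regions of $3\sigma_i$ crossings separated by regions of $2\zeta_j$ crossings. We would construct a Wirtinger-type assignment that labels every arc of the diagram by $s$, $t$, or $u:=sts^{-1}$. These three elements are the conjugate meridians of the trefoil, and conjugation by any of them permutes $\{s,t,u\}$ as the transpositions of $S_3$. We then check two local facts.
\begin{enumerate}[(i)]
\item Across an even twist region of $2\zeta$ crossings, the two strands entering with labels $x,y$ leave with the same labels $x,y$. This holds because conjugating twice by the transpositions acts trivially on the pair, using $(st)^3$ central and $sts=tst$.
\item Across a twist region of $3\sigma$ crossings, the labels undergo the braid-like map $(x,y)\mapsto(y^{-1}xy,\,x)$ repeated $3\sigma$ times. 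This map has order $3$ on pairs of distinct labels, since on the triple $\{s,t,u\}$ the braid generator acts with period $3$. So three twists return each label to itself.
\end{enumerate}
Hence, reading left to right, the labels at the start of every twist region agree with the initial labels $(s,t)$. Closing up at the two maxima and minima is consistent, so every Wirtinger relation holds and we obtain a homomorphism sending the two bridge meridians $a,b$ to $s,t$.

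Alternatively, and closer to the paper's algebraic setup, the same argument can be done directly on $w$. The exponent sequence $(\epsilon_i)$ for $\alpha=[a_1,\dots,a_m]$ is built from blocks determined by the $a_i$. We would show by induction on $n$ that the image of each partial word lies in the subgroup generated by $(st)^3$ (central) together with appropriate prefixes, using $ (st^{-1})^{3}$-type identities in the quotient $\Gamma_{1/3}\to \SL_2(\ZZ)$ or $S_3$ to control it. This looks messier, so we prefer the tangle argument.

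We expect the main obstacle to be bookkeeping. We must match the sign conventions, namely $(-1)^{i+1}a_i$ half-twists and the $\epsilon_i$, with the orientation of the twist-region maps. We must also verify the period-$3$ claim in step (ii) honestly in $\Gamma_{1/3}$ itself and not only in $S_3$: the two strands must come back labeled by the same group elements, not merely up to conjugacy. We would check step (ii) by computing $(x,y)\mapsto(y^{-1}xy,x)$ applied three times to $(s,t)$. This gives $(t^{-1}st,\,s)$, then $(s^{-1}t^{-1}stt^{-1}s\ldots)$, and it simplifies to $(s,t)$ using $sts=tst$.
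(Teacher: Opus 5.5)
Your outline is viable: send $a\mapsto s$, $b\mapsto t$ and show that every twist region returns the labels. But the two local facts carrying it are justified by $S_3$ reasoning, and as stated they are false in $\Gamma_{1/3}$.

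\textbf{Where the argument fails.}

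First, conjugation does not permute $\{s,t,u\}$ in $\Gamma_{1/3}$. Indeed $tst^{-1}=s^{-1}ts$, which lies outside $\{s,t,sts^{-1}\}$; equality with $sts^{-1}$ would force $s^2t=ts^2$. A labeling checked only modulo transposition relations is a 3-coloring. It gives an epimorphism onto $S_3$, not onto the trefoil group.

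Second, step (i) fails even in $S_3$. Under the braid move $(x,y)\mapsto(xyx^{-1},x)$, a full twist conjugates \emph{both} labels by the product $xy$. So $2\zeta$ crossings between an $s$-strand and a $t$-strand conjugate by $(st)^{\pm\zeta}$. This is trivial only when $3\mid\zeta$, and centrality of $(st)^3$ gives nothing more. Your picture, in which the pair $(s,t)$ enters every twist region, misdescribes the diagram. Consecutive twist regions twist different pairs of strands. The $2\zeta_j$ regions are harmless for a different reason: they twist the two ends of a single bridge, and those carry the same meridian.

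Third, period $3$ in step (ii) holds in $\Gamma_{1/3}$ exactly for pairs with $xyx=yxy$. It fails if the two strands of a $3\sigma_i$ region are oppositely oriented. In that case the orientation-free labels are $(s^{-1},t)$, and $s^{-1}ts^{-1}\neq ts^{-1}t$ already in the abelianization. So what you call bookkeeping is the actual content.

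Your rule $(x,y)\mapsto(y^{-1}xy,x)$ is also not a Wirtinger rule, since the over-strand label must survive the crossing. It agrees with $(x,y)\mapsto(xyx^{-1},x)$ only when $xyx=yxy$. The same objection hits your alternative route: identities checked in $S_3$ do not lift to $\Gamma_{1/3}$. Identities checked in $\SL_2(\ZZ)$ could be lifted, because the kernel is generated by the central element $(st)^6$ and is detected by exponent sum, but you would have to argue this.

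\textbf{How to repair it.} Work with the plat presentation, where orientations never enter. Realize $K_\alpha$ as the plat closure of
$\sigma_2^{\pm3\sigma_1}\sigma_1^{\pm2\zeta_1}\sigma_2^{\pm3\sigma_2}\cdots\sigma_1^{\pm2\zeta_n}\sigma_2^{\pm3\sigma_{n+1}}$.
The signs play no role below. The group is generated by loops $x_1,\dots,x_4$ around the top strands. The relations are $x_1x_2=x_3x_4=1$ for the top caps, and $y_1y_2=y_3y_4=1$ for the bottom caps, where $(y_i)$ is the image of $(x_i)$ under the Artin/Hurwitz action of the braid.

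Send $(x_1,x_2,x_3,x_4)\mapsto(s,s^{-1},t^{-1},t)$.
\begin{itemize}
\item Each $\sigma_1^{\pm2}$ fixes this tuple, because it conjugates positions $1,2$ by $s\cdot s^{-1}=1$.
\item Each $\sigma_2^{\pm3}$ fixes it, because $s^{-1}t^{-1}s^{-1}=t^{-1}s^{-1}t^{-1}$.
\end{itemize}
So every block, and hence the whole braid, fixes the tuple. The bottom-cap relations therefore hold, and the image contains the generators $s$ and $t^{-1}$. This works for all $\sigma_i,\zeta_j$, for knots and links alike.

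For knots it also settles your orientation worry. Since $s$ and $t^{-1}$ are not conjugate, the oriented meridian $b$ must be $x_3^{-1}\mapsto t$.

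Repaired this way, your argument is a self-contained alternative to the paper. The paper does not prove the proposition at all: it cites \cite[Proposition 5.1]{ORS} and omits the conversion between continued fraction expansions.
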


The form of the continued fraction expansion in \Cref{prop:ORS} differs from that stated in \cite[Proposition 5.1]{ORS} by a change of equivalent continued fraction expansion. We omit the straightforward but lengthy computation.

\begin{remark}
    The condition for $K_\alpha$ as in \Cref{prop:ORS} to be a knot rather than a 2-component link is that $\alpha=q/p$ where $p$ is odd. We will show in \Cref{lem:epsumodd} that this happens exactly when $\sum_{i=1}^{n+1}\sigma_i$ is odd.
\end{remark}

The main object of study of this paper is hyperbolic two-bridge knot groups $\Gamma_\alpha$ that have epimorphisms onto $\Gamma_{1/3}$. The character variety of such knot groups will have at least three components: $X_{red}(\Gamma_\alpha)$ cut out by $(y-2)$, a canonical component $X_0(\Gamma_\alpha)$, and a component $X_{tre}(\Gamma_\alpha)$ cut out by $(x^2-y-1)$ coming from the epimorphism onto the trefoil knot group.

\begin{definition}\label{def:phi}
Let $\alpha$ have continued fraction expansion as in \Cref{prop:ORS} and $\psi_\alpha(x,y)$ be the defining polynomial for $X(\Gamma_\alpha)$.
Define the \emph{vanishing polynomial}
\[ \phi_\alpha(x)= \frac{ \psi_\alpha(x,x^2-1)}{x^2-3}.  \]
\end{definition}

Since ${x^2-3}$ defines the component corresponding to the trefoil knot group, $\phi_\alpha(x)\in\ZZ[x]$.
Any point in the intersection of $X_{tre}(\Gamma_\alpha)$ with another component of $X(\Gamma_\alpha)$ of characters of irreducible representations has form $(x_0,x_0^2-1)$ where $x_0$ is a root of $\phi_\alpha$.
The roots of $\phi_\alpha$ are therefore precisely the $x$-coordinates of the intersection points we study in \Cref{thm:A} and \Cref{thm:B}.

\subsection{Detecting essential surfaces}
A character $\chi_\rho$ of representation $\rho:\Gamma\rightarrow\SL_2(\CC)$ with image contained in $\SL_2(F)$ for a number field $F$ is called \emph{algebraic non-integral}, or \emph{ANI} for short, if there exists a discrete valuation $\nu:F\rightarrow\ZZ$ and an element $\gamma\in\Gamma$ such that $\nu(\chi_\rho(\gamma))<0$. Such a valuation exists whenever $\chi_\rho(\gamma)$ is not an algebraic integer.
Whenever $\Gamma$ is the fundamental group of a 3-manifold, such an ANI character is said to ``detect'' an essential surface.
As in \cite{cullershalen}, an ANI character induces a non-trivial action on the Bruhat-Tits tree for $\SL_2$ with respect to the valuation $\nu$ and this action induces a splitting of the 3-manifold along an essential surface.

Two-bridge knot complements are small in the sense that they do not contain closed essential surfaces (see \cite{hatcherthurston}). Any essential surface will therefore meet the boundary torus along simple closed curves of the same slope. The set of slopes realized by essential surfaces in a two-bridge knot complement $M_\alpha$ are known as \emph{boundary slopes}. The following is a restatement of \cite[Corollary 3]{SchanuelZhang} in the case of two-bridge knots.

\begin{theorem}[Schanuel-Zhang]\label{boundary_slope}
Let $M_\alpha$ be a two-bridge knot and $\chi_\rho$ an ANI character of $\Gamma_\alpha$ with respect to a discrete valuation $\nu$. Then there exists a unique slope $\delta$ in the boundary of $M_\alpha$ such that $\nu(\chi_\rho(\delta))>0$, and an essential surface $S$ in $M_\alpha$ with non-empty boundary of slope $\delta$.
\end{theorem}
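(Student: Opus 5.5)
The plan is to deduce the statement from the Culler--Shalen construction applied to the Bruhat--Tits tree, together with the fact that two-bridge knot complements are small. We do not reprove the general Schanuel--Zhang comparison with ideal points.

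\textbf{Step 1: a non-trivial tree action.} Let $F$ be a number field containing the entries of $\rho(\Gamma_\alpha)$, and let $\nu$ be the given discrete valuation with $\nu(\chi_\rho(\gamma))<0$ for some $\gamma$. Then $\SL_2(F)$ acts on the Bruhat--Tits tree $T_\nu$. An element whose trace has negative valuation is not contained in any conjugate of $\SL_2(\cO_\nu)$. Hence $\rho(\gamma)$ fixes no vertex and acts hyperbolically, with translation length $-2\nu(\tr\rho(\gamma))$. So the induced action of $\Gamma_\alpha$ on $T_\nu$ has no global fixed vertex, and we may assume it has no inversions after subdividing.

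\textbf{Step 2: the dual essential surface has boundary.} Following \cite{cullershalen}, choose an equivariant map from the universal cover of $M_\alpha$ to $T_\nu$, transverse to edge midpoints. Pull back the midpoints and compress, discarding boundary-parallel and sphere pieces. This yields a non-empty essential surface $S$ such that each component of $M_\alpha\setminus S$ has image fixing a vertex. Suppose the peripheral subgroup $P\cong\ZZ^2$ fixed a vertex. Then the map can be chosen constant near $\partial M_\alpha$, and $S$ could be taken closed. This contradicts the fact that two-bridge knot complements are small \cite{hatcherthurston}. Hence $P$ fixes no vertex, and $\partial S\neq\emptyset$.

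\textbf{Step 3: the slope $\delta$ and its uniqueness.} This is the main point. Since $P$ is abelian and fixes no vertex, a standard tree lemma gives a $P$-invariant line on which $P$ acts by translations. The translation length is then a non-zero homomorphism $\ell:P\to\ZZ$. Its kernel is infinite cyclic, generated by a primitive class $\delta$, which is a slope. The elements of $\ker\ell$ fix the line pointwise, so they lie in vertex stabilizers and have integral trace. Any slope outside the kernel acts hyperbolically, and its trace has negative valuation.

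We also need that $\partial S$ has slope $\delta$. A boundary curve of $S$ maps into an edge stabilizer, so it fixes an edge and lies in $\ker\ell$. Uniqueness of $\delta$ follows because $\ker\ell$ has rank one.

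\textbf{Expected obstacle: the strict inequality.} The delicate part is matching the strict inequality $\nu(\chi_\rho(\delta))>0$ in the statement. Step 3 only gives that $\rho(\delta)$ lies in a vertex stabilizer, that is $\nu(\chi_\rho(\delta))\ge 0$. To sharpen this, I would conjugate so that $\rho(P)$ is upper triangular; this is possible since $P$ is abelian and preserves an end of the line. Then I would compare the diagonal entries $t^{\pm1}$ of $\rho(\mu)$ and the powers appearing in $\rho(\lambda)$. The condition $\ell(\delta)=0$ forces a cancellation of valuations in the diagonal entries of $\rho(\delta)$. I expect this to be the step requiring care, as in \cite[Corollary 3]{SchanuelZhang}, and I would follow their argument there rather than reconstruct it independently.
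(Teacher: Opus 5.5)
Your Steps 1--3 are sound, and they supply an argument where the paper gives none; the paper simply cites \cite[Corollary 3]{SchanuelZhang}. Together they show three things: there is a unique slope $\delta$ with $\nu(\chi_\rho(\delta))\ge 0$, every other slope has trace of negative valuation, and $\partial S$ has slope $\delta$. The ingredients are the non-triviality of the action on the Bruhat--Tits tree, smallness forcing the peripheral subgroup $P$ to fix no vertex, the translation-length homomorphism $\ell\colon P\to\ZZ$, and the fact that boundary curves of $S$ lie in edge stabilizers.

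The gap is your final step, and it cannot be closed the way you propose. After putting $\rho(P)$ in upper triangular form, the condition $\ell(\delta)=0$ says exactly that the eigenvalue $u$ of $\rho(\delta)$ satisfies $\nu(u)=0$. That gives $\nu(u+u^{-1})\ge 0$ and nothing more. Strict positivity would require $u^2\equiv -1$ modulo the maximal ideal, and neither the tree action nor the knot group forces this.

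Concretely, suppose $\tr\rho(\delta)=\pm 2$, as in the proof of \Cref{thm:B}, where $\tr\rho(\mu^N\lambda)=\pm 2$. If $\nu$ has odd residue characteristic, then $\nu(\chi_\rho(\delta))=\nu(2)=0$. Combined with your Step 3, no slope at all then has $\nu(\chi_\rho)>0$, so the literal ``$>0$'' version fails for such $\nu$.

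What the tree argument yields is integrality, $\nu(\chi_\rho(\delta))\ge 0$, and that is all the paper actually uses: the proof of \Cref{thm:B} only checks $\pm 2\in\cO_{\mathfrak K}$. The strict inequality in the statement should be read as integrality. In the paper's application the strict version happens to hold, because one may take $\nu$ above $2$ (by the parity argument in the proof of \Cref{thm:A}), and then $\nu(\pm 2)>0$. That is special to $\nu(2)>0$.

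So do not defer a ``sharpening'' to Schanuel--Zhang: it is neither provable by this route nor needed. State the conclusion as $\nu(\chi_\rho(\delta))\ge 0$, and your Steps 1--3 then form a complete proof.
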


%% file: Farey.tex
\section{Farey recursions and characters of irreducible representations}
\label{sec:Farey}

In this section we introduce the Farey recursive framework developed by Chesebro and his collaborators in \cite{Chesebro,farey-recursion} which provides an explicit recursive description of the character varieties of two-bridge knots.
We collect here the properties of this framework that are needed in \Cref{sec:vanishing,sec:slope}.

\subsection{Farey pairs}
\label{subsec:farey}
If $q/p$ and $s/r$ are reduced fractions in $\hQ=\QQ\cup\{\infty\}$ such that $qr-ps=\pm1$, then $\{q/p,s/r\}$ is called a \emph{Farey pair}. We make the convention to write $1=1/1$, $0=0/1$, $\infty=1/0$, and $q/p$ with $p\geq0$.

Given a Farey pair $\{q/p,s/r\}$, their \emph{Farey sum} is defined to be
\[ \frac{q}{p}\oplus\frac{s}{r}=\frac{q+s}{p+r},
\]
which can be checked to be already a reduced fraction. One may similarly define their negative Farey sum to be
\[ \frac{q}{p}\oplus^{-1}\frac{s}{r}=\frac{q-s}{p-r},
\]
which can also be checked to be already a reduced fraction possibly with negative denominator.

A quick calculation shows that if $\{\alpha,\beta\}$ is a Farey pair, then the two triples $\{\alpha,\beta,\alpha\oplus\beta\}$ and $\{\alpha,\beta,\alpha\oplus^{-1}\beta\}$ form a \emph{Farey triple}, meaning a collection of three elements of $\hQ$ with the property that any two of them form a Farey pair.
We may therefore take repeated (positive and negative) Farey sums: define for a Farey pair $\{q/p,s/r\}$ and integer $k$,
\[\frac{q}{p}\oplus^k\frac{s}{r}= \frac{q+ks}{p+kr}.
\]

We use the notation in \cite{farey-recursion} to give a matrix form of continued fractions.
Consider $\alpha\in\hQ$ with continued fraction expansion $[a_1,\dots,a_m]$ of length $m$. For $1\leq j\leq m$, let $\alpha_j$ be the element of $\hQ$ with continued fraction expansion $[a_1,\dots,a_j]$. By convention, we choose to write $\alpha_j=q_j/p_j$ where $p_j\geq 0$. Define $\eta_j\in\{1,-1\}$ so that
\[ \begin{bmatrix} q_j  \\ p_j \end{bmatrix} = \eta_j
\begin{bmatrix} 0 & 1 \\ 1 & a_1 \end{bmatrix}
\begin{bmatrix} 0 & 1 \\ 1 & a_2 \end{bmatrix} \cdots
\begin{bmatrix} 0 & 1 \\ 1 & a_j \end{bmatrix}
\begin{bmatrix} 0  \\ 1 \end{bmatrix}.
\]

\begin{lemma}\label{lem:farey_truncs}
Suppose $\alpha\in\hQ$ has continued fraction expansion $[a_1,\dots,a_m]$. Let $\alpha_{-1}=1/0$ and $\alpha_0=0/1$. Then for all $0\leq j\leq m$, the pair $\{\alpha_{j-1},\alpha_j\}$ is a Farey pair. Furthermore,
\[ \alpha_j=\alpha_{j-2} \oplus^{\eta_{j-2}\eta_ja_j} \alpha_{j-1} .
\]
\end{lemma}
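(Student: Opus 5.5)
The plan is to encode the truncations by matrix products and read off both claims from determinants and the column recursion. For $j\geq 0$ set
\[ P_j=\begin{bmatrix} 0 & 1 \\ 1 & a_1 \end{bmatrix}\cdots\begin{bmatrix} 0 & 1 \\ 1 & a_j \end{bmatrix}, \]
with $P_0$ the identity. The second column of $P_j$ is $P_j\bmatcol{0}{1}=\eta_j\bmatcol{q_j}{p_j}$. Because
\[ P_j=P_{j-1}\begin{bmatrix} 0 & 1 \\ 1 & a_j \end{bmatrix}, \]
the first column of $P_j$ equals the second column of $P_{j-1}$. Thus
\[ P_j=\begin{bmatrix} \eta_{j-1}q_{j-1} & \eta_jq_j \\ \eta_{j-1}p_{j-1} & \eta_jp_j\end{bmatrix}. \]
For $j=0$ this reads $P_0=I$, which is consistent with $\alpha_{-1}=1/0$, $\alpha_0=0/1$ and $\eta_{-1}=\eta_0=1$.

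For the Farey pair claim, take determinants. Each factor has determinant $-1$, so $\det P_j=(-1)^j$. Hence
\[ \eta_{j-1}\eta_j(q_{j-1}p_j-p_{j-1}q_j)=(-1)^j, \]
which gives $q_{j-1}p_j-p_{j-1}q_j=\pm1$. This identity also shows that $\gcd(q_j,p_j)=1$, so the fractions $q_j/p_j$ are already reduced. Therefore $\{\alpha_{j-1},\alpha_j\}$ is a Farey pair for all $0\le j\le m$. I should also check that the sign convention $p_j\ge 0$ determines $\eta_j$ unambiguously. The only delicate case is $p_j=0$; there $q_j=\pm1$, and I would fix the sign by the paper's convention $\infty=1/0$.

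For the recursion, compute the second column of $P_j$ directly:
\[ P_j\bmatcol{0}{1}=P_{j-1}\bmatcol{1}{a_j}=(\text{first column of }P_{j-1})+a_j(\text{second column of }P_{j-1}). \]
Substituting the column description gives
\[ \eta_j\bmatcol{q_j}{p_j}=\eta_{j-2}\bmatcol{q_{j-2}}{p_{j-2}}+a_j\eta_{j-1}\bmatcol{q_{j-1}}{p_{j-1}}. \]
Multiplying through by $\eta_j\eta_{j-2}$ yields
\[ \pm\bmatcol{q_j}{p_j}=\bmatcol{q_{j-2}}{p_{j-2}}+\eta_{j-2}\eta_{j-1}\eta_j\,a_j\bmatcol{q_{j-1}}{p_{j-1}}. \]
As fractions, this says $\alpha_j=\alpha_{j-2}\oplus^{k}\alpha_{j-1}$ with $k=\pm a_j$. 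The sign of $k$ is a product of $\eta$'s, and it must be matched with the stated exponent $\eta_{j-2}\eta_ja_j$.

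The main obstacle is exactly this sign bookkeeping. The computation produces the factor $\eta_{j-2}\eta_{j-1}\eta_j$, while the statement has $\eta_{j-2}\eta_j$. I would resolve this by examining how $\oplus^k$ depends on the chosen representatives. The operation $\frac{q}{p}\oplus^k\frac{s}{r}=\frac{q+ks}{p+kr}$ is defined using representatives with $p,r\ge0$. Since $p_{j-1}\ge 0$ by convention, the normalization by $\eta_{j-1}$ is already built into the fraction $\alpha_{j-1}=q_{j-1}/p_{j-1}$. So the extra factor $\eta_{j-1}$ is a choice of sign for the middle vector, not an independent sign. To settle this, I would recompute with the normalized vectors $\bmatcol{q_{j-1}}{p_{j-1}}$ instead of the columns of $P_{j-1}$. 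I would then verify on small cases, such as $[3]$ and $[3,2]$, which convention reproduces the formula in the statement. If the statement's sign turns out to depend on this convention, I would adjust the final identification accordingly.
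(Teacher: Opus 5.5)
Your setup is sound and is the same matrix computation the paper uses. Your determinant argument ($\det P_j=(-1)^j$) for the Farey-pair claim is cleaner than the paper's induction, and it also shows that each $q_j/p_j$ is reduced.

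The gap is in the second claim. Its whole content is the sign of the exponent, and you leave it unresolved after an algebra slip. Write $v_i=\bmatcol{q_i}{p_i}$. Multiplying $\eta_jv_j=\eta_{j-2}v_{j-2}+a_j\eta_{j-1}v_{j-1}$ by $\eta_j\eta_{j-2}$ leaves the coefficient $\eta_j$ on $v_{j-2}$, so your displayed identity with $\eta_{j-2}\eta_{j-1}\eta_j$ is wrong. Multiply by $\eta_{j-2}$ alone instead:
\[ \eta_{j-2}\eta_j\,v_j=v_{j-2}+\eta_{j-2}\eta_{j-1}a_j\,v_{j-1}. \]
All three vectors are the normalized representatives, so this says exactly $\alpha_j=\alpha_{j-2}\oplus^{\eta_{j-2}\eta_{j-1}a_j}\alpha_{j-1}$.

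Your proposed reconciliation, treating $\eta_{j-1}$ as a removable choice of sign for the middle vector, cannot work. The operation $\oplus^k$ is evaluated on representatives with nonnegative denominators. Because $\det\bmat{q_{j-2}}{q_{j-1}}{p_{j-2}}{p_{j-1}}=\pm1$, the vectors $v_{j-2}+kv_{j-1}$ and $v_{j-2}-kv_{j-1}$ are not proportional for $k\neq0$. So the exponent is uniquely determined, and no convention can absorb a sign.

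In fact the exponent $\eta_{j-2}\eta_ja_j$ as printed does not hold in general, so no bookkeeping could have produced it. Take $[3,-2]$. Then $P_2\bmatcol{0}{1}=\bmatcol{-2}{-5}$, so $\eta_0=\eta_1=1$ and $\eta_2=-1$. We get $\alpha_2=2/5=\frac01\oplus^{-2}\frac13$, whereas $\eta_0\eta_2a_2=2$ would give $\frac01\oplus^{2}\frac13=2/7$.

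The paper's own proof splits into the cases $\eta_{j-2}=\eta_{j-1}$ and $\eta_{j-2}\neq\eta_{j-1}$, which yields $\eta_{j-2}\eta_{j-1}a_j$. The $\eta_{j-2}\eta_j$ in its final line is a typo, and it is harmless downstream because only $\oplus^{\pm a_j}$ is ever used.

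Your planned checks on $[3]$ and $[3,2]$ would not reveal any of this, since every $\eta$ equals $1$ there; you need an expansion with a negative entry. With the corrected normalization, your argument proves the lemma with exponent $\eta_{j-2}\eta_{j-1}a_j$ and is essentially the paper's proof.
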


\begin{proof}
When $j=0$, $\{\alpha_{-1},\alpha_0\}=\{1/0,0/1\}$ is clearly a Farey pair. Continuing with induction on $j$, assume $\{\alpha_{j-2},\alpha_{j-1}\}$ is a Farey pair. Note that
\begin{align*} & \frac{1}{\eta_{j-2}}  \bmatcol{q_{j-2}}{p_{j-2}} + a_j \left(\frac{1}{\eta_{j-1}}  \bmatcol{q_{j-1}}{p_{j-1}}\right) \\
& = \bmat{0}{1}{1}{a_1}\cdots \bmat{0}{1}{1}{a_{j-2}}\bmatcol{0}{1}+a_j \left( \bmat{0}{1}{1}{a_1}...\bmat{0}{1}{1}{a_{j-1}}\bmatcol{0}{1}\right)\\
 & = \left( \bmat{0}{1}{1}{a_1}...\bmat{0}{1}{1}{a_{j-2}} \right) \left( \bmatcol{0}{1}+\bmat{0}{1}{1}{a_{j-1}}\bmatcol{0}{a_j} \right)\\
 & = \left( \bmat{0}{1}{1}{a_1}...\bmat{0}{1}{1}{a_{j-2}} \right) \left( \bmat{0}{1}{1}{a_{j-1}}\bmat{0}{1}{1}{a_j}\bmatcol{0}{1} \right)\\
  & = \frac{1}{\eta_{j}} \bmatcol{q_{j}}{p_{j}} .
\end{align*}
When $\eta_{j-2}=\eta_{j-1}$, the first equality is equal to $\pm\bmatcol{q_{j-2}+a_j q_{j-1}}{p_{j-2}+a_j p_{j-1}}$, which corresponds to the Farey sum $\alpha_{j-2}\oplus^{a_j} \alpha_{j-1}$.
When $\eta_{j-2}\neq\eta_{j-1}$, the first equality is equal to $\pm\bmatcol{q_{j-2}-a_j q_{j-1}}{p_{j-2}-a_j p_{j-1}}$, which corresponds to the fraction $\alpha_{j-2}\oplus^{-a_j} \alpha_{j-1}$. Therefore $\alpha_j=\alpha_{j-2} \oplus^{\eta_{j-2}\eta_ja_j} \alpha_{j-1}$ and it follows that $p_{j-1}q_{j}-p_{j}q_{j-1}=p_{j-1}q_{j-2}-p_{j-2}q_{j-1}$, so $\{\alpha_{j-1},\alpha_j\}$ is a Farey pair.
\end{proof}

\subsection{Farey recursive functions}

Let $R$ be a commutative ring. A function $\cF:\hQ\to R$ is called a \emph{Farey recursive function} (with determinant $1$ as in \cite[\S5.1]{Chesebro}) if for any Farey pair $(\alpha,\beta)\in \hQ^2$, we have
\begin{equation} \label{eq:frf 2}
	\cF(\beta\oplus^2\alpha)=-\cF(\beta)+\cF(\alpha)\mathcal{F}(\beta\oplus\alpha).
\end{equation}
It follows that whenever $\cF$ is a Farey recursive function, we also have
\begin{equation} \label{eq:frf-2}
	\cF(\beta\oplus^{-2}\alpha)=-\cF(\beta)+\cF(\alpha)\mathcal{F}(\beta\oplus^{-1}\alpha).
\end{equation}

Let $\cT:\hQ\rightarrow\ZZ[w,z]$ be the unique Farey recursive function satisfying
\[ \cT(0)=w \text{, } \cT(1/0)=0 \text{, and } \cT(1)=z,
\]
(see \cite[Theorem 4.2]{farey-recursion}) and define $\cT_0:\hQ\rightarrow \ZZ[w,z]$ by
\begin{equation} \label{eq:T0def}
 \cT_0(\alpha)=\frac{\cT(\alpha)}{f(\alpha)} \text{, with }
f(q/p) = \begin{cases}
w &\text{if $q$ is even}\\
z &\text{if $p$ and $q$ are odd}\\
wz &\text{if $p$ is even, $q$ is odd}.
\end{cases}
\end{equation}

The following lemma is a restatement of \cite[Lemma 5.5]{Chesebro}.

\begin{lemma}\label{lem:T0}
For all $\alpha\in \hQ$, we have $\cT_0(\alpha)\in \ZZ[w^2,z^2]$.
\end{lemma}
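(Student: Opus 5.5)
The plan is to prove the lemma by induction along the Farey graph, starting from the three seed values and propagating parity information through the recursion \eqref{eq:frf 2}. First I record the three parity classes of a reduced fraction $q/p$: (even, odd), (odd, odd) and (odd, even). These are the classes of $0/1$, $1/1$ and $1/0$, on which $f$ takes the values $w$, $z$ and $wz$. For a Farey pair $\{\alpha,\beta\}$ the two fractions lie in different classes, since the determinant $qr-ps=\pm1$ is odd. Consequently $\alpha$, $\beta$ and $\alpha\oplus\beta$ represent all three classes, and $\beta\oplus^2\alpha$ lies in the same class as $\beta$.

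The key step is a stronger inductive claim that carries the parity information along. I claim that for every $\alpha$, $\cT(\alpha)$ equals $f(\alpha)$ times an element of $\ZZ[w^2,z^2]$, where $f(1/0)=wz$ is interpreted so that $\cT(1/0)=0=wz\cdot 0$. Equivalently, under the $\ZZ/2\times\ZZ/2$ grading of $\ZZ[w,z]$ by the parities of the degrees in $w$ and in $z$, $\cT(\alpha)$ is homogeneous of degree $(1,0)$, $(0,1)$ or $(1,1)$ according to the class of $\alpha$. This holds for the seeds $0$, $1/0$ and $1$. Every element of $\hQ$ is reached from the seed triple $\{0,1/0,1\}$ by repeatedly applying the recursions \eqref{eq:frf 2} and \eqref{eq:frf-2}, i.e.\ by moving through adjacent Farey triangles. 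For this I would invoke the uniqueness/existence statement \cite[Theorem 4.2]{farey-recursion}, or argue directly by induction on $|p|+|q|$. Given the Farey triple $\{\alpha,\beta,\alpha\oplus\beta\}$, the new value is
\[
\cT(\beta\oplus^2\alpha)=-\cT(\beta)+\cT(\alpha)\cT(\alpha\oplus\beta).
\]
Here $\cT(\beta)$ has the grading of the class of $\beta$. The product $\cT(\alpha)\cT(\alpha\oplus\beta)$ has grading equal to the sum of the gradings of the two other classes, and the three class gradings $(1,0)$, $(0,1)$, $(1,1)$ sum to zero in $\ZZ/2\times\ZZ/2$, so this sum is again the grading of the class of $\beta$. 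Hence $\cT(\beta\oplus^2\alpha)$ is homogeneous of the grading of its class, and the same computation handles $\oplus^{-2}$.

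To finish, I observe that a homogeneous element of grading $(1,0)$, $(0,1)$ or $(1,1)$ is divisible by $w$, $z$ or $wz$ respectively, with quotient of grading $(0,0)$, i.e.\ lying in $\ZZ[w^2,z^2]$. This gives $\cT_0(\alpha)\in\ZZ[w^2,z^2]$.

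The main obstacle is making the induction over $\hQ$ rigorous, namely that every rational is obtained from the seeds via steps of the form $\beta\oplus^{\pm2}\alpha$ with the other two triple members already known. I would handle this by descending through the Stern–Brocot/Farey tree. Each $\gamma\ne 0,1,1/0$ is $\beta\oplus^{\pm 2}\alpha$ for the unique Farey triangle adjacent to the one containing its two Farey parents. Those parents, and the third vertex of that triangle, have strictly smaller denominators (or are seeds), so the induction on $|p|+|q|$ closes.
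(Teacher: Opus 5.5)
Your proof is correct. The paper does not actually prove Lemma~\ref{lem:T0}; it quotes it as \cite[Lemma 5.5]{Chesebro}. So your argument is a self-contained replacement for a citation, not a variant of an argument in the text.

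Your key idea is sound. The map $q/p\mapsto(q,p)\bmod 2$ sends every Farey triple bijectively onto the three nonzero vectors of $(\ZZ/2)^2$, and this matches additively the $(\deg_w,\deg_z)\bmod 2$ gradings of $w$, $z$, $wz$. You then induct on the stronger statement that $\cT(\alpha)$ is homogeneous of the grading of its class, and that is exactly what lets \eqref{eq:frf 2} and \eqref{eq:frf-2} preserve the property. The citation buys brevity. Your argument buys independence from checking that the paper's normalizations of $\cT$ and $f$ agree with Chesebro's.

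Two small remarks. First, for an integer $\gamma=n$, the parents $n\mp1$, $1/0$ and the third vertex $n\mp2$ do not have strictly smaller denominators. What drops is $|p|+|q|$, and for $\gamma=-1$ all three are seeds. So the measure you name at the end, not denominators, is the right one.

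Second, you can bypass the descent by using the uniqueness in \cite[Theorem 4.2]{farey-recursion} directly, rather than only mentioning it. Set $\epsilon_w(q/p)=-(-1)^{pq}$. Then the function $\gamma\mapsto\epsilon_w(\gamma)\,\cT(\gamma)(-w,z)$ is again Farey recursive, because $\epsilon_w$ is constant on parity classes and its product over any Farey triple is $1$. It also takes the values $w,0,z$ at $0,1/0,1$, so it equals $\cT$. The analogous twist by $(-1)^q$ together with $z\mapsto -z$ gives the $z$-parity, and together these yield the homogeneity.
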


\subsection{Generating character varieties of two-bridge knots}
Chesebro is able to systematically describe the character varieties of 2-bridge knots in terms of the Farey recursive functions above.

\begin{lemma}\label{lem:psi}
Let $\alpha\in \QQ$ correspond to a two-bridge knot group $\Gamma_\alpha$. Then $\chi=(x_0,y_0)\in \mathbb C^2$ is the character of an irreducible representation if and only if $(y-2)(2+y-x^2)\neq 0$ and $\chi$ satisfies $\cT_0(\alpha)=0$ after replacing $w^2$ by $2+y-x^2$ and $z^2$ by $2-y$.
\end{lemma}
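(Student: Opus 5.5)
The statement is Lemma~\ref{lem:psi}, which recasts Chesebro's description of irreducible characters in the coordinates $x=\chi_\rho(a)$ and $y=\chi_\rho(ab^{-1})$. I would derive it from Chesebro's results \cite{Chesebro,farey-recursion} rather than from scratch. The plan has two steps: first identify the correct specialization of $w$ and $z$, then check that the excluded locus is exactly the reducible one together with the degenerate points.

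For the first step I would work with the standard Riley normal form of a non-abelian representation of $\Gamma_\alpha=\langle a,b\mid wa=bw\rangle$:
\[ \rho(a)=\bmat{m}{1}{0}{m^{-1}}, \qquad \rho(b)=\bmat{m}{0}{-u}{m^{-1}}. \]
Here $x=m+m^{-1}$, and $y=\tr(\rho(a)\rho(b)^{-1})=2+u$.
\begin{itemize}
\item In Chesebro's setup, $\cT(\beta)$ for $\beta\in\hQ$ is, up to normalization, an entry (or a trace of a suitable product) of the matrix attached to the rational tangle word for $\beta$.
\item The initial values $\cT(1/0)=0$, $\cT(0)=w$, $\cT(1)=z$ are matched with the traces of $\rho(a)-\rho(a)^{-1}$-type expressions. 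Concretely, $w^2$ and $z^2$ correspond to $\tr^2$ quantities of the form $4-\tr(ab)^2$ and $4-\tr(ab^{-1})^2$, suitably rescaled.
\item Using $\tr(ab)+\tr(ab^{-1})=x^2$, i.e.\ $\tr(ab)=x^2-y$, we get $2-y$ and $2+y-x^2=2-\tr(ab)$.
\item This gives the specialization $w^2\mapsto 2+y-x^2$ and $z^2\mapsto 2-y$. It is legitimate because, by Lemma~\ref{lem:T0}, $\cT_0(\alpha)$ depends only on $w^2$ and $z^2$.
\end{itemize}

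For the second step I would invoke Chesebro's theorem that the Riley polynomial of $\Gamma_\alpha$ equals $\cT_0(\alpha)$ under this substitution, up to a unit. The argument would go as follows.
\begin{itemize}
\item The Farey recursion \eqref{eq:frf 2} mirrors the trace identity $\tr(AB^2)=\tr(AB)\tr(B)-\tr(A)$, applied along the Farey path from $1/0,0$ to $\alpha$ given by Lemma~\ref{lem:farey_truncs}.
\item Dividing by $f(\alpha)$ removes exactly the trivial factors $w$ and $z$ that appear because of parity.
\end{itemize}

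Finally I would treat the excluded locus. A character is reducible iff $\tr\rho([a,b])=2$. With $\tr[a,b]=x^2+y^2+(x^2-y)^2-x^2(x^2-y)-2$, one checks that $\tr[a,b]-2=(y-2)(2+y-x^2)$, so the condition $(y-2)(2+y-x^2)\neq0$ is precisely irreducibility. The factor $2+y-x^2$ must also be excluded separately, since on it $w=0$ and the division by $f$ is degenerate.

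The main obstacle is the bookkeeping in the identification of $\cT$ with the Riley polynomial: the signs $\eta_j$ and the normalization by $f(\alpha)$ must produce exactly $\cT_0$ and not merely a multiple of it. I would simply cite \cite[Lemma 5.5 and its surrounding theorem]{Chesebro} for this, verifying it on the trefoil $\alpha=1/3$. There $\cT_0(1/3)$ should give $x^2-y-1$ up to a unit, consistent with Example~\ref{rem:trefoil}.
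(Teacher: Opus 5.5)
Your plan is essentially the paper's proof: the lemma is Chesebro's Theorem 7.2 rewritten via the change of variables $x^2-2=2-w^2-z^2$, $y=2-z^2$ from \cite[\S 7.2]{Chesebro}, i.e.\ $w^2=2-\tr(ab)=2+y-x^2$ and $z^2=2-\tr(ab^{-1})=2-y$, which is the substitution you reach. Note that these are linear in the traces, not ``$4-\tr^2$ suitably rescaled'', and the result to cite is Theorem 7.2, not Lemma 5.5.

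There are a few minor slips. The Fricke identity should read $\tr[a,b]=2x^2+(x^2-y)^2-x^2(x^2-y)-2$. This does give $\tr[a,b]-2=(y-2)(2+y-x^2)=-w^2z^2$, so nothing needs to be excluded ``separately''. Also, the trefoil check only tests the $w^2$ substitution, because $\cT_0(1/3)=w^2-1$ does not involve $z$. By contrast, $\cT_0(3/5)=w^2z^2-w^2-1$ recovers the figure-eight Riley polynomial and tests both substitutions.
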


\begin{proof}
Our choice of variables $x,y$ for $X(\Gamma_\alpha)$ is different from that of \cite{Chesebro}. Instead, our $x$ matches the $x$ in \cite{MPvL}, and our $y$ matches the $r$ in \cite{MPvL}.
To correctly translate the change of variable, we refer to \cite[\S 7.2]{Chesebro} to get the substitutions $ x^2-2 = 2-w^2-z^2$ and $y=2-z^2$. This yields
\[ w^2=2+y-x^2 \text{ and } z^2=2-y . \]
The statement now follows from \cite[Theorem 7.2]{Chesebro}.
\end{proof}

%% file: Vanishing.tex
\section{Vanishing polynomials for intersection points}
\label{sec:vanishing}

In this section we use the general setup from \Cref{subsec:farey} to give a description of the vanishing polynomial $\phi_\alpha$ defined in \Cref{def:phi}. The section concludes with a proof of \Cref{thm:A}.

\subsection{Conditions on the continued fraction}
\label{subsec:conditionsonalpha}

Recall that $\alpha_j$ is obtained by concatenating the continued fraction for $\alpha$ after the $j$-th entry.

\begin{lemma}\label{lem:cons}
Consider $\alpha =[3\sigma_1,2\zeta_1,\dots,3\sigma_n,2\zeta_n,3\sigma_{n+1}] $ with $\sigma_i,\zeta_j\in \ZZ_{\ne 0}$.
If $j$ is odd, then \[\cT(\alpha_j)(1,z)=\cT_0(\alpha_j)(1,z)=0.\]
If $j$ is even, then the numerator of $\alpha_j$ is even, and 
\[\cT(\alpha_j)(1,z)=\cT_0(\alpha_j)(1,z)=(-1)^{\sum_{i=1}^{j/2}\zeta_i}.\]
\end{lemma}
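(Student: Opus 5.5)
Throughout, $\alpha_j$ denotes the truncation $[a_1,\dots,a_j]$, with $a_{2i-1}=3\sigma_i$ and $a_{2i}=2\zeta_i$. The plan is a simultaneous induction on $j$, using \Cref{lem:farey_truncs} together with the Farey recursion \eqref{eq:frf 2}, \eqref{eq:frf-2} specialized to $w=1$.

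The first step is to iterate the defining recursion to get a general formula for $\cT(\beta\oplus^k\alpha)$ when $\{\alpha,\beta\}$ is a Farey pair. Setting $u_k=\cT(\beta\oplus^k\alpha)$, the recursion gives the Chebyshev-type relation $u_{k+1}=\cT(\alpha)u_k-u_{k-1}$ for all $k\in\ZZ$. Hence
\[
\cT(\beta\oplus^k\alpha)=S_{k-1}(t)\,\cT(\beta\oplus\alpha)-S_{k-2}(t)\,\cT(\beta),
\]
where $t=\cT(\alpha)$ and the $S_k$ are Chebyshev polynomials of the second kind. We will only need the special values $t=0$ and $t=\pm1$, and both have simple periodic behaviour:
\begin{itemize}
\item When $\cT(\alpha)=0$, the sequence $u_k$ has period $4$, and $u_k=(-1)^{k/2}u_0$ for even $k$.
\item When $\cT(\alpha)=\pm1$, $u_k$ has period $6$ or $3$ (up to sign), and $u_{3m}$ is determined by $u_0$ up to an explicit sign.
\end{itemize}

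The second step is the inductive claim, carried out at $w=1$: $\cT(\alpha_j)(1,z)=0$ for odd $j$, and $\cT(\alpha_j)(1,z)=(-1)^{\sum_{i\le j/2}\zeta_i}$ for even $j$. The base cases are $\cT(\alpha_{-1})=0$ and $\cT(\alpha_0)=w=1$. By \Cref{lem:farey_truncs}, $\alpha_j=\alpha_{j-2}\oplus^{\pm a_j}\alpha_{j-1}$.
\begin{itemize}
\item For even $j$: $\cT(\alpha_{j-1})=0$ and $a_j=2\zeta_i$ is even, so the period-4 behaviour gives $\cT(\alpha_j)=(-1)^{\zeta_i}\cT(\alpha_{j-2})$. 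The sign $\pm$ on the exponent does not matter, since $(-1)^{-\zeta_i}=(-1)^{\zeta_i}$.
\item For odd $j$: $\cT(\alpha_{j-1})=\pm1$ and $a_j=3\sigma_i$ is a multiple of $3$. Then $u_{3m}=(\mp1)^m u_0$ up to a sign, and with $u_0=\cT(\alpha_{j-2})=0$ we get $\cT(\alpha_j)=0$.
\end{itemize}
Neither case requires tracking $z$. The extra coefficient $u_1$ is killed by the periodicity, because $S_{k-1}(t)$ vanishes at the relevant $k$: $S_{2m-1}(0)=0$, and $S_{3m-1}(\pm1)=0$.

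The third step is the parity claim. The numerators $q_j$ satisfy $q_j=q_{j-2}\pm a_jq_{j-1}$. For even $j$, $a_j$ is even, so $q_j\equiv q_{j-2}$ modulo $2$, and inductively $q_j\equiv q_0=0$. So $q_j$ is even, hence $f(\alpha_j)=w$, and $\cT_0=\cT$ at $w=1$.

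For odd $j$, $f(\alpha_j)\in\{z,wz\}$. We need $\cT_0(\alpha_j)(1,z)=0$ as a polynomial, and not merely that $z\,\cT_0$ vanishes. This holds since $\cT(\alpha_j)(1,z)=z\cdot\cT_0(\alpha_j)(1,z)$ and $\ZZ[z]$ is a domain.

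The main obstacle is the sign bookkeeping in the Chebyshev evaluation for odd $j$, where $\cT(\alpha_{j-1})=\pm1$. There I will directly verify that $S_{3m-1}(\pm1)=0$, which kills the $u_1$ contribution regardless of the orientation sign $\eta_{j-2}\eta_j$, and that $S_{3m-2}(\pm1)$ is $\pm1$, which then multiplies $\cT(\alpha_{j-2})=0$.
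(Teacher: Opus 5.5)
Your proof is correct. For even $j$ it is essentially the paper's argument: the paper also writes $\alpha_j=\alpha_{j-2}\oplus^{\pm 2\zeta}\alpha_{j-1}$, discards the term multiplied by $\cT(\alpha_{j-1})(1,z)=0$, and inducts on $|\zeta|$ to pick up the factor $(-1)^{\zeta}$. That is exactly your observation that $S_{2m-1}(0)=0$ and $S_{2m-2}(0)=(-1)^{m-1}$, and the parity argument for $q_j$ is the same.

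The genuine difference is the odd case. The paper does not run a recursion there. It notes that each odd truncation $\alpha_j$ is again of Ohtsuki--Riley--Sakuma form, so $\Gamma_{\alpha_j}$ surjects onto the trefoil group. It then quotes Chesebro's Corollary 7.6, that $\cT(1/3)=z(w^2-1)$ divides $\cT(\alpha_j)$, and sets $w=1$. You instead make the induction simultaneous in $j$ and get the vanishing internally. Since $\cT(\alpha_{j-1})(1,z)=\pm1$, the specialized sequence satisfies $u_{k+3}=\mp u_k$, so $u_{\pm3\sigma}=\pm u_0=\pm\cT(\alpha_{j-2})(1,z)=0$.

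What your route buys is self-containment: there is no external divisibility theorem and no appeal to the epimorphism for the intermediate truncations. It also explains why the multipliers are $3$ and $2$: they are the half-periods of the Chebyshev recursion at $t=\pm1$ and at $t=0$. Combined with $\cT_0\in\ZZ[w^2,z^2]$, your argument recovers by pure Farey combinatorics the divisibility $(w^2-1)\mid\cT_0(\alpha_j)$ that the paper imports. The paper's route is shorter and ties the vanishing conceptually to the epimorphism, at the cost of relying on cited results.

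One small point: your claim $f(\alpha_j)\in\{z,wz\}$ for odd $j$ needs $q_j$ odd. This follows from the same parity recursion, since $q_j\equiv q_{j-2}$ when $q_{j-1}$ is even. You do not actually need it, though: in every case $f(\alpha_j)(1,z)\in\{1,z\}$ is nonzero in the domain $\ZZ[z]$.
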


\begin{proof} 
When $j$ is odd, the continued fraction expansion for $\alpha$ has form as in \Cref{prop:ORS}, so there is an epimorphism $\Gamma_{\alpha_j}\to \Gamma_{1/3}$. By \cite[Corollary 7.6]{Chesebro}, $\cT(1/3)=z(w^2-1)$ divides $\cT(\alpha_j)$. Setting $w=1$ yields $\cT(1/3)(1,z)=0$ so also $\cT(\alpha_j)(1,z)=0$ and $\cT_0(\alpha_j)(1,z)=0$.

Note first that $\alpha_2=(0\oplus^{2\zeta_1-2}\alpha_1)\oplus^2\alpha_1$ so then
\[ \cT(\alpha_2)=-\cT(0\oplus^{2\zeta_1-2}\alpha_1)+\cT(\alpha_1)\cT( (0\oplus^{2\zeta_1-2}\alpha_1)\oplus\alpha_1). \]
But the second term is 0 when $w=1$ because $\cT(\alpha_1)(1,z)=0$.

We now consider induction when $j$ is even. Note first that $\alpha_2=(0\oplus^{2\zeta_1-2}\alpha_1)\oplus^2\alpha_1$, so we use $\alpha_0=0$ as a stand-in base case. Clearly $0=0/1$ has even numerator and $\cT(0)(1,z)=1$.
By \Cref{lem:farey_truncs}, $\alpha_{j+2}=\alpha_j\oplus^{\pm 2\zeta_{1+j/2}}\alpha_{j+1}$. If the numerator of $\alpha_j$ is even, then the numerator of $\alpha_{j+2}$ is also even.
Using the Farey recursive function definition, we have that
\[ \cT(\alpha_{j+2})=-\cT(\alpha_j\oplus^{\pm |2\zeta_{1+j/2}-2|}\alpha_{j+1})+\cT(\alpha_{j+1})\cT(\alpha_j\oplus^{\pm |2\zeta_{1+j/2}-1|}\alpha_{j+1}).\]
However, we can ignore the second term since $j+1$ is odd so by above, $\cT(\alpha_{j+1})(1,z)=0$.
Now induction on $\zeta_1$ shows 
\[\cT(\alpha_{j+2})(1,z)=(-1)^{\zeta_{1+j/2}}\cT(\alpha_j)(1,z)=(-1)^{\sum_{i=1}^{1+j/2}\zeta_i}.\]
Finally, by the definition in \Cref{eq:T0def} with an even numerator, 
\[ \cT_0(\alpha_{j+2})=\cT(\alpha_{j+2})/w \]
so setting $w=1$ completes the proof. 
\end{proof}

We can now prove the condition that guarantees $\alpha$ corresponds to a knot and not a 2-component link.

\begin{lemma}\label{lem:epsumodd}
The continued fraction expansion \[\alpha=[3\sigma_1,2\zeta_1,\dots,3\sigma_n,2\zeta_n,3\sigma_{n+1}]\] with $\sigma_i,\zeta_j\in \ZZ_{\ne 0}$ has odd denominator if and only if $\sum_{i=1}^{n+1}\sigma_i$ is odd.
\end{lemma}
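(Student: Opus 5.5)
We track the parities of numerators and denominators of the convergents $\alpha_j=q_j/p_j$ modulo $2$, using the matrix form of the continued fraction. Modulo $2$, the matrices $\bmat{0}{1}{1}{a}$ depend only on $a \bmod 2$. We have $2\zeta_i\equiv 0$ and $3\sigma_i\equiv \sigma_i \pmod 2$. Writing $A=\bmat{0}{1}{1}{1}$, $I'=\bmat{0}{1}{1}{0}$ and $B_\sigma=\bmat{0}{1}{1}{\sigma}$, the sign $\eta_m$ is irrelevant mod $2$, and
\[ \bmatcol{q}{p}\equiv B_{\sigma_1} I' B_{\sigma_2} I' \cdots I' B_{\sigma_{n+1}}\bmatcol{0}{1} \pmod 2. \]
Since $B_\sigma = I'\bmat{1}{\sigma}{0}{1}$, each product $I'B_{\sigma_{i}}$ equals the upper unipotent matrix $\bmat{1}{\sigma_i}{0}{1}$. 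Hence the whole product telescopes:
\[ B_{\sigma_1} I' B_{\sigma_2}\cdots I'B_{\sigma_{n+1}} = I'\bmat{1}{\sigma_1}{0}{1}\bmat{1}{\sigma_2}{0}{1}\cdots\bmat{1}{\sigma_{n+1}}{0}{1} = I'\bmat{1}{S}{0}{1}, \]
where $S=\sum_{i=1}^{n+1}\sigma_i$. Applying this to $\bmatcol{0}{1}$ gives $I'\bmatcol{S}{1}=\bmatcol{1}{S}$. Therefore $q\equiv 1$ and $p\equiv S \pmod 2$.

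It follows that the denominator $p$ is odd exactly when $S$ is odd, which is the claim. The reduction of the fraction is automatic, since $\det$ of the product is $\pm1$, so $\gcd(q,p)=1$ and the parity of $p$ is that of the reduced denominator.

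As an alternative consistent with \Cref{lem:cons}, one could instead induct using the recursion of \Cref{lem:farey_truncs}, tracking the parity pattern of $(q_{j-1},p_{j-1}),(q_j,p_j)$. Each even entry $2\zeta$ fixes $\alpha_{j}\equiv\alpha_{j-2}$ mod $2$, and each odd-multiplicity entry $3\sigma$ adds $\sigma$ copies of $\alpha_{j-1}$. The matrix computation above is cleaner, so I would use it.

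The only step needing care is the bookkeeping identity $I'B_\sigma=\bmat{1}{\sigma}{0}{1}$, which makes the telescoping work. I also need to confirm that the signs $\eta_j$ and the convention $p\ge0$ do not affect parity, which is immediate.
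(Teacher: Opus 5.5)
Your proof is correct, and it takes a different route from the paper's.

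The paper inducts on $n$. By \Cref{lem:farey_truncs} it writes $\alpha=\alpha_{2n-1}\oplus^{\pm 3\sigma_{n+1}}\alpha_{2n}$, so $p\equiv p_{2n-1}+\sigma_{n+1}p_{2n}\pmod 2$. It takes from \Cref{lem:cons} that the even-index convergent $\alpha_{2n}$ has even numerator, hence odd denominator by coprimality. The induction hypothesis $p_{2n-1}\equiv\sum_{i=1}^{n}\sigma_i$ then finishes. This is the step-by-step bookkeeping you sketch as your alternative.

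Your main argument instead reduces the whole defining matrix product modulo $2$ at once. The factorization $\bmat{0}{1}{1}{\sigma}=\bmat{0}{1}{1}{0}\bmat{1}{\sigma}{0}{1}$ collapses it to $\bmat{0}{1}{1}{0}\bmat{1}{S}{0}{1}$. This is non-inductive and needs nothing from \Cref{lem:cons}. It also gives the full parity picture: the same telescoping on truncations yields $(q_{2k},p_{2k})\equiv(0,1)$ and $(q_{2k-1},p_{2k-1})\equiv(1,\sum_{i\le k}\sigma_i)\pmod 2$. These are exactly the parity facts the paper uses elsewhere: the numerator claim in \Cref{lem:cons}, and $p_{m-1}\equiv\sum_{i=1}^n\sigma_i$ in the proof of \Cref{computation_of_w}. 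The paper's route has the merit of staying in the Farey-recursion language used throughout \Cref{sec:vanishing}.

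Your handling of the sign $\eta$ and of reducedness via $\det=\pm1$ is right. The matrix $A=\bmat{0}{1}{1}{1}$ you introduce is never used and can be dropped.
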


\begin{proof}
Note first that when $n=0$, $[3\sigma_1]=\frac{1}{3\sigma_1}$ has odd denominator whenever $\sigma_1$ is odd. We proceed by induction on $n$ and assume $[3\sigma_1,2c_1,\dots, 3\sigma_{n}]$ has odd denominator if and only if $\sum_{i=1}^n\sigma_i$ is odd. By \Cref{lem:farey_truncs},
\[
\alpha=[3\sigma_1,2c_1,\dots, 3\sigma_{n}]\oplus^{\pm 3\sigma_{n+1}}[3\sigma_1,2c_1,\dots, 2c_n].
\]
By \Cref{lem:cons}, the continued fraction expansion $[3\sigma_1,2c_1,\dots, 2c_n]$ has even numerator, so its denominator is odd. Therefore the denominator of $\alpha$ will have the same parity as the denominator of $[3\sigma_1,2c_1,\dots, 3\sigma_{n}]$ plus $3\sigma_{n+1}$, which is odd if and only if $\sum_{i=1}^{n+1}\sigma_i$ is odd.
\end{proof}

For the remainder of the paper, we fix
\[\alpha = \alpha_{2n+1}=[3\sigma_1,2\zeta_1,\dots,3\sigma_n,2\zeta_n,3\sigma_{n+1}]=[a_1,\dots,a_{2n+1}] \]
 with $\sigma_i,\zeta_j\in \ZZ_{\ne 0}$ and $\sum_{i=1}^{n+1}\sigma_i$ as in \Cref{prop:ORS} .

\subsection{Description of the polynomials}

\begin{lemma} \label{lem:Z[z^2]}
    Let $\gamma$ be a continued fraction expansion of form \[ \gamma =[3\sigma_1,2\zeta_1,\dots,3\sigma_k].\]
    The quantity
    \[ \frac{\cT(\gamma)}{z(w^2-1)} \]
    is in $\ZZ[w^2,z^2]$.
    Moreover, under the substitutions $w^2=1$ and $z^2=x^2-3$, this polynomial agrees with the vanishing polynomial $\phi_\gamma(x)$ from \Cref{def:phi}.
\end{lemma}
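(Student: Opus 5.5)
We prove the two claims separately: first the divisibility, then the comparison with $\phi_\gamma$.

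\emph{Divisibility.} Since $\gamma=[3\sigma_1,2\zeta_1,\dots,3\sigma_k]$ has the form of \Cref{prop:ORS}, $\Gamma_\gamma$ surjects onto $\Gamma_{1/3}$. By \cite[Corollary 7.6]{Chesebro}, exactly as in the proof of \Cref{lem:cons}, $\cT(1/3)=z(w^2-1)$ divides $\cT(\gamma)$ in $\ZZ[w,z]$.

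\emph{Parity.} To see that the quotient lies in $\ZZ[w^2,z^2]$, we first determine the parities of the numerator and denominator of $\gamma=q/p$.
\begin{itemize}
\item[] The denominator $p$ is odd when $\sum\sigma_i$ is odd, by \Cref{lem:epsumodd}.
\item[] For the numerator, we induct using \Cref{lem:farey_truncs}: $\gamma=\gamma_{j-2}\oplus^{\pm3\sigma_k}\gamma_{j-1}$, where $\gamma_{j-1}$ has even numerator by \Cref{lem:cons}. Hence the numerator of $\gamma$ has the parity of the numerator of $\gamma_{j-2}$, and the base case $1/(3\sigma_1)$ shows it is odd.
\end{itemize}
So in the knot case $p,q$ are odd, $f(\gamma)=z$, and $\cT(\gamma)=z\,\cT_0(\gamma)$ with $\cT_0(\gamma)\in\ZZ[w^2,z^2]$ by \Cref{lem:T0}. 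The lemma is also stated for link-type $\gamma$ with even $p$; there $q$ is odd, $f=wz$, and $\cT=wz\,\cT_0$. This case is less clean, and we would need to confirm that the extra factor $w$ is absorbed, so we restrict attention to the knot case fixed in the paper.

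\emph{Dividing out $w^2-1$.} Given $\cT(\gamma)=z\,\cT_0(\gamma)$, it suffices to show that $w^2-1$ divides $\cT_0(\gamma)$ in $\ZZ[w^2,z^2]$. Write $\cT_0(\gamma)=P(W,Z)$ with $W=w^2$ and $Z=z^2$. From the divisibility above, $(w^2-1)$ divides $P(w^2,z^2)$ in $\ZZ[w,z]$. Since $W-1$ is monic in $W$, we divide $P$ by $W-1$ in $\ZZ[Z][W]$, leaving a remainder $r(Z)$. Then $r(z^2)$ is divisible by $w^2-1$ in $\ZZ[w,z]$, which forces $r=0$. Thus $\cT(\gamma)/(z(w^2-1))=P/(W-1)\in\ZZ[w^2,z^2]$.

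\emph{Comparison with $\phi_\gamma$.} By \Cref{lem:psi}, the irreducible characters are cut out by $\cT_0(\gamma)$ after substituting $w^2=2+y-x^2$ and $z^2=2-y$. On the trefoil component $y=x^2-1$, these substitutions become $w^2=1$ and $z^2=3-x^2$. The statement says $z^2=x^2-3$, so the sign convention must be checked carefully; a global sign or the substitution $z^2\mapsto -z^2$ should account for it. The restriction along the curve is taken with the factor $W-1=1+y-x^2$, i.e.\ the defining factor of $X_{tre}$ up to sign, removed.

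Next we identify $\psi_\gamma$ with $(y-2)\cdot\cT_0(\gamma)|_{\mathrm{subst}}$, up to a unit and after making this square-free. Then
\[
\psi_\gamma(x,x^2-1)=(x^2-3)\cdot\Bigl(\tfrac{P}{W-1}\Bigr)\Big|_{W=1,\;Z=\pm(x^2-3)}\cdot(\text{unit}).
\]
To justify this, write $P=(W-1)Q$; then $P(2+y-x^2,\,2-y)=(1+y-x^2)\,Q(\cdot)$. We would then use L'Hôpital or the factorization directly to show that dividing $\psi$ by the factor $x^2-y-1$ and restricting to $y=x^2-1$ yields $Q(1,\,3-x^2)$. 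Here the $x^2-3$ in \Cref{def:phi} is the reducible factor $y-2$ evaluated at $y=x^2-1$.

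\emph{Main obstacle.} The main obstacle is precisely this bookkeeping. We must make sure that $\psi_\gamma$, which is square-free, contains $(x^2-y-1)$ exactly once, and that $\phi_\gamma$ equals $Q$ restricted to the curve rather than a multiple of it. We also need to resolve the sign of the $z^2$ substitution. For the first point we would argue that if $(W-1)^2\mid P$, then square-freeness of $\psi$ forces a convention change; we would handle this by interpreting $\psi$ as the product of $(y-2)$ and the substituted $\cT_0$, and checking with the trefoil base case $\gamma=1/3$, where $Q=1$.
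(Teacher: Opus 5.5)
Your argument for the first claim is correct when the denominator is odd, and it takes a slightly different, more explicit route than the paper. The paper gets $(w^2-1)\mid\cT_0(\gamma)$ geometrically: by \Cref{lem:psi}, $\cT_0(\gamma)$ vanishes along $X_{tre}(\Gamma_\gamma)$, where $w^2=2+y-x^2=1$. It then asserts that the numerator and denominator are odd and cites \Cref{lem:T0}. You instead take $z(w^2-1)\mid\cT(\gamma)$ from Chesebro's Corollary 7.6, as in \Cref{lem:cons}, and you actually check the parities. You also supply the descent from $\ZZ[w,z]$ to $\ZZ[w^2,z^2]$ by dividing by the monic $W-1$, a step the paper leaves implicit.

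Your hesitation about even denominators is warranted, but the factor $w$ is \emph{not} absorbed. For $\gamma=[6]=1/6$ the Farey recursion gives $\cT(1/6)=wz(w^2-1)(w^2-3)$, so the quotient is $w(w^2-3)\notin\ZZ[w^2,z^2]$. What holds in general is that the quotient lies in $\ZZ[w^2,z^2]$ for odd denominator and in $w\,\ZZ[w^2,z^2]$ for even denominator. The truncations $\alpha_{2k-1}$ used in the proof of \Cref{thm:A} can be links, but only their values at $w=1$ are used, so this weaker form is all the paper needs.

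The genuine gap is the identification with $\phi_\gamma$: there your proposal stops at a list of obstacles. Most of it closes in two lines, with no limiting argument. The map $W\mapsto 2+y-x^2$, $Z\mapsto 2-y$ is a ring isomorphism $\ZZ[W,Z]\to\ZZ[x^2,y]$. So $P=(W-1)Q$ becomes the exact identity $P(2+y-x^2,2-y)=-(x^2-y-1)\,Q(2+y-x^2,2-y)$.

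Read \Cref{def:phi} as you do. Literally $\psi_\gamma(x,x^2-1)\equiv 0$, so one must divide by $x^2-y-1$ before restricting, and then $x^2-3$ is the restriction of $y-2$. Granting $\psi_\gamma=\pm(y-2)\,P(2+y-x^2,2-y)$, you get $\phi_\gamma(x)=\pm Q(1,3-x^2)$. That input requires $P$ to be square-free with no factor $W$ or $Z$, which also settles the multiplicity of $x^2-y-1$. It is proved neither in your proposal nor in the paper, whose proof simply asserts the identification.

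Your sign worry is correct, and you should state it as such rather than explain it away. With the substitutions of \Cref{lem:psi}, $z^2=2-y$ equals $3-x^2$ on $y=x^2-1$. So the $z^2=x^2-3$ in the statement, and the paper's claim that it ``corresponds to'' $y=x^2-1$, does not follow from the paper's own conventions. No global sign repairs this, since $Q(1,Z)$ and $Q(1,-Z)$ are not associates in general. The slip is harmless downstream: the proof of \Cref{thm:A} uses only the degree of $\phi_\alpha$ and its reduction mod $2$, and both are unchanged under $z^2\mapsto -z^2$.
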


\begin{proof}
    Note first that $\gamma$ has ORS form as in \Cref{prop:ORS}, so $X_{tre}(\Gamma_\gamma)$ is an irreducible component of $X(\Gamma_\gamma)$. On $X_{tre}(\Gamma_\gamma)$ we have $y=x^2-1$, so the substitution in \Cref{lem:psi} gives $w^2=2+y-x^2=1$. Therefore $(w^2-1)$ divides $\cT_0(\gamma)$.
    Since $\gamma$ defines a rational number with odd numerator and odd denominator, we have $\cT_0(\gamma)=\cT(\gamma)/z$ by definition. So $z(w^2-1)$ divides $\cT(\gamma)$. The fact that the result lies in $\ZZ[w^2,z^2]$ follows from \Cref{lem:T0}. The identification with $\phi_\gamma$ follows by setting $z^2=x^2-3$, which corresponds to substitution $y=x^2-1$ in \Cref{def:phi}.
\end{proof}

\begin{lemma} \label{lem:Tmu}
    For $\alpha$ as in \Cref{prop:ORS} with $m=2n$, we have 
    \begin{enumerate}
        \item $ \cT(\alpha_{m-1})(1,z)=0 $
        \item $ \cT(\alpha_{m-1}\oplus^{\pm1} \alpha_m)(1,z)=(-1)^{\tau}z $ for some $\tau\in\{0,1\}$ 
        \item for all $\ell\geq 1$,
        $\cT(\alpha_{m-1}\oplus^{\pm (3\ell-2)}\alpha_m)(1,z) = (-1)^{\tau+(\ell-1)\left( 1+\sum_{i=1}^n \zeta_i \right)}z $
        and 
        $\cT(\alpha_{m-1}\oplus^{\pm (3\ell-3)}\alpha_m)(1,z) = 0 $
    \end{enumerate}
\end{lemma}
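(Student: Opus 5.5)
Part (1) is immediate from \Cref{lem:cons}: $m-1=2n-1$ is odd, so $\cT(\alpha_{m-1})(1,z)=0$. For the rest, I would work entirely with the specialization $w=1$. I would also use that $\{\alpha_{m-1},\alpha_m\}$ is a Farey pair by \Cref{lem:farey_truncs}, so every $\beta_k:=\alpha_{m-1}\oplus^{k}\alpha_m$ forms a Farey pair with $\alpha_m$. The recursion \eqref{eq:frf 2} applied to the pair $(\alpha_m,\beta_k)$ gives $\beta_{k+2}=\beta_k\oplus^2\alpha_m$. Hence
\[
\cT(\beta_{k+2}) = -\cT(\beta_k)+\cT(\alpha_m)\cT(\beta_{k+1}).
\]
Using \eqref{eq:frf-2}, the same relation holds for negative indices. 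So $k\mapsto \cT(\beta_k)(1,z)$ is a linear recurrence with characteristic coefficient $c:=\cT(\alpha_m)(1,z)$. By \Cref{lem:cons}, $c=(-1)^{Z}$ with $Z=\sum_{i=1}^n\zeta_i$.

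For part (2), I would first treat $k=\pm1$. Since $\beta_0=\alpha_{m-1}$, part (1) gives $\cT(\beta_0)(1,z)=0$. The key input is a value at a neighbouring index. I would use $\beta_{\pm 3}$ in the appropriate sign direction, which by \Cref{lem:farey_truncs} is an ORS-type fraction $[a_1,\dots,a_{m-1},2\zeta_n,\pm 3]$ or a close relative. More cleanly, I would compute the Farey pair $(\alpha_{m-1},\alpha_m)$ inductively from $(0,1/0)$. The values at the start are $\cT(1/0)=0$, $\cT(0)(1,z)=1$, and $\cT(1)=z$. I would show by induction along the continued fraction that the "odd" neighbour $\alpha_{m-1}\oplus^{\pm1}\alpha_m$ always specializes to $\pm z$. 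This is the step I expect to be the main obstacle, since one has to keep track of which sign of $\oplus^{\pm1}$ is relevant and of the sign $\eta$ from \Cref{lem:farey_truncs}.

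My planned route for part (2) is as follows. The sequence $\cT(\gamma_k)(1,z)$ along $\gamma_k=\alpha_{j-1}\oplus^k\alpha_j$ satisfies the recurrence above. At the start, $j=1$, we have $\alpha_0=0$ and $\alpha_1=1/(3\sigma_1)$. Here $\cT(\alpha_0)=w\mapsto 1$, and the pair $(1/0,0)$ gives the sequence $\cT(1/0\oplus^k 0)$. Its values are $z$ for $k=1$ (since $1/0\oplus0=1$), then $0$, $-z$, and so on, with period $c=1$. For $c=\pm1$ the recurrence $u_{k+2}=-u_k+c\,u_{k+1}$ has period dividing $6$ and pattern $0,u,cu,0,-u,-cu$ up to shift. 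The consecutive entries $\cT(\alpha_{j-1})$, $\cT(\alpha_{j-1}\oplus\alpha_j)$ at the next level are two terms of the previous sequence at indices $a_j$ and $a_j\pm1$. Tracking residues mod $3$ of the $a_j$ (multiples of $3$ at odd $j$, even at even $j$) shows that the pair $(0,\pm z)$ propagates through each stage. This gives $\cT(\beta_{\pm1})(1,z)=(-1)^\tau z$.

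For part (3), I would solve the recurrence $u_{k+2}=-u_k+cu_{k+1}$ with $u_0=0$ and $u_{\pm1}=(-1)^\tau z$. Direct iteration gives $u_{\pm2}=c(-1)^\tau z$ and $u_{\pm3}=0$, then $u_{\pm4}=-c(-1)^\tau z\cdot c\cdot c=\dots$. In general $u_{\pm 3\ell}=0$ and $u_{\pm(3\ell+1)}=(-c)^{\ell}(-1)^\tau z$. Since $-c=(-1)^{1+Z}$, this gives $u_{\pm(3\ell-2)}=(-1)^{\tau+(\ell-1)(1+Z)}z$ and $u_{\pm(3\ell-3)}=0$, which is exactly the claim. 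The verification is a routine induction on $\ell$ using two steps of the recurrence.
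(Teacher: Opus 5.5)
Your proposal is correct and is essentially the paper's argument: part (2) goes by induction along the continued fraction, where the recurrence $u_{k+2}=-u_k+c\,u_{k+1}$ has coefficient $c=\cT(\alpha_j)(1,z)$ equal to $0$ or $\pm1$ according to the parity of $j$ (\Cref{lem:cons}) and the entries $3\sigma_i$, $2\zeta_i$ land on the right residues, and part (3) iterates that recurrence, exactly as the paper does with three consecutive Farey steps. Only cosmetic slips need fixing, and none affects the argument, which only uses that for $c=\pm1$ the terms vanish exactly at indices divisible by $3$ (and for $c=0$ alternate between $\pm1$ and $\pm z$): for $c=\pm1$ the period-six pattern is $0,u,cu,0,-cu,-u$, so your initial list for $c=1$ should read $0,z,z,0,-z,-z$; and the data handed to level $j$ are $\cT(\alpha_j)$ and $\cT(\alpha_{j-1}\oplus^{\pm1}\alpha_j)$, which sit at indices $\pm a_j$ and $\pm a_j\pm1$ of the previous sequence, while the new index-zero value is the old coefficient.
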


\begin{proof}
    (1) follows directly from \Cref{lem:cons} since $m-1$ is odd and $\alpha$ is as in \Cref{prop:ORS}.

    For (2), we induct on $n$.
    In the case that $n=0$, then $m=0$ so $\alpha_m=\alpha_0=0/1$ and $\alpha_{m-1}=\alpha_{-1}=1/0$. Then
    \[ \cT(\alpha_{-1}\oplus^{\pm 1}\alpha_0)(1,z)=\cT(\pm 1)(1,z)=\pm z. \]

    Now assume that $\cT(\alpha_{m-3}\oplus^{\pm1} \alpha_{m-2})(1,z)=(-1)^{\tau}z$ for some $\tau\in\{0,1\}$.
    By \Cref{lem:farey_truncs}, $\alpha_m=\alpha_{m-2}\oplus^{\pm 2\zeta_n}\alpha_{m-1}$ and $\alpha_{m-1}=\alpha_{m-3}\oplus^{\pm 3\sigma_n}\alpha_{m-2}$.
    By \Cref{lem:cons}, since $m-1$ is odd, $\cT(\alpha_{m-1})(1,z)=0$. Applying Farey recursion with the Farey pair $\{\alpha_{m-1},\alpha_{m-2}\}$ and using $\alpha_m=\alpha_{m-2}\oplus^{\pm 2\zeta_n}\alpha_{m-1}$, we get from \Cref{lem:farey_truncs} that
    \[ \cT(\alpha_{m-1}\oplus^{\pm1}\alpha_m)(1,z)= -\cT(\alpha_{m-2}\oplus^{\pm2|\zeta_n|-1}\alpha_{m-1})(1,z). \]
    Applying Farey recursion repeatedly and using $\cT(\alpha_{m-1})(1,z)=0$ to reduce by one term at each step, we get that this equals
    \[ \pm \cT (\alpha_{m-2}\oplus^{\pm1}\alpha_{m-1})(1,z). \]

    By \Cref{lem:cons}, since $m-2$ is even, $\cT(\alpha_{m-2})(1,z)=(-1)^{\sum_{i=1}^{n-1}\zeta_i}$. Applying Farey recursion with the Farey pair $\{\alpha_{m-2},\alpha_{m-3}\}$ and using $\alpha_{m-1}=\alpha_{m-3}\oplus^{\pm 3\zeta_n}\alpha_{m-2}$, we get from \Cref{lem:farey_truncs} that
    \[ \cT(\alpha_{m-2}\oplus^{\pm1}\alpha_{m-1})(1,z)= -\cT(\alpha_{m-3}\oplus^{\pm3|\sigma_n|-1}\alpha_{m-2})(1,z). \]
    Applying Farey recursion repeatedly, we have at each step
    \begin{multline*}
        \cT(\alpha_{m-3}\oplus^{\pm k}\alpha_{m-2})(1,z)
     = (-1)^{\sum_{i=1}^{n-1}\zeta_i} \cT(\alpha_{m-3}\oplus^{\pm (k-1)}\alpha_{m-2})(1,z) 
     \\ - \cT(\alpha_{m-3}\oplus^{\pm (k-2)}\alpha_{m-2})(1,z) .
    \end{multline*}
     Since $3|\sigma_n|$ is odd, after an even number of reductions, we get that this equals
     \[ \pm \cT (\alpha_{m-3}\oplus^{\pm1}\alpha_{m-2})(1,z) \]
     and by induction this equals $\pm z$.

     Finally, (3) also follows from induction on $\ell$. The case $\ell=1$ is exactly (1) and (2). Assume that (3) holds for $\ell-1$. 
     Applying the Farey recursion at $w-1$ three consecutive times, and using the inductive assumption together with \Cref{lem:cons}, gives
     \[ \cT(\alpha_{m-1}\oplus^{\pm (3\ell-3)}\alpha_m)(1,z)= 0 \]
     and 
     \[ \cT(\alpha_{m-1}\oplus^{\pm (3\ell-2)}\alpha_m)(1,z)= (-1)^{\tau+(\ell-1)\left( 1+\sum_{i=1}^n \zeta_i \right)}z . \qedhere \]
\end{proof}

\begin{proposition}\label{prop:recursion_epsilon}
Let $m=2n$. For $\alpha=\alpha_{m+1}$ as in \Cref{prop:ORS}, we have
\[ \frac{\cT(\alpha)}{z(w^2-1)}\Big|_{w=1} = \pm \frac{\cT(\alpha_{m-1})}{z(w^2-1)}\Big|_{w=1} \pm \sigma_{n+1} \frac{\cT(\alpha_m)^2-1}{w^2-1}(1,z) . \]
\end{proposition}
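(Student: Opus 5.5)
The plan is to unwind the last Farey step of $\alpha$ as a Chebyshev-type linear recursion, and then expand to first order in $w^2-1$.

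\textbf{Setting up the recursion.} By \Cref{lem:farey_truncs}, $\alpha=\alpha_{m+1}=\alpha_{m-1}\oplus^{\pm 3\sigma_{n+1}}\alpha_m$. Put $N=3|\sigma_{n+1}|$, $t=\cT(\alpha_m)$ and $F_k=\cT(\alpha_{m-1}\oplus^{\pm k}\alpha_m)$, with the sign fixed by \Cref{lem:farey_truncs}. Applying \eqref{eq:frf 2} (or \eqref{eq:frf-2} for the negative sign) to the Farey pair $\{\alpha_{m-1}\oplus^{\pm(k-1)}\alpha_m,\alpha_m\}$ gives $F_{k+1}=tF_k-F_{k-1}$. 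Let $S_k$ be the Chebyshev polynomials with $S_{-1}=0$, $S_0=1$ and $S_k=tS_{k-1}-S_{k-2}$. Induction on $N$ then gives
\[ \cT(\alpha)=F_N=S_{N-1}(t)\,F_1-S_{N-2}(t)\,F_0,\qquad F_0=\cT(\alpha_{m-1}). \]

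\textbf{Input facts.}
\begin{itemize}
\item[(a)] By \Cref{lem:cons}, the numerator of $\alpha_m$ is even and $t(1,z)=\pm1$. By \Cref{lem:T0}, $t\in w\,\ZZ[w^2,z^2]$, so $t^2\in\ZZ[w^2,z^2]$ and $u:=t^2-1$ vanishes at $w^2=1$. Hence $u$ is divisible by $w^2-1$, and $u/(w^2-1)=(\cT(\alpha_m)^2-1)/(w^2-1)$ is exactly the polynomial in the statement.
\item[(b)] By \Cref{lem:Z[z^2]} applied to $\gamma=\alpha_{m-1}$, $F_0$ is divisible by $z(w^2-1)$.
\item[(c)] By \Cref{lem:Tmu}(2), $F_1(1,z)=\pm z$.
\end{itemize}

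\textbf{Expanding in $u$.} Since $N$ is odd, $S_{N-1}$ is an even polynomial in $t$, hence a polynomial in $t^2=1+u$. Likewise $S_{N-2}(t)=t\cdot P(t^2)$. Write $S_{N-1}(t)=S_{N-1}(1)+c\,u+O(u^2)$, where $c=\frac{d S_{N-1}}{d(t^2)}\big|_{t^2=1}$, and note $O(u^2)$ is divisible by $(w^2-1)^2$.
\begin{itemize}
\item At $t=\pm1$ the sequence $S_k(\pm1)$ is periodic of period $6$, with $S_k(1)=1,1,0,-1,-1,0,\dots$ for $k=0,1,2,\dots$. Since $3\mid N$, we get $S_{N-1}(\pm1)=0$ and $S_{N-2}(\pm1)=\pm1$.
\item Therefore
\[ S_{N-1}(t)F_1=c\,u\,F_1+O\bigl((w^2-1)^2\bigr). \]
Dividing by $z(w^2-1)$ and setting $w=1$ gives $c\cdot(\pm1)\cdot u/(w^2-1)\big|_{w=1}$, using (c). The higher-order terms of $F_1$ in $w^2-1$ do not contribute, since they are multiplied by $u$.
\item For the other term, dividing $S_{N-2}(t)F_0$ by $z(w^2-1)$ and setting $w=1$ gives $\pm\,\cT(\alpha_{m-1})/(z(w^2-1))\big|_{w=1}$, using (b) and $S_{N-2}(t)|_{w=1}=\pm1$.
\end{itemize}
Adding the two contributions gives the claimed identity, provided that $c=\pm\sigma_{n+1}$.

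\textbf{Main obstacle: computing $c$.} This is the step I expect to be hardest; I would do it with the trigonometric form. Writing $t=2\cos\theta$, we have $S_k(t)=\sin((k+1)\theta)/\sin\theta$, and $t=1$ corresponds to $\theta=\pi/3$. Put $j=|\sigma_{n+1}|$, so $N=3j$. Then
\[ \frac{dS_{N-1}}{d\theta}\Big|_{\pi/3}=\frac{3j\cos(j\pi)}{\sin(\pi/3)}, \]
using $\sin(j\pi)=0$. Since $dt/d\theta=-2\sin\theta=-\sqrt3$, this gives $dS_{N-1}/dt=\mp2j$. Finally $d/d(t^2)=\frac1{2t}\,d/dt$, so $c=\mp j=\pm\sigma_{n+1}$. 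The case $t=-1$ follows from the parity $S_{N-1}(-t)=S_{N-1}(t)$.

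It remains to check that the signs arising from the orientation of the Farey sum (the $\pm$ in $\oplus^{\pm N}$) are absorbed into the unspecified $\pm$ of the statement. This holds because \eqref{eq:frf-2} yields the same recursion for negative Farey sums.
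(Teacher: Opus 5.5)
Your overall strategy works, but one step is false as stated: you write ``Since $N$ is odd''. Here $N=3|\sigma_{n+1}|$. The hypotheses (\Cref{prop:ORS} with \Cref{lem:epsumodd}) only force $\sum_{i=1}^{n+1}\sigma_i$ to be odd, not $\sigma_{n+1}$ itself. The paper's own example $[3,2,6]$ has $\sigma_2=2$, so $N=6$.

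When $N$ is even, $S_{N-1}$ is an odd polynomial in $t$ (for instance $S_5(t)=t(t^2-1)(t^2-3)$), so it is not a polynomial in $t^2=1+u$. Consequently several pieces of your argument break down:
\begin{itemize}
\item your expansion $S_{N-1}(t)=S_{N-1}(1)+c\,u+O(u^2)$;
\item the definition of $c$ as a derivative with respect to $t^2$;
\item your parity argument for the case $t(1,z)=-1$;
\item the claim that $S_{N-2}$ has the form $t\,P(t^2)$, since $S_{N-2}$ is then even.
\end{itemize}
As written, your argument only covers odd $\sigma_{n+1}$.

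The repair is short and makes parity irrelevant. Since $3\mid N$, you already have $S_{N-1}(1)=S_{N-1}(-1)=0$. So $S_{N-1}(t)=(t^2-1)R(t)$ with $R\in\ZZ[t]$, and
\[ \frac{S_{N-1}(t)\,F_1}{z(w^2-1)}=\frac{u}{w^2-1}\cdot R(t)\cdot\frac{F_1}{z}. \]
At $w=1$ we have $t=\pm1$ and $F_1/z=\pm1$, so this equals $\pm R(\pm1)\,\frac{u}{w^2-1}(1,z)$, where $R(1)=\tfrac12 S_{N-1}'(1)$ and $R(-1)=-\tfrac12 S_{N-1}'(-1)$. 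Your trigonometric computation gives $S_{N-1}'(1)=(-1)^{j+1}2j$. Differentiating $S_{N-1}(-t)=(-1)^{N-1}S_{N-1}(t)$ gives $S_{N-1}'(-1)=(-1)^{N}S_{N-1}'(1)$. Hence $R(\pm1)=\pm j=\pm\sigma_{n+1}$ for either parity of $N$. The $F_0$ term only uses $S_{N-2}(\pm1)=\pm1$, which needs no parity assumption.

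With this correction your proof is valid, and it takes a different route from the paper. The paper inducts on $\ell=|\sigma_{n+1}|$, peeling off three Farey steps at a time via $\cT(\alpha)=(t^2-1)F_{3\ell-2}-t\,F_{3\ell-3}$. At each stage it proves that $(w^2-1)^2$ divides $\cT(\alpha)\mp\cT(\alpha_{m-1})\mp z\ell\,(\cT(\alpha_m)^2-1)$, which uses the values at $w=1$ of all the intermediate terms, i.e.\ \Cref{lem:Tmu}(3). Your closed form $F_N=S_{N-1}(t)F_1-S_{N-2}(t)F_0$ needs only $F_0$ and $F_1$ at $w=1$ (\Cref{lem:Tmu}(1),(2)). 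It also gives the signs explicitly and exhibits the factor $\sigma_{n+1}$ as a Chebyshev derivative at $\theta=\pi/3$. The paper's induction is more elementary but hides where that linear factor comes from.
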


\begin{proof}
    It suffices to show that $(w^2-1)^2$ divides the expression
    \[ \cT(\alpha)\pm \cT(\alpha_{m-1})\pm z\sigma_{n+1}(\cT(\alpha_{m})^2-1)
    \]
    since by \Cref{lem:Z[z^2]}, dividing by $z(w^2-1)$ and setting $w=1$ gives the result.

    Note first that by \Cref{lem:farey_truncs}, $\alpha =\alpha_{m-1}\oplus^{\pm 3\sigma_{n+1}} \alpha_{m}$, so applying the Farey recursive function gives
    \begin{multline*}
        \cT(\alpha_{m+1}) 
        = (\cT(\alpha_m)^2-1) \cdot \cT(\alpha_{m-1}\oplus^{\pm (3\ell-2)}\alpha_m)
        \\ - \cT(\alpha_m)\cdot \cT(\alpha_{m-1}\oplus^{\pm (3\ell-3)}\alpha_m)
    \end{multline*}
    where $\ell:=|\sigma_{n+1}|$.
    
    Define
    \[ f_1(\ell)=\ell\left( 1+\sum_{i=1}^n \zeta_i \right) \quad \text{ and } \quad f_2(\ell)=\tau+(\ell-1)\left( 1+\sum_{i=1}^n \zeta_i \right) \]
    where $\tau\in\{0,1\}$ is as in \Cref{lem:Tmu}.

    We induct on $\ell$ to show that $(w^2-1)^2$ divides 
    \[ \cT(\alpha_{m+1}) - (-1)^{f_1(\ell)} \cT(\alpha_{m-1}) -(-1)^{f_2(\ell)} z \ell (\cT(\alpha_{m})^2-1)
    \]

    When $\ell=1$, the Farey recursion expansion gives
    \begin{multline*}
        \cT(\alpha_{m+1}) - (-1)^{f_1(1)} \cT(\alpha_{m-1}) - (-1)^{f_2(1)} z(\cT(\alpha_m)^2-1)
        \\ = \left( \cT(\alpha_{m-1}\oplus^{\pm 1}\alpha_m) - (-1)^\tau z \right) \cdot (\cT(\alpha_m)-1) 
        \\ - \cT(\alpha_{m-1}) \cdot \left( \cT(\alpha_m)+(-1)^{1+\sum\zeta_i} \right)
    \end{multline*} 
    By \Cref{lem:Tmu}, $\cT(\alpha_{m-1}\oplus^{\pm 1}\alpha_m) - (-1)^\tau z$ vanishes at $w=1$, so it is divisible by $(w^2-1)$.
    By \Cref{lem:cons}, both $\cT(\alpha_{m-1})(1,z)$ and $\cT(\alpha_m)+(-1)^{1+\sum\zeta_i}$ also vanish, and hence are divisible by $(w^2-1)$. So the entire expression is divisible by $(w^2-1)^2$. 

    Now assume the result holds $\ell-1$.
    Note that $\cT(\alpha_{m-1}\oplus^{\pm (3\ell-3)}\alpha_m)$ corresponds to the case $|\sigma_{n+1}|=\ell-1$. So modulo $(w^2-1)^2$,
    \begin{multline*}
        \cT(\alpha_{m-1}\oplus^{\pm (3\ell-3)}\alpha_m) 
        \\ \equiv (-1)^{f_1(\ell-1)} \cT(\alpha_{m-1}) + (-1)^{f_2(\ell-1)} z (\ell-1) (\cT(\alpha_{m})^2-1) .
    \end{multline*}  
    
    We have
    \begin{multline*}
        \cT(\alpha_{m+1})-(-1)^{f_1(\ell)}\cT(\alpha_{m-1})-(-1)^{f_2(\ell)}z\ell (\cT(\alpha_m)^2-1)
        \\ = (\cT(\alpha_m)^2-1)\cdot \left( \cT(\alpha_{m-1}\oplus^{\pm (3\ell-2)}\alpha_m) - (-1)^{f_2(\ell)}z\ell \right)
        \\ - \cT(\alpha_m)\cdot \cT(\alpha_{m-1}\oplus^{\pm (3\ell-3)}\alpha_m) - (-1)^{f_1(\ell)}\cT(\alpha_{m-1}) .
    \end{multline*} 
    so replacing $\cT(\alpha_{m-1}\oplus^{\pm (3\ell-3)}\alpha_m) $ by its congruence and regrouping we get
    \begin{multline*}
        \equiv (\cT(\alpha_m)^2-1)\cdot \left[ \cT(\alpha_{m-1}\oplus^{\pm (3\ell-2)}\alpha_m) - (-1)^{f_2(\ell)}z\ell \right.
        \\ \left. + (-1)^{f_2(\ell-1)} z (\ell-1) \cT(\alpha_{m})\right]
        \\ - \cT(\alpha_{m-1})\left( (-1)^{f_1(\ell)} + (-1)^{f_1(\ell-1)} \cT(\alpha_m) \right) .
    \end{multline*} 
    By \Cref{lem:cons}, $(\cT(\alpha_m)^2-1)$ is divisible by $(w^2-1)$. By \Cref{lem:Tmu} and also \Cref{lem:cons}, the term inside the square brackets is also divisible by $(w^2-1)$. So the first product is divisible by $(w^2-1)^2$. 
    By \Cref{lem:cons}, $\cT(\alpha_{m-1})$ is divisible by $(w^2-1)$ and $(-1)^{f_1(\ell)} + (-1)^{f_1(\ell-1)} \cT(\alpha_m) $ vanishes at $w=1$. So the second product is also divisible by $(w^2-1)^2$, and the result follows.
\end{proof}

\begin{proposition}\label{prop:recursion_z}
With $\alpha$ as in \Cref{prop:ORS} and $m=2n$, let  
\[ \beta = [3\sigma_1,2\zeta_1,\dots,3\sigma_{n+1},2\zeta_{n+1}] \]
be the continued fraction expansion obtained from $\alpha$ by appending the term $2\zeta_{n+1}$.
Then 
    \[
    \frac{\cT(\beta)^2-1}{w^2-1}(1,z) = \frac{\cT(\alpha_m)^2-1}{w^2-1}(1,z)\pm 2\zeta_{n+1}z^2 \cdot \frac{\cT(\alpha)}{z(w^2-1)}\Big|_{w=1}
    \]
where $\frac{\cT(\alpha)}{z(w^2-1)}\Big|_{w=1}\in\ZZ[z^2]$ by \Cref{lem:Z[z^2]}.
\end{proposition}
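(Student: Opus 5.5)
By \Cref{lem:farey_truncs} we have $\beta=\alpha_{m+2}=\alpha_m\oplus^{\pm 2\zeta_{n+1}}\alpha_{m+1}$, where $\alpha=\alpha_{m+1}$. The plan mirrors the proof of \Cref{prop:recursion_epsilon}. We work modulo $(w^2-1)^2$ and track the first-order term of $\cT(\beta)$ at $w=1$. Put $k:=|\zeta_{n+1}|$, and for $0\le j\le 2k$ set $\beta_j=\alpha_m\oplus^{\pm j}\alpha_{m+1}$.

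Since $\cT(\alpha)(1,z)=0$ by \Cref{lem:cons}, we can write $\cT(\alpha)\equiv z(w^2-1)P$ modulo $(w^2-1)^2$, with $P:=\frac{\cT(\alpha)}{z(w^2-1)}\big|_{w=1}\in\ZZ[z^2]$ (\Cref{lem:Z[z^2]}). The Farey recursion \eqref{eq:frf 2} with the pair $\{\alpha_{m+1},\beta_{j}\}$ gives
\[ \cT(\beta_{j+2})=-\cT(\beta_j)+\cT(\alpha)\cT(\beta_{j+1}). \]
Because $\cT(\alpha)$ is divisible by $(w^2-1)$, only the value of $\cT(\beta_{j+1})$ at $w=1$ matters in the second term. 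That value is easy to compute: \Cref{lem:cons} gives $\cT(\beta_{\text{even}})(1,z)=\pm1$, and the odd ones $\cT(\beta_{2i+1})(1,z)$ equal $\pm z$ by the same argument as in \Cref{lem:Tmu}(2), now applied with the roles shifted by one step. Let $c:=\cT(\alpha_m)(1,z)=\pm1$. Inducting on $i$, I will show
\[ \cT(\beta_{2i})\equiv (-1)^i\cT(\alpha_m)\pm 2i\,c\,z^2(w^2-1)P \pmod{(w^2-1)^2}. \]
Each double step flips the sign and adds a contribution $z(w^2-1)P\cdot(\pm z)$ twice: once from the odd-indexed term, and once, after a further application of the recursion, from expressing $\cT(\beta_{2i+1})$ in terms of $\cT(\beta_{2i})$. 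The signs must line up so that these contributions accumulate rather than cancel. This is the analogue of the $\ell$-counting in \Cref{prop:recursion_epsilon}.

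With $i=k$ we get $\cT(\beta)\equiv \pm\cT(\alpha_m)\pm 2k\,c\,z^2(w^2-1)P$. Squaring modulo $(w^2-1)^2$ gives
\[ \cT(\beta)^2\equiv \cT(\alpha_m)^2\pm 4k\,c\,\cT(\alpha_m)z^2(w^2-1)P\equiv \cT(\alpha_m)^2\pm 4k z^2(w^2-1)P, \]
using $c\,\cT(\alpha_m)\equiv c^2=1$ modulo $(w^2-1)$. Subtracting $1$, dividing by $w^2-1$, and setting $w=1$ yields the claim, provided the coefficient comes out to $2k$ and not $4k$. The factor of two will have to be checked carefully. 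It arises because $w^2-1=(w-1)(w+1)$, so dividing by $w^2-1$ and evaluating at $w=1$ differs from taking $\partial_w$ by a factor of $2$. I expect instead that the per-step increment is $\pm z^2(w^2-1)P$, one contribution per odd step. Summing over the $k$ double steps gives $\pm 2k$ only after doubling in the square, that is, $\cT(\beta)\equiv\pm\cT(\alpha_m)\pm k c z^2(w^2-1)P$. So I would first carry out the case $k=1$ explicitly with the two recursion steps $\beta_2$, $\beta_1$ and fix the normalization there.

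The main obstacle is pinning down the signs and the odd-index values $\cT(\beta_{2i+1})(1,z)$ consistently. This requires an analogue of \Cref{lem:Tmu} for the pair $\{\alpha_m,\alpha_{m+1}\}$, showing $\cT(\alpha_m\oplus^{\pm(2i+1)}\alpha_{m+1})(1,z)=\pm z$ with a sign pattern that makes the contributions add. I would prove this by the same reduction used in \Cref{lem:Tmu}, recursing down to $\cT(\alpha_m\oplus^{\pm1}\alpha_{m+1})=\cT(\alpha_{m}\oplus^{\pm1}\alpha_{m-1}\oplus\cdots)$ and ultimately to the values in \Cref{lem:Tmu}(3) at $\ell=|\sigma_{n+1}|$. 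The $\pm$ in the statement absorbs any residual global sign.
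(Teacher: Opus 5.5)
Your approach works, but only in the corrected form you reach near the end. The displayed induction claim with coefficient $2i$ is false. Set $v_j:=\cT(\beta_j)(1,z)$ and write $\cT(\beta_{2i})\equiv(-1)^i\cT(\alpha_m)+E_i\,z(w^2-1)P$. In $\cT(\beta_{j+2})=-\cT(\beta_j)+\cT(\alpha)\cT(\beta_{j+1})$ the factor $\cT(\alpha)$ already vanishes at $w=1$, so $\cT(\beta_{j+1})$ enters only through $v_{j+1}$. Each double step therefore contributes exactly once, and re-expanding $\cT(\beta_{2i+1})$ adds nothing. This gives $E_0=0$ and $E_{i+1}=-E_i+v_{2i+1}$.

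The sign issue you call the main obstacle resolves itself. The same recursion at $w=1$ gives $v_{j+2}=-v_j$, so $v_{2i+1}=(-1)^iv_1$ flips in step with $\cT(\beta_{2i})$. Hence $E_i=(-1)^{i+1}i\,v_1$, and the contributions accumulate rather than cancel. The only input you need is $v_1=\pm z$. This follows from \Cref{lem:Tmu}(3) plus a few more steps of the recursion around $\alpha_m$, because $\beta_1=\alpha_{m-1}\oplus^{\pm(3\ell\pm1)}\alpha_m$ with $\ell=|\sigma_{n+1}|$. The factor $2$ in the statement then comes only from the cross term when you square; the remark about $\partial_w$ is a red herring.

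One technical fix: keep the bookkeeping modulo $(w-1)^2$, not $(w^2-1)^2$. Both $\alpha_m$ and every $\beta_{2i}$ have even numerator, so their $\cT$-values carry the factor $f=w$ and are odd in $w$. For example, your step $c\,\cT(\alpha_m)\equiv 1 \pmod{w^2-1}$ fails at $w=-1$. Since everything is evaluated at $w=1$ at the end, working modulo $(w-1)^2$ loses nothing.

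The paper organizes the same computation differently: it squares the recursion at every step rather than once at the end. Put $I_j=\frac{\cT(\beta_j)^2-1}{w^2-1}(1,z)$. Squaring $\cT(\beta_{j+2})=-\cT(\beta_j)+\cT(\alpha)\cT(\beta_{j+1})$, dividing by $w^2-1$ and setting $w=1$ kills the $\cT(\alpha)^2$ term and gives directly $I_{j+2}=I_j-2zP\,v_jv_{j+1}$. Since $v_{2i}v_{2i+1}=c\,v_1$ is constant, each double step adds the same $\pm 2z^2P$, and induction on $|\zeta_{n+1}|$ finishes.

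That route uses only values at $w=1$. It needs no first-order expansion of $\cT(\beta_{2i})$, never meets the $(w-1)$ versus $(w^2-1)$ issue, and shows the factor $2$ at every step. Your route gives a slightly finer first-order formula for $\cT(\beta)$ itself before squaring. Both rest on the same two inputs, $v_{j+2}=-v_j$ and $v_1=\pm z$.
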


\begin{proof}
    By \Cref{lem:farey_truncs} $\beta=\alpha\oplus^{\pm 2\zeta_{n+1}} \alpha_m$, so
    \[ \cT(\beta) = \cT(\alpha) \cdot \cT(\alpha_m\oplus^{\pm (2s-1)} \alpha)-\cT(\alpha_m\oplus^{\pm (2s-2)} \alpha)
    \]
    where $s=|\zeta_{n+1}|$.

    Denote $I_k=\frac{\cT(\alpha_m\oplus^{\pm k}\alpha)^2-1}{w^2-1}(1,z)$. It suffices to show that
    \[ I_{2s} = I_0 \pm 2\zeta_{n+1}z^2 \cdot \frac{\cT(\alpha)}{z(w^2-1)}\Big|_{w=1}\]
    since $I_{2s}=\frac{\cT(\beta)^2-1}{w^2-1}(1,z)$ and $I_0=\frac{\cT(\alpha_m)^2-1}{w^2-1}(1,z)$.

    By squaring the Farey recursion expansion and dividing by $w^2-1$, we get
    \begin{multline*}
        I_{2s} 
         = \frac{\cT(\alpha)^2\cdot \cT(\alpha_m\oplus^{\pm (2s-1)} \alpha)^2}{w^2-1}(1,z)
        \\ - 2\cdot \frac{\cT(\alpha) \cdot \cT(\alpha_m\oplus^{\pm (2s-1)} \alpha) \cdot \cT(\alpha_m\oplus^{\pm (2s-2)} \alpha)}{w^2-1}(1,z)
        + I_{2s-2}.
    \end{multline*}
    
    Since $\cT(\alpha)(1,z)=0$ by \Cref{lem:cons}, the first term vanishes.
    
    By \Cref{lem:Z[z^2]}, $\frac{\cT(\alpha)}{(w^2-1)}(1,z)=z\cdot \frac{\cT(\alpha)}{z(w^2-1)}\Big|_{w=1}$, and by \Cref{lem:Tmu},
    \[ \cT(\alpha_m\oplus^{\pm (2s-1)} \alpha)(1,z)=(-1)^{\tau+\zeta_{n+1}-1}z \]
    and
    \[ \cT(\alpha_m\oplus^{\pm (2s-2)} \alpha)(1,z)=(-1)^{\sum_{i=1}^n\zeta_i} . \]
    Therefore the second term becomes
    \begin{multline*}
        -2\cdot (-1)^{\tau+\zeta_{n+1}-1} z 
        \cdot (-1)^{\sum_{i=1}^n\zeta_i} z
        \cdot \frac{\cT(\alpha)}{z(w^2-1)}\Big|_{w=1} 
        \\ = \pm 2 z^2 \cdot \frac{\cT(\alpha)}{z(w^2-1)}\Big|_{w=1} .
    \end{multline*}

    We now have
    \[ I_{2s} = I_{2s-2}
        \pm 2 z^2 \cdot \frac{\cT(\alpha)}{z(w^2-1)}\Big|_{w=1} . \]

    When $s=1$, we already have $I_{2s} = I_0
    \pm 2 z^2 \cdot \frac{\cT(\alpha)}{z(w^2-1)}\Big|_{w=1}$ from above.

    Assume $I_{2(s-1)} = I_0 \pm 2(\zeta_{n+1} \mp 1) \cdot \frac{\cT(\alpha)}{z(w^2-1)}\Big|_{w=1}$.
    Then
    \[ I_{2s} = I_{2s-2}
        \pm 2 z^2 \cdot \frac{\cT(\alpha)}{z(w^2-1)}\Big|_{w=1} 
        = I_0 \pm 2\zeta_{n+1} \cdot \frac{\cT(\alpha)}{z(w^2-1)}\Big|_{w=1} . \qedhere \]
\end{proof}

\subsection{Proof of Theorem A}
We are now ready to prove \Cref{thm:A}.

\begin{proof}[Proof of \Cref{thm:A}]
For $\ell=2k$ with $0\leq k\leq n$, define
\[ a_\ell:=\frac{\cT(\alpha_{\ell-1})}{z(w^2-1)}\Big|_{w=1}  \quad \text{ and } \quad b_\ell:=\frac{\cT(\alpha_\ell)^2-1}{w^2-1}(1,z).\]
Recall that by \Cref{lem:Z[z^2]}, $a_\ell\in\ZZ[z^2]$.
We also have that $b_\ell\in\ZZ[z^2]$, since $\cT(\alpha_\ell)\in\ZZ[w^2,z^2]$ by \Cref{lem:T0} and $\cT(\alpha_\ell)(1,z)=\pm1$ by \Cref{lem:cons}. 

By \Cref{prop:recursion_epsilon,prop:recursion_z}, we have the recursive relations
\[ a_{\ell+2}=\pm a_\ell \pm \sigma_{k+1} b_\ell \quad \text{ and } \quad b_{\ell+2} = b_\ell + 2\zeta_{k+1}z^2 a_{\ell+2} . \]

When $\ell=0$, $a_0=\frac{\cT(\alpha_{-1}}{z(w^2-1)}\Big|_{w=1}$, where $\alpha_{-1}=1/0$. 
By definition, $\cT(1/0)=0$, so $a_0=0$.
Also, $b_0=\frac{\cT(\alpha_{0})^2-1}{w^2-1}(1,z)$, where $\alpha_0=0/1$.
By definition, $\cT(0/1)=w$, so $b_0=1$.

We now claim that when $\ell\geq2$,
\begin{gather*} \deg a_\ell = \ell-2 ,\quad \deg b_\ell = \ell 
,\quad a_\ell\equiv \sum_{i=1}^k\sigma_i\bmod {2\ZZ[z]} 
, \\ \text{and } \quad b_\ell\equiv 1\bmod {2\ZZ[z]} . \end{gather*}

When $\ell=2$, $a_2=\frac{\cT(\alpha_{1})}{z(w^2-1)}\Big|_{w=1}$, where $\alpha_{1}=[3\sigma_1]$.
By \Cref{lem:farey_truncs}, $[3\sigma_1]=1/0\oplus^{\pm3|\sigma_1|}0/1$. Applying Farey recursion repeatedly with $\cT(1/0)=0$ and $\cT(0/1)=w$, we get by induction on $|\sigma_1|$ that $\cT([\sigma_1])=\pm\sigma_1 z(w^2-1)$, and hence $a_2=\pm\sigma_1$. So $\deg a_2=0$ and $a_2 \equiv \sigma_1 \bmod 2\ZZ[z^2]$.
By \Cref{prop:recursion_z} with $\beta=[3\sigma_1,2\zeta_1]$, $\alpha=[3\sigma_1]$, and $\alpha_0=0/1$,
\[ b_2 
= \frac{\cT(0/1)^2-1}{w^2-1}(1,z) \pm 2\zeta_1 z^2\cdot \frac{\cT([3\sigma_1])}{z(w^2-1)}\Big|_{w=1}
= \frac{w^2-1}{w^2-1}(1,z) \pm 2 \zeta_1 \sigma_1 z^2 . \]
So $\deg b_2=2$ and $b_2\equiv 1\bmod 2\ZZ[z^2]$.

Now assume the claim holds for $\ell-2$, that is, $\deg a_{\ell-2} = \ell-4$, $\deg b_{\ell-2} = \ell-2$, $a_{\ell-2}\equiv \sum_{i=1}^{k-1}\sigma_i\bmod {2\ZZ[z]}$, and $b_{\ell-2} \equiv 1\bmod {2\ZZ[z]}$.
By \Cref{prop:recursion_epsilon}, $a_\ell=\pm a_{\ell-2}\pm \sigma_k b_{\ell-2}$. It follows that $\deg a_\ell =\deg b_{\ell-2}=\ell-2$ and 
\[ a_\ell\equiv a_{\ell-2} + \sigma_k b_{\ell-2} \equiv \sum_{i=1}^{k-1}\sigma_i +\sigma_k \equiv \sum_{i=1}^{k}\sigma_i \bmod 2\ZZ[z^2]. \]
By \Cref{prop:recursion_z},
$b_\ell= b_{\ell-2}\pm 2\zeta_k z^2 a_\ell$. It follows that $\deg b_\ell = 2+\deg a_\ell=\ell$ and 
\[ b_\ell \equiv b_{\ell-2} + 2\zeta_k z^2 a_\ell \equiv b_{\ell-2}\equiv 1 \bmod 2 \ZZ[z^2] . \]
This confirms the claim.

Now, by \Cref{lem:Z[z^2]}, 
$\phi_\alpha(x) = a_{2n+2} $ after substituting $z^2=x^2-3$, which, by the claim, satisfies $\deg \phi_\alpha = 2n$ and $\phi_\alpha\equiv \sum_{i=1}^{n+1}\sigma_i\bmod 2\ZZ[x]$.
Then by \Cref{lem:epsumodd}, $\phi_\alpha\equiv 1\bmod 2\ZZ[x]$. In particular, except for the constant term which is odd, all other coefficients of $\phi_\alpha$ are even. 

We claim that $\phi_\alpha$ cannot be written as a product $\alpha=fg$ where $f,g\in\ZZ[x]$ and either $f$ or $g$ has degree $\geq 1$ and has odd leading term coefficient.
Suppose by contradiction that this is the case; then we can write $\phi_\alpha=\sum_{i=0}^{2n} c_ix^i$, $f=\sum_{i=0}^m p_ix^i$ where $p_m$ is odd, and $g=\sum_{i=0}^{2n-m} q_ix^i$.
Let $j=\max\{0\leq i\leq {2n-m} \mid q_i \text{ is odd}\}$, which exists since otherwise the constant term of $\phi_\alpha$ would be even. 
Consider the coefficient of $\phi_\alpha=fg$ in degree $j+m\ge 1$:
\[ c_{j+m} = \sum_{(i,k)=j+m}p_i q_k=\sum_{k\geq j}p_{j+m-k} q_k. \]
Since $c_{j+m}$ is even, and $q_k$ is even for $k>j$, we conclude that $p_m q_j$ is also even, contradicting the fact that both $p_m$ and $q_j$ are odd.

For any intersection point $(x_0,y_0)$ between $X_0(\Gamma_{\alpha})$ and $x^2-y-1=0$, we always have $\phi_\alpha(x_0)=0$.
If $x_0$ were an algebraic integer, then its minimal polynomial $q(x)\in\ZZ[x]$ would divide $\phi_\alpha$. It cannot be that $\phi_\alpha=q$ since one has even leading coefficient and the other has leading coefficient 1. This would then contradict the claim. Hence $x_0$ is not an algebraic integer, and the corresponding character is ANI.
\end{proof}

%% file: Slopes.tex
\section{Detected boundary slopes}
\label{sec:slope}

Having established that the intersection points in \Cref{thm:A} are ANI characters, we now determine the boundary slopes of the essential surfaces they detect.
From work of Schanuel and Zhang (see \Cref{boundary_slope}), each such ANI character, with respect to a discrete valuation $\nu$,
detects an essential surface whose boundary slope $\delta$ satisfies $\nu(\chi(\delta))>0$. To identify this slope, we explicitly compute the representations of the meridian $\mu$ and the longitude $\lambda$, to show in \Cref{thm:B} which slope satisfies this condition.

\subsection{Combinatorics of continued fractions}

We first introduce some notations used extensively in this section.

\begin{definition}\label{notation_boundary_slope}
For the rest of this section, we fix the following notations. Let $\alpha\in \hat\QQ$ be a continued fraction of the form $\alpha=q/p=[a_1,a_2,\dots,a_m]$ where $a_i\in \ZZ$, $|a_i|\geq 2$, and $a_1\ge 2$. For every $1\leq j\leq m$, let $\alpha_j=q_j/p_j$ denote the continued fraction consisting of the first $j$ terms of $\alpha$. We also define the sequence $L_j$ with length $q_j$: for $1\le i\le q_j$,
\[
L_j(i)=\#\{0\le k<p_j\mid \left\lfloor \frac{kq_j}{p_j}\right\rfloor=i-1\}.
\]

We define $\cdot*\cdot$ to be the concatenation of sequences, i.e. \[
(b_1,b_2,\cdots,b_k)*(c_1,c_2,\cdots,c_l)=(b_1,b_2,\cdots,b_k,c_1,c_2,\cdots,c_l).\]
We may write $n\times S$ when concatenating $n$ copies of the same sequence $S$.

If the sequence $S_1=(b_1,\cdots,b_k)$ appears as an initial subsequence of $S_2$, i.e. $S_2= (b_1,\dots,b_k, b_{k+1},\dots b_l)$, we define
\[ S_2-S_1 =  (b_{k+1},\dots b_l) . \]

If the sequence $S_1=(b_{k+1},\cdots,b_l)$ appears as a terminal subsequence of $S_2$, i.e. $S_2= (b_1,\dots,b_k, b_{k+1},\dots b_l)$, we define
\[ -S_1+S_2 =  (b_1,\dots b_k) . \]
\end{definition}

\begin{lemma}\label{lem:qp>0andq_j+1>q_j}
    Let $\alpha$ be as in \Cref{notation_boundary_slope}. Then for all $j$, 
    \[ 0\le q_j<p_j  \quad \text{ and } \quad q_{j+1}>q_j,p_{j+1}>p_j. \]
\end{lemma}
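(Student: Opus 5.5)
We prove both parts by working with the unsigned vectors from the matrix form of the continued fraction, rather than with the Farey-sum description. For $j\ge -1$ set
\[ v_j=\bmatcol{Q_j}{P_j} := \bmat{0}{1}{1}{a_1}\cdots\bmat{0}{1}{1}{a_j}\bmatcol{0}{1}, \]
so that $v_0=\bmatcol{0}{1}$ and $v_{-1}=\bmatcol{1}{0}$. By definition, $\bmatcol{q_j}{p_j}=\eta_j v_j$. The computation in the proof of \Cref{lem:farey_truncs} gives the sign-free recursion
\[ v_j=v_{j-2}+a_jv_{j-1}. \]
Since $p_j\ge 0$ by convention, we get $p_j=|P_j|$. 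Likewise $q_j=|Q_j|$, provided we know $q_j\ge 0$.

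\textbf{Step 1: $0\le q_j<p_j$.} We use the analytic description $[a_1,\dots,a_j]=1/(a_1+[a_2,\dots,a_j])$. This follows from $\bmat{0}{1}{1}{a_1}\bmatcol{Q}{P}=\bmatcol{P}{Q+a_1P}$ applied to the vector of the tail $[a_2,\dots,a_j]$.

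By induction on length, every continued fraction $[b_1,\dots,b_k]$ with all $|b_i|\ge 2$ has absolute value at most $1/2$. Indeed, if the tail $y$ satisfies $|y|\le 1/2$, then
\[ \left|\frac{1}{b_1+y}\right|\le \frac{1}{|b_1|-1/2}\le \frac23 . \]
A slightly sharper induction shows $|1/(b_1+y)|\le 1/(2-1/2)$ at each stage, so the bound $1/2$ propagates once we use $|b_1|-|y|\ge 3/2$. In any case the bound is below $1$, and that is all Step 1 needs.

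Then $\alpha_j=1/(a_1+y)$ with $a_1\ge 2$ and $|y|<1$. Hence $a_1+y>1$, so $0<\alpha_j<1$. Since $p_j\ge 0$ and $\alpha_j$ is finite, $p_j>0$. Therefore $0<q_j<p_j$; the case $j=0$ gives $q_0=0$. A minor point is to handle the empty tail, $j=1$, where $\alpha_1=1/a_1$ directly.

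\textbf{Step 2: monotonicity.} Since $q_j\ge 0$, we have $q_j=|Q_j|$, so it suffices to show that $|P_j|$ and $|Q_j|$ are strictly increasing. From the recursion and $|a_j|\ge 2$,
\[ |P_j|\ge 2|P_{j-1}|-|P_{j-2}|, \]
so $|P_{j-1}|>|P_{j-2}|$ implies $|P_j|>|P_{j-1}|$. The base case is $P_{-1}=0$, $P_0=1$, $P_1=a_1\ge 2$.

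For the numerators, $Q_0=0$, $Q_1=1$ and $Q_2=a_2$, so $|Q_2|\ge 2>|Q_1|>|Q_0|$. The same inequality $|Q_j|\ge 2|Q_{j-1}|-|Q_{j-2}|$ then propagates strict growth for all $j\ge 1$.

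\textbf{Main obstacle.} The only delicate point is bookkeeping of signs. The paper's $\eta_j$ and the $\pm$ exponents in \Cref{lem:farey_truncs} obscure the growth. Passing to the unsigned vectors $v_j$, and recovering $q_j$ and $p_j$ as absolute values using the positivity from Step 1, removes this difficulty.
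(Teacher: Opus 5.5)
There is one incorrect step, in Step 1. The claim that $|[b_1,\dots,b_k]|\le 1/2$ whenever all $|b_i|\ge 2$ is false. Already $[2,-2]=1/(2-1/2)=2/3$, and continuing the pattern gives $[2,-2,2]=3/4$ and $[2,-2,2,-2]=4/5$. These values approach $1$, so no constant $c<1$ is a uniform bound.

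Your own computation exposes the problem: from $|y|\le 1/2$ you only get $|1/(b_1+y)|\le 2/3$. The ``sharper induction'' sentence cannot rescue this, since $1/(2-1/2)$ is $2/3$, not $1/2$. So the remark ``in any case the bound is below $1$'' is left unproved. Yet the strict inequality $|y|<1$ is exactly what you need to get $a_1+y>1$, and hence $q_j<p_j$ strictly.

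The repair is to make $|y|<1$ itself the inductive hypothesis. The base case is $|1/b_1|\le 1/2<1$. If the tail satisfies $|y|<1$, then $|b_1+y|\ge |b_1|-|y|>1$, so $b_1+y\neq 0$ and $|1/(b_1+y)|<1$. This also shows that every tail has a finite value, which you use implicitly when you write $\alpha_j=1/(a_1+y)$.

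With this correction your argument is complete, and it is organized differently from the paper's. The paper works with the normalized pairs $(q_j,p_j)$ and the Farey-sum form of \Cref{lem:farey_truncs}. It gets positivity by observing that $q_{j-2}\pm a_jq_{j-1}$ and $p_{j-2}\pm a_jp_{j-1}$ are nonzero with the same sign as $\pm a_j$. It gets growth from $|a_j|q_{j-1}-q_{j-2}\ge 2q_{j-1}-q_{j-2}>q_{j-1}$.

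Your Step 2 is the same growth estimate, run instead on the unsigned vectors $v_j$, where $v_j=v_{j-2}+a_jv_{j-1}$ holds with no signs. Positivity moves into the separate analytic Step 1. A benefit of your route is that Step 1 gives $q_j<p_j$ for every $j$. The paper's inductive step only propagates positivity and monotonicity, so that inequality is checked there only in the base case.
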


\begin{proof}
    We induct on $j$. When $j=0$, we have $q_0/p_0=0/1$ and $q_{1}/p_{1}=1/a_1$ where $a_1\geq2$, so $0=q_0<p_0=1$ and $q_1=1<a_1=p_1$.

    Assume the claim holds for all indices less than $j$.
    By \Cref{lem:farey_truncs} we have
    \[
    \frac {q_j}{p_j}=\frac{q_{j-2}}{p_{j-2}}\oplus^{\pm a_j}\frac{q_{j-1}}{p_{j-1}}
    \]
    so $q_j=q_{j-2}\pm a_j q_{j-1}$ and $p_j=p_{j-2}\pm a_j p_{j-1}$. 
    By assumption, $q_{j-1}>q_{j-2}\geq 0$ and $p_{j-1}>p_{j-2}\geq 0$.
    Since $|a_j|\geq 2$,
    \[ |a_j| q_{j-1}\geq 2 q_{j-1} > 2 q_{j-2} > q_{j-2} \]
    so $q_{j-1}\pm a_j q_{j-1}$ is nonzero and has the same sign as $\pm a_j$, giving $q_j>0$. The same argument applies to $p_j$, so also $p_j>0$.

    We also have 
    \[ q_j= |q_{j-2}\pm a_j q_{j-1} | \geq |a_j|q_{j-1} - q_{j-2} \geq 2 q_{j-1} - q_{j-2} > q_{j-1} \]
    and similarly $p_j>p_{j-1}$.
\end{proof}

\begin{lemma}\label{lem:farey_sum_oplus_pm}
    Let $\epsilon=1$ when $a_{m-1},a_{m}$ have the same sign and $\epsilon=-1$ otherwise, then
    \[
    q=|a_m|q_{m-1}+\epsilon q_{m-2},\qquad p=|a_m|p_{m-1}+\epsilon p_{m-2}.
    \]
\end{lemma}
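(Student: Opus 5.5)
We will derive the statement directly from the signed Farey recursion of \Cref{lem:farey_truncs}, combined with the sign bookkeeping and positivity from \Cref{lem:qp>0andq_j+1>q_j}. By \Cref{lem:farey_truncs} applied with $j=m$,
\[ \frac{q}{p}=\frac{q_m}{p_m}=\alpha_{m-2}\oplus^{\eta_{m-2}\eta_m a_m}\alpha_{m-1}, \]
so there is a sign $\theta\in\{\pm1\}$ with
\[ q=\theta\bigl(q_{m-2}+\eta_{m-2}\eta_m a_m q_{m-1}\bigr) \quad\text{and}\quad p=\theta\bigl(p_{m-2}+\eta_{m-2}\eta_m a_m p_{m-1}\bigr). \]
By \Cref{lem:qp>0andq_j+1>q_j}, all the quantities $q_{m-2},q_{m-1},p_{m-2},p_{m-1},q,p$ are nonnegative. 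Moreover $|a_m|q_{m-1}>q_{m-2}$ and $|a_m|p_{m-1}>p_{m-2}$. Hence the sign of $q_{m-2}+\eta_{m-2}\eta_m a_m q_{m-1}$ is the sign of $\eta_{m-2}\eta_m a_m$, and the same holds for $p$. Positivity of $q$ and $p$ then forces $\theta=\operatorname{sign}(\eta_{m-2}\eta_m a_m)$. It follows that
\[ q=|a_m|q_{m-1}+\epsilon' q_{m-2},\qquad p=|a_m|p_{m-1}+\epsilon' p_{m-2}, \qquad \epsilon'=\eta_{m-2}\eta_m\operatorname{sign}(a_m). \]
This $\epsilon'$ is common to both equations, so it remains to show $\epsilon'=\epsilon$, that is, $\eta_{m-2}\eta_m=\operatorname{sign}(a_{m-1})$.

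To identify $\eta_{m-2}\eta_m$, we first relate $\eta_{j}$ and $\eta_{j-1}$ through the matrix product. Write $v_j=\eta_j\bmatcol{q_j}{p_j}$, so that $v_j=v_{j-1}$-type recursions come from the product of the matrices $\bmat{0}{1}{1}{a_i}$. Applying \Cref{lem:farey_truncs} at index $j-1$ in the same way gives
\[ q_{j-1}=|a_{j-1}|q_{j-2}+\eta_{j-3}\eta_{j-1}\operatorname{sign}(a_{j-1})\,q_{j-3}. \]
A cleaner route is to compare the two determinants that appear in the proof of \Cref{lem:farey_truncs}. The matrix product gives $\det[v_{j-1}\,|\,v_j]=\pm1$, with a sign that alternates deterministically in $j$, since each factor $\bmat{0}{1}{1}{a}$ has determinant $-1$. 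On the other hand, $p_{j-1}q_j-p_jq_{j-1}$ is the determinant of the actual positive vectors. Since all $q_j/p_j$ lie in $[0,1)$, the ordering of consecutive convergents determines its sign: $\alpha_j$ lies on the left or right of $\alpha_{j-1}$ according to $\operatorname{sign}(a_j)$ and the previous orientation. Comparing the two signs yields $\eta_{j-1}\eta_j$ in terms of the signs of the $a_i$.

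The step I expect to be the main obstacle is pinning down these signs cleanly, that is, proving the identity $\eta_{m-2}\eta_m=\operatorname{sign}(a_{m-1})$. I would first try to show it by induction on $m$, using the orientation of the Farey triple $\{\alpha_{m-2},\alpha_{m-1},\alpha_m\}$. The quantity $\eta_{m-2}\eta_m$ records whether $v_m$ and $v_{m-2}$ point in the same direction relative to $v_{m-1}$. From $v_m=v_{m-2}+a_m v_{m-1}$ and $v_{m-2}=v_m-a_mv_{m-1}$, one should read off that this direction is governed by the sign of $a_{m-1}$, via $v_{m-1}=v_{m-3}+a_{m-1}v_{m-2}$ and positivity. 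As a check, I would verify the small cases $m=2$: the expansions $[2,2]$, $[2,-2]$, and $[3,2]$ against direct computation, which also fixes any global sign convention.
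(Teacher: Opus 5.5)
Your argument has a genuine gap, and it starts in the first display. You use the exponent $\eta_{m-2}\eta_m a_m$ as printed in \Cref{lem:farey_truncs}, but that exponent is a misprint. The computation in the proof of that lemma is the exact vector identity
\[ \eta_m\bmatcol{q_m}{p_m}=\eta_{m-2}\bmatcol{q_{m-2}}{p_{m-2}}+a_m\,\eta_{m-1}\bmatcol{q_{m-1}}{p_{m-1}}, \]
and its case split ($\eta_{j-2}=\eta_{j-1}$ versus $\eta_{j-2}\neq\eta_{j-1}$) gives the exponent $\eta_{m-2}\eta_{m-1}a_m$. Consequently your formulas $q=\theta(q_{m-2}+\eta_{m-2}\eta_m a_m q_{m-1})$ and $p=\theta(\cdots)$ are false, and so is the identity you say remains to be proved.

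Take $[2,-2]$. The matrix product gives $\bmatcol{-2}{-3}$, so $\eta_0=\eta_1=1$, $\eta_2=-1$ and $\alpha=2/3$. Your formula gives $p=\pm(1+4)=\pm5\neq 3$, and $\eta_0\eta_2=-1\neq\operatorname{sign}(a_1)$. In fact $\eta_{m-2}\eta_m=\operatorname{sign}(a_{m-1})\operatorname{sign}(a_m)$ always, so your target fails whenever $a_m<0$; the $[2,-2]$ check you planned would have caught this. Separately, the second half of your proposal (determinants of consecutive convergents, orientation of Farey triples) is only a sketch and never establishes any sign identity.

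The missing idea is that $\theta$ is not an unknown sign: the identity above gives $\theta=\eta_m\eta_{m-2}$ exactly. So write $p_j=\eta_j\eta_{j-2}p_{j-2}+\eta_j\eta_{j-1}a_jp_{j-1}$. By \Cref{lem:qp>0andq_j+1>q_j}, $p_j>0$ and $|a_j|p_{j-1}>p_{j-2}\ge 0$, so the second term must be positive. Hence $\eta_j\eta_{j-1}=\operatorname{sign}(a_j)$ for every $j\ge 1$. Applying this at $j=m$ and $j=m-1$ gives $\eta_m\eta_{m-1}a_m=|a_m|$ and $\eta_m\eta_{m-2}=\operatorname{sign}(a_m)\operatorname{sign}(a_{m-1})=\epsilon$. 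Substituting into both coordinates of the exact identity yields $q=|a_m|q_{m-1}+\epsilon q_{m-2}$ and $p=|a_m|p_{m-1}+\epsilon p_{m-2}$.

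This is the same positivity mechanism the paper uses. It expands $\bmatcol{q}{p}$ as $\pm M\bmat{0}{1}{1}{a_{m-1}}\bmat{0}{1}{1}{a_m}\bmatcol{0}{1}$, with $M$ built from signed copies of the vectors of $\alpha_{m-3}$ and $\alpha_{m-2}$. It uses $q_{m-1}>0$ to identify the inner combination $c_1q_{m-3}+a_{m-1}c_2q_{m-2}$ as $\operatorname{sign}(a_{m-1})c_2q_{m-1}$, substitutes, and then uses $q>0$ to remove the overall sign. No Farey orientation argument is needed.
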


\begin{proof}
By \Cref{lem:farey_truncs}, we know for some $c_1, c_2\in \{\pm 1\}$,
\[
\begin{bmatrix}
    q\\ p
\end{bmatrix}=\pm\begin{bmatrix}
     c_1 q_{m-3} & c_2 q_{m-2}\\ c_1 p_{m-3} & c_2 q_{m-2}
\end{bmatrix}\begin{bmatrix}
    0&1\\ 1&a_{m-1}
\end{bmatrix}\begin{bmatrix}
    0&1\\ 1&a_m
\end{bmatrix}\begin{bmatrix}
    0\\ 1
\end{bmatrix}.
\]
We then compute that
\begin{multline*}
    \begin{bmatrix}
    c_1 q_{m-3} & c_2 q_{m-2}\\ c_1 p_{m-3} & c_2 q_{m-2}
\end{bmatrix} \begin{bmatrix}
    0&1\\ 1&a_{m-1}
\end{bmatrix} \begin{bmatrix}
    0&1\\ 1&a_m
\end{bmatrix} \\
= \begin{bmatrix}
    c_1 q_{m-3}+a_{m-1} c_2 q_{m-2} & a_m(c_1 q_{m-3}+a_{m-1}c_2q_{m-2}) +c_2 q_{m-2}\\ c_1 p_{m-3}+a_{m-1}c_2 p_{m-2} & a_m(c_1 p_{m-3} +a_{m-1} c_2 p_{m-2})+c_2 p_{m-2}
\end{bmatrix}.
\end{multline*}
By \Cref{lem:qp>0andq_j+1>q_j}, $q_{m-1}=d_{1}c_2 [c_1 q_{m-3}+a_{m-1}c_2 q_{m-2}]$ where $d_1=1$ when $a_{m-1}$ is positive and $d_1=-1$ otherwise. Define $d_2=1$ when $a_m$ is positive and $d_2=-1$ otherwise.
Then
\[ a_m(c_1 q_{m-3}+a_{m-1}c_2 q_{m-2}) +c_2 q_{m-2} =c_2 [d_1 d_2 |a_m| q_{m-1}+q_{m-2}], \]
and the same holds for $p$, so the conclusion follows since $\epsilon=d_1 d_2$.
\end{proof}

\begin{lemma}\label{qp-pq=pm1}
    When $m$ is odd and $a_{m}<0$ or $m$ is even and $a_{m}>0$, $q_{m-1}p-qp_{m-1}=1$. Otherwise, $q_{m-1}p-qp_{m-1}=-1$.
\end{lemma}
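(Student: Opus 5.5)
The plan is to use the determinant of the matrix product defining $(q_j,p_j)$ and track the sign factors $\eta_j$ by induction on $m$. Put
\[ M_j=\bmat{0}{1}{1}{a_1}\cdots\bmat{0}{1}{1}{a_j}, \]
so that $M_j$ has determinant $(-1)^j$. Its second column is $M_{j-1}\bmat{0}{1}{1}{a_j}\bmatcol{0}{1}=M_j\bmatcol{0}{1}=\eta_j\bmatcol{q_j}{p_j}$. Its first column is $M_{j-1}\bmatcol{0}{1}=\eta_{j-1}\bmatcol{q_{j-1}}{p_{j-1}}$. Hence
\[ \eta_{j-1}\eta_j\,(q_{j-1}p_j-q_jp_{j-1})=\det M_j=(-1)^j . \]
With $j=m$ this gives $q_{m-1}p-qp_{m-1}=(-1)^m\eta_{m-1}\eta_m$. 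The statement therefore reduces to a sign computation: we need $\eta_{m-1}\eta_m=-1$ exactly when $a_m<0$.

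Next I would determine $\eta_{m-1}\eta_m$. Write $M_m\bmatcol{0}{1}=M_{m-1}\bmatcol{1}{a_m}$. The matrix $M_{m-1}$ has columns $\eta_{m-2}(q_{m-2},p_{m-2})^T$ and $\eta_{m-1}(q_{m-1},p_{m-1})^T$, so
\[ \eta_m p_m=\eta_{m-2}p_{m-2}+a_m\eta_{m-1}p_{m-1}. \]
By \Cref{lem:qp>0andq_j+1>q_j} we have $p_{m-1}>p_{m-2}\ge0$, and $|a_m|\ge2$. So the right-hand side is nonzero and has the sign of $a_m\eta_{m-1}$. Since $p_m>0$, this forces $\eta_m=\operatorname{sign}(a_m)\,\eta_{m-1}$, that is, $\eta_{m-1}\eta_m=\operatorname{sign}(a_m)$. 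This is the same dominance argument used in \Cref{lem:qp>0andq_j+1>q_j} and \Cref{lem:farey_sum_oplus_pm}.

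Combining the two steps gives
\[ q_{m-1}p-qp_{m-1}=(-1)^m\operatorname{sign}(a_m). \]
This equals $1$ exactly when $m$ is even and $a_m>0$, or $m$ is odd and $a_m<0$, and equals $-1$ otherwise, as claimed.

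The main obstacle is the boundary case $m=1$ and the conventions there. In that case $\alpha_0=0/1$ and $M_0$ is the identity, so $\eta_0=1$. Also $q_1/p_1=1/a_1$ with $a_1\ge2$, so $\eta_1=1$, consistent with the formula $\eta_0\eta_1=\operatorname{sign}(a_1)$. The claimed value is then $q_0p_1-q_1p_0=0\cdot a_1-1=-1$, which matches $(-1)^1\operatorname{sign}(a_1)=-1$. For $m\ge2$ the inequality $p_{m-1}>p_{m-2}$ from \Cref{lem:qp>0andq_j+1>q_j} is all that is needed. I should also check that the sign convention $p_j\ge0$ is consistent with the definition of $\eta_j$. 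That holds because $p_j>0$ for all $j\ge0$, again by \Cref{lem:qp>0andq_j+1>q_j}.
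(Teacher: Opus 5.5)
Your proof is correct, and it takes a different route from the paper's.

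The paper argues by induction on $m$. It checks $m=1,2$ by hand. It then combines the three-term recursion $q=|a_m|q_{m-1}+\epsilon q_{m-2}$ of \Cref{lem:farey_sum_oplus_pm} with the inductive value of $q_{m-2}p_{m-1}-q_{m-1}p_{m-2}$. It splits into cases by the parity of $m$ and the relative signs of $a_{m-1},a_m$, carrying out one case and calling the rest similar.

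You instead read off $q_{m-1}p-qp_{m-1}=(-1)^m\eta_{m-1}\eta_m$ directly from $\det M_m=(-1)^m$. This leaves only the single sign computation $\eta_{m-1}\eta_m=\mathrm{sign}(a_m)$. That computation rests on the same dominance inequality $|a_m|p_{m-1}>p_{m-2}$ that underlies the monotonicity lemma.

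Your route buys a closed formula $(-1)^m\,\mathrm{sign}(a_m)$ that is uniform in all cases. It needs no induction on the lemma itself and no appeal to \Cref{lem:farey_sum_oplus_pm}. It also makes explicit the sign bookkeeping that the paper's case analysis handles implicitly.

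The paper's route stays inside the recursion $q=|a_m|q_{m-1}+\epsilon q_{m-2}$, which it reuses immediately in \Cref{prop:recursionLj}. However, it is longer and leaves most of the cases to the reader.
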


\begin{proof}
    We induct on $m$. When $m=1$, we have $q/p=1/a_m$ given $a_1>0$ and $q_{m-1}/p_{m-1}=0/1$, and the claim holds. According to \Cref{lem:farey_sum_oplus_pm},
    when $m=2$, if $a_2>0$, we have $q/p=\frac{a_2}{a_1a_2+1}$ and $q_1p-qp_1=1\cdot(a_1a_2+1)-a_2\cdot a_1=1$; if $a_2<0$, we have $q/p=\frac{a_2}{a_1a_2-1}$ and $q_1p-qp_1=1\cdot(a_1a_2-1)-a_2\cdot a_1=-1$.

    Now assume $m$ is odd and $a_m<0$. If $a_{m-1}<0$, by induction hypothesis we know $q_{m-2}p_{m-1}-q_{m-1}p_{m-2}=-1$, and by \Cref{lem:farey_sum_oplus_pm} we have $q=|a_m|q_{m-1}+q_{m-2}$ and same for $p$. Then we may compute that
    \begin{align*}
        q_{m-1}p-qp_{m-1}&=q_{m-1}(|a_m|p_{m-1}+p_{m-2})-(|a_m|q_{m-1}+q_{m-2})p_{m-1}\\
        &=q_{m-1}p_{m-2}-q_{m-2}p_{m-1}=1.
    \end{align*}
    Similarly, if $a_{m-1}>0$, then $q_{m-2}p_{m-1}-q_{m-1}p_{m-2}=1$ and $q=|a_m|q_{m-1}-q_{m-2}$, so we have
    \begin{align*}
        q_{m-1}p-qp_{m-1}&=q_{m-1}(|a_m|p_{m-1}-p_{m-2})-(|a_m|q_{m-1}-q_{m-2})p_{m-1}\\
        &=-q_{m-1}p_{m-2}+q_{m-2}p_{m-1}=1.
    \end{align*}

    The cases when $m$ is odd and $a_m>0$ or when $m$ is even can be proven similarly.
\end{proof}

\begin{proposition}\label{prop:recursionLj}
Let $\alpha=q/p=[a_1,a_2,\dots,a_{m},a_{m+1}]$ be as in \Cref{notation_boundary_slope}.
We have the following recursive relations in $L_j$.
\begin{enumerate}
    \item When $m$ is even:
    \begin{enumerate}
        \item $a_{m},a_{m+1}>0$: $L_{m+1}=(a_{m+1}\times L_{m})*L_{m-1}$
        \item $a_{m},a_{m+1}<0$:
        $L_{m+1}=L_{m-1}*(|a_{m+1}|\times L_{m})$
        \item $a_{m}>0,a_{m+1}<0$:
        $L_{m}=(L_{m}-L_{m-1})*[(|a_{m+1}|-1)\times L_{m}]$
        \item $a_{m}<0,a_{m+1}>0$:
        $L_{m+1}=[(a_{m+1}-1)\times L_{m}]*(-L_{m-1}+L_{m})$
    \end{enumerate}
    \item When $m$ is odd:
    \begin{enumerate}
        \item $a_{m},a_{m+1}>0$: $L_{m+1}=L_{m-1}*(a_{m+1}\times L_{m})$
        \item $a_{m},a_{m+1}<0$:
        $L_{m+1}=(|a_{m+1}|\times L_{m})*L_{m-1}$
        \item $a_{m}>0,a_{m+1}<0$:
        $L_{m+1}=[(|a_{m+1}|-1)\times L_{m}]*(-L_{m-1}+L_{m})$
        \item $a_{m}<0,a_{m+1}>0$:
        $L_{m+1}=(L_{m}-L_{m-1})*[(a_{m+1}-1)\times L_{m}]$.
    \end{enumerate}
\end{enumerate}    
\end{proposition}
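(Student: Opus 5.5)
We plan to interpret $L_j$ as the run-length encoding of the floor sequence $k\mapsto \lfloor kq_j/p_j\rfloor$ for $0\le k<p_j$. That is, $L_j(i)$ counts how many $k$ have floor value $i-1$. Then $L_j$ is a sequence of $q_j$ positive integers summing to $p_j$; this uses $0\le q_j<p_j$ from \Cref{lem:qp>0andq_j+1>q_j}, so every value $0,\dots,q_j-1$ is attained.

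The strategy is the standard ``Christoffel word'' argument. Write $u=q_m/p_m$ and $v=q_{m-1}/p_{m-1}$, and let $\alpha_{m+1}=q/p$. By \Cref{lem:farey_sum_oplus_pm},
\[ q=|a_{m+1}|q_m+\epsilon q_{m-1},\qquad p=|a_{m+1}|p_m+\epsilon p_{m-1}, \]
and by \Cref{qp-pq=pm1} we have $q_mp-qp_m=\pm1$, with the sign determined by the parity of $m$ and the sign of $a_{m+1}$.

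The key step is a mediant lemma. Let $\{s/r,s'/r'\}$ be a Farey pair of fractions in $[0,1)$ with $s'r-sr'=1$ (so $s/r<s'/r'$), and consider the fraction $(s+s')/(r+r')$. The claim is that its floor sequence is the floor sequence for $s/r$ followed by the floor sequence for $s'/r'$, the latter shifted by $s$ in value. Equivalently, its run-length sequence is obtained by concatenating the two run-length sequences, taking them in a definite order (smaller fraction first or second). The proof goes by comparing $\lfloor k(s+s')/(r+r')\rfloor$ with $\lfloor ks/r\rfloor$ for $k<r$, and with $s+\lfloor (k-r)s'/r'\rfloor$ for $r\le k<r+r'$. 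The determinant-one condition guarantees that $k(s+s')/(r+r')$ and $ks/r$ differ by less than the distance to the next integer.

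Iterating the mediant lemma $|a_{m+1}|$ times, with $\epsilon=+1$, gives the cases (a) and (b): a concatenation of $L_{m-1}$ with $|a_{m+1}|$ copies of $L_m$. The parity of $m$ and the sign of $a_{m+1}$ decide whether $L_{m-1}$ sits at the front or the back, according to which of $u,v$ is smaller; that is read off from the sign in \Cref{qp-pq=pm1}.

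For $\epsilon=-1$, we instead write $\alpha_m$ itself as a mediant of $\alpha_{m-1}$ and the fraction $(q_m-q_{m-1})/(p_m-p_{m-1})$. Applying the lemma to this shows that $L_m$ splits as $L_{m-1}*(L_m-L_{m-1})$, or in the reverse order. Then $q/p$ is $|a_{m+1}|-1$ iterated mediants of $L_m$ with the complementary piece, which yields cases (c) and (d). We will read the displayed formula in (1)(c) as intended for $L_{m+1}$.

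The main obstacle is the boundary behaviour at the junction of two concatenated pieces. The first entry of a floor sequence is special, since $k=0$ contributes to the value $0$, and this could break exact concatenation by shifting one count between adjacent runs. We would handle this by proving the mediant lemma in the precise form ``$\lfloor k t\rfloor=\lfloor k u\rfloor$ for $0\le k<r$ whenever $t$ lies strictly between $u$ and the next Farey neighbor.'' We would check the orientation carefully in each of the eight cases, first verifying small examples such as $[2,2]$ and $[2,-2]$ to pin down the signs.
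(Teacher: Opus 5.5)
Your proposal is correct and follows essentially the paper's argument. The paper derives $\lfloor (ap_m+b)q/p\rfloor=aq_m+\lfloor bq_m/p_m\rfloor$ and $\lfloor (Ap_m+c)q/p\rfloor=Aq_m+\lfloor cq_{m-1}/p_{m-1}\rfloor$ from $q=Aq_m+q_{m-1}$ and $q_{m-1}p_m-q_mp_{m-1}=1$ by the same determinant-one estimate, which amounts to your mediant lemma iterated $|a_{m+1}|$ times; it dismisses (c),(d) as ``similar,'' where your auxiliary neighbour $(q_m-q_{m-1})/(p_m-p_{m-1})$ gives a cleaner reduction. When writing it up, there are three adjustments: drop the hypothesis ``in $[0,1)$'' from your lemma, since its proof never uses it and the auxiliary fraction equals $1/1$ when $\alpha_1=1/2$, $\alpha_2=[2,-2]=2/3$; note that your boundary worry is vacuous for $m\ge 2$, because every block has $q_j\ge 1$, so the shifted blocks occupy disjoint consecutive value ranges; and expect to find in each of (1)(c), (1)(d), (2)(c), (2)(d) that $L_{m-1}$ sits at the opposite end of $L_m$ from what the paper's definitions of $S_2-S_1$ and $-S_1+S_2$ literally require, so those formulas hold only under the natural reading $(X*S)-S=X$ and $-S+(S*X)=X$.
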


\begin{proof}
    Let $A:=|a_{m+1}|$.
    According to \Cref{lem:farey_sum_oplus_pm} and \Cref{qp-pq=pm1}, we may prove this by cases.

    We will prove the cases (1)(a) and (2)(b). Other cases have similar arguments. 
    
    In cases (1)(a) and (2)(b), we have $q=Aq_{m}+q_{m-1}$, $p=Ap_{m}+p_{m-1}$ and $q_{m-1}p_m-q_mp_{m-1}=1$.

        We prove that for $0\le a<A$, $0\le b<p_m$, and $0\le c<p_{m-1}$,
        \begin{equation} \label{1a_and_2b} \begin{gathered}
        \left\lfloor \frac{(ap_m+b)q}{p}\right\rfloor=aq_m+\left\lfloor \frac{bq_m}{p_m}\right\rfloor
        \\ \text{and }\quad \left\lfloor \frac{(Ap_m+c)q}{p}\right\rfloor=Aq_m+\left\lfloor \frac{cq_{m-1}}{p_{m-1}}\right\rfloor
        \end{gathered} \end{equation}
        Suppose $n\le bq_m/p_m<n+1$. Then we have
        \begin{align*}
            (Aq_m+q_{m-1})(ap_m+b)&-(aq_m+n)(Ap_m+p_{m-1})
            \\ &=a(q_{m-1}p_m-q_mp_{m-1})+bq-np
            \\ &=a+bq-np
            \\ &\ge a+bq-\frac{bq_m}{p_m}p
            \\ &=\frac{ap_m+bp_m(Aq_m+q_{m-1})-bq_m(Ap_m+p_{m-1})}{p_m}
            \\ &=\frac{ap_m+b(q_{m-1}p_m-q_mp_{m-1})}{p_m}
            \\ &=\frac{ap_m+b}{p_m}
            \\ &\ge 0.
        \end{align*}
        Because $a+bq-\frac{bq_m}{p_m}p=\frac{ap_m+b}{p_m}$, we also have
        \begin{align*}
            q(ap_m+b)-(aq_m+n)p &=[a+bq-np-(a+bq-\frac{bq_m}{p_m}p)]+\frac{ap_m+b}{p_m}
            \\ & =\frac{bq_m}{p_m}p-np+\frac{ap_m+b}{p_m}
            \\ &=\frac{(bq_m-np_m)p+(ap_m+b)}{p_m}
            \\ &<\frac{(p_m-1)p+(A-1)p_m+p_m}{p_m}
            \\ &=\frac{p_mp+Ap_m-(Ap_m+p_{m-1})}{p_m}
            \\ &<p.
        \end{align*}
        Therefore, we have
        \begin{gather*}
            0\le q(ap_m+b) -(aq_m+n)p<p \\ \text{and }\quad  aq_m+n\le \frac{(ap_m+b)q}{p}<aq_m+n+1,
        \end{gather*}
        which proves the first equation in \Cref{1a_and_2b}.

        Now suppose $n\le cq_{m-1}/p_{m-1}<n+1$. Then we have
        \begin{align*}\label{1a_and_2b}
            (Aq_m+q_{m-1})(A &p_m +c)-(Aq_m+n)(Ap_m+p_{m-1})
            \\ &=A(q_{m-1}p_m-q_mp_{m-1})+cq-np
            \\ &\ge A+cq-\frac{cq_{m-1}}{p_{m-1}}p
            \\ &=\frac{Ap_{m-1}+cp_{m-1}(Aq_m+q_{m-1})-cq_{m-1}(Ap_m+p_{m-1})}{p_{m-1}}
            \\ &=\frac{Ap_{m-1}+Ac(q_{m}p_{m-1}-q_{m-1}p_{m})}{p_{m-1}}
            \\ &=\frac{Ap_{m-1}-Ac}{p_{m-1}}
            \\ &\ge 0.
        \end{align*}
        Then because $A+cq-\frac{cq_{m-1}}{p_{m-1}}p=\frac{Ap_{m-1}-Ac}{p_{m-1}}$, we have
        \begin{align*}
            q(Ap_m+c)-(Aq_m+n)p &=\frac{cq_{m-1}}{p_{m-1}}p-np+\frac{Ap_{m-1}-Ac}{p_{m-1}}
            \\ &=\frac{(cq_{m-1}-np_{m-1})p+A(p_{m-1}-c)}{p_{m-1}}
            \\ &<\frac{(p_{m-1}-1)p+Ap_{m}}{p_{m-1}}
            \\ &=\frac{p_{m-1}p+Ap_m-(Ap_m+p_{m-1})}{p_{m-1}}
            \\ &<p.
        \end{align*}
        This completes the proof of the second equation in \Cref{1a_and_2b} in the cases (1)(a) and (2)(b). Other cases are similar.
        
    Then the fact that $L_{m-1}$ is a subsequence of $L_m$ at the beginning (or at the ending) in cases (1)(d) and (2)(c) (or in cases (1)(c) and (2)(d)) follows from the proof above.
\end{proof}

\subsection{Representations and matrix computations}

We introduce the following notations used to compute $\epsilon$ and the representation of the word $w$ defined in \Cref{knotgrouppres}.

\begin{definition}\label{notation_boundary_slope_2}
    Under the setting of \Cref{thm:A} and \Cref{notation_boundary_slope}, let $(x_\alpha,y_\alpha)$ be an intersection point of the irreducible component of the trefoil knot and the canonical component of the knot $\alpha=q/p=[a_1,a_2,\dots,a_m]$. Since the irreducible component of the trefoil knot corresponds to $y-x^2+1=0$, we must have $y_\alpha-x_\alpha^2+1=0$, $y_\alpha=x_\alpha^2-1$.

    A non-abelian representation with character $(x_\alpha,y_\alpha)$ is a conjugate in $\mathrm{SL}_2(\CC)$ to a representation $\rho$ with $A=\rho(a)$ and $B=\rho(b)$ given by
\[
A=\begin{bmatrix}
    \theta& 1\\ 0& \frac 1{\theta}
\end{bmatrix} \quad \text{ and } \quad B=\begin{bmatrix}
    \theta& 0\\ t& \frac 1{\theta}
\end{bmatrix}
\]
where $\theta+1/\theta=x_\alpha$ and $y_\alpha=2-t$. Also, $\theta$ and $t$ satisfy the relation
\[
t=2-y_\alpha = 2-(x_\alpha^2-1) = 1-\theta^2-1/\theta^2.
\]

Define
    \begin{gather*}
        w_a^+(\alpha_j)=a^{\zeta_j^+(0)}b^{\zeta_j^+(1)}\cdots (\text{$a$ or $b$})^{\zeta_j^+(p-1)} , \quad \zeta_j^+(i)=(-1)^{\left\lfloor\frac{iq_j}{p_j}\right\rfloor},
        \\ w_a^-(\alpha_j) , \text{ and }\quad \zeta_j^-(i)=-(-1)^{\left\lfloor\frac{iq_j}{p_j}\right\rfloor} .
    \end{gather*}
We define $w_b^\pm(\alpha_j)$ similarly, starting with $b^{\zeta_j^\pm(0)}$ instead of $a^{\zeta_j^\pm(0)}$. We also define $W_{a,b}^{\pm}(\alpha_j)=\rho(w_{a,b}^{\pm}(\alpha_j))$.
\end{definition}

\begin{proposition}\label{computation_of_epsilon}
    If $\alpha = [3\sigma_1,2\zeta_1,\dots,2\zeta_n,3\sigma_{n+1}]$ is as in \Cref{prop:ORS}, then
    \[
    \epsilon=4\#\{1\le i\le n\mid \zeta_i<0\}-4\#\{1\le i\le n+1\mid \sigma_i<0\}-6\sum_1^{n+1}\sigma_i+2.
    \]
\end{proposition}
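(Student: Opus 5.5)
We will reduce $\epsilon$ to an alternating sum over the block sequence $L_m$ of \Cref{notation_boundary_slope}, and then compute that sum by induction along the continued fraction using \Cref{prop:recursionLj}. Since $\epsilon_i=(-1)^{\lfloor iq/p\rfloor}$ is constant on runs of $i$ where $\lfloor iq/p\rfloor=r-1$, and these runs have lengths $L_m(r)$, we get
\[ \sum_{i=0}^{p-1}(-1)^{\lfloor iq/p\rfloor}=\sum_{r=1}^{q}(-1)^{r-1}L_m(r)=:S(L_m). \]
The $i=0$ term contributes $+1$, so
\[ \epsilon=-2\sum_{i=1}^{p-1}\epsilon_i=2-2S(L_m). \]
The proposition is therefore equivalent to
\[ S(L_{2n+1})=3\sum_{i=1}^{n+1}\sigma_i+2\#\{i\mid\sigma_i<0\}-2\#\{i\mid\zeta_i<0\}. \]
Before starting we normalize so that $a_1=3\sigma_1>0$, as \Cref{notation_boundary_slope} requires; we will check that replacing $\alpha$ by its mirror changes both sides compatibly. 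All other entries satisfy $|a_i|\ge 2$ automatically.

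The key tool is the concatenation rule
\[ S(X*Y)=S(X)+(-1)^{|X|}S(Y), \]
where $|L_j|=q_j$. By \Cref{lem:cons}, $q_j$ is even for even $j$. Since $\{\alpha_{j-1},\alpha_j\}$ is a Farey pair (\Cref{lem:farey_truncs}), $q_j$ is then odd for odd $j$. Consequently:
\begin{itemize}
\item $k\times L_j$ has alternating sum $kS(L_j)$ when $j$ is even;
\item $k\times L_j$ has alternating sum $S(L_j)$ or $0$, according to the parity of $k$, when $j$ is odd.
\end{itemize}
For the subtracted pieces we use
\[ S(L_j-L_{j-1})=(-1)^{q_{j-1}}\bigl(S(L_j)-S(L_{j-1})\bigr) \]
and the analogous identity for $-L_{j-1}+L_j$.

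We will prove by simultaneous induction on $k$ that
\[ S(L_{2k})=1 \quad\text{and}\quad S(L_{2k+1})=3\sum_{i\le k+1}\sigma_i+2\#\{i\le k+1\mid\sigma_i<0\}-2\#\{i\le k\mid\zeta_i<0\}. \]
The base cases are $L_0=(1)$ and $L_1=(3\sigma_1)$, since $q_1=1$.

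For the step to odd index $2k+1$, with $a=3\sigma_{k+1}$ and $L_{2k}$ of even length, \Cref{prop:recursionLj}(1) gives $S(L_{2k+1})$ as follows:
\begin{itemize}
\item $3|\sigma_{k+1}|\,S(L_{2k})\pm S(L_{2k-1})$ in the same-sign cases;
\item $(3|\sigma_{k+1}|-1)S(L_{2k})$ plus a difference term in the mixed-sign cases.
\end{itemize}
The difference term is what produces the correction $+2$ when $\sigma_{k+1}<0$.

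For the step to even index $2k+2$, with $a=2\zeta_{k+1}$ and $L_{2k+1}$ of odd length, an even number of copies of $L_{2k+1}$ contributes $0$. Hence $S(L_{2k+2})$ collapses to $\pm S(L_{2k})$ in the same-sign cases, and the mixed-sign cases contribute one copy together with a difference term. We must verify that this gives $S(L_{2k+2})=1$ again, and that the $-2$ for $\zeta_{k+1}<0$ is carried into the next odd step.

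The main obstacle is the sign bookkeeping. The orientation of the concatenation (prefix versus suffix) in \Cref{prop:recursionLj} depends on the parity of $m$ and on the signs of $a_m,a_{m+1}$, and the parities of the offsets in the $\pm$ terms must be tracked carefully. We will first organize all eight cases into a single table expressing $(S(L_{m+1}),(-1)^{q_{m+1}})$ in terms of $S(L_m),S(L_{m-1})$. If the invariant $S(L_{2k})=1$ turns out to be $\pm1$ with a sign depending on the $\zeta_i$ (as in \Cref{lem:cons}), we will strengthen the inductive hypothesis to track that sign and recheck that the final combination still yields the stated formula.
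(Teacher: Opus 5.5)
Your proposal is correct and takes essentially the paper's route. Your reduction $\epsilon=2-2S(L_{2n+1})$ is the paper's reduction to the exponent sum $e(w_a^+(\alpha))$, and your simultaneous induction on $S(L_{2k})=1$ and the odd-index formula, through the cases of \Cref{prop:recursionLj}, is the paper's proof; your concatenation rule and the mirror normalization (both sides change sign under $\alpha\mapsto-\alpha$) make explicit what the paper leaves implicit.

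One caution for your table: in the mixed-sign cases of \Cref{prop:recursionLj} the printed subtractions are oriented the wrong way round. As the last sentence of its proof indicates, $L_{m-1}$ is initial in $L_m$ in cases (1)(d) and (2)(c), and terminal in cases (1)(c) and (2)(d). Reading the statement literally would, for instance, lose the $+2$ in case (1)(c), where the correct computation gives $S(L_{m+1})=S(L_{m-1})+3\sigma_{n+1}+2$.
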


\begin{proof}
    Let $m=2n$. 
    It suffices to prove that the exponent of $w_a^+(\alpha_{m+1})$ is $e(w_a^+(\alpha_{m+1}))=3\sum_1^{n+1}\sigma_i+2\#\{1\le i\le n+1\mid \sigma_i<0\}-2\#\{1\le i\le n\mid \zeta_i<0\}$. We also prove that $e(w_a^+(\alpha_{m}))=1$, then we may prove these two claims by induction using \Cref{prop:recursionLj}. Notice that $e(w_a^+(\alpha_j))=\sum_{\text{odd}\ i}L_j(i)-\sum_{\text{even}\ i}L_j(i)=-\sum_{i=1}^{q_j}(-1)^iL_j(i)$. 

    For the base case, we have $e(w_a^+(\alpha_{-1}))=0$ and $e(w_a^+(\alpha_0))=1$. Now suppose the two claims hold for $j\le m-1$, which is odd. Similar to proof of \Cref{lem:cons}, we know $q_{m-2}$ is even and $q_{m-1}$ is odd. According to \Cref{prop:recursionLj}, we split into four cases:
    \begin{enumerate}
        \item $\sigma_n,\zeta_n>0$:
        Because $L_{m-2}$ has even length $q_{m-2}$ and $L_{m-1}$ has odd length $q_{m-1}$, by \Cref{prop:recursionLj} we have
        \begin{align*}
        -\sum_{i=1}^{q_m}(-1)^iL_m(i) &=-\sum_{i=1}^{q_{m-2}}(-1)^iL_{m-2}(i)-\sum_{i=1}^{q_{m-1}}(-1)^{i+q_{m-2}}L_{m-1}(i)-\cdots
        \\ & \hspace{1.9in} -\sum_{i=1}^{q_{m-1}}(-1)^{i+2\zeta_nq_{m-2}}L_{m-1}(i)
        \\ &=1+e(w_a^+(\alpha_{m-1}))-e(w_a^+(\alpha_{m-1}))+\cdots -e(w_a^+(\alpha_{m-1}))
        \\ &=1.
        \end{align*}
        \item $\sigma_n,\zeta_n<0$: This case can be proven using a similar argument as above.
        \item $\sigma_n>0,\zeta_n<0$: By \Cref{prop:recursionLj} we have
        \begin{align*}
            -\sum_{i=1}^{q_m}(-1)^iL_m(i) &= e(w_a^+(\alpha_{m-1}))-e(w_a^+(\alpha_{m-1}))+\cdots +e(w_a^+(\alpha_{m-1}))
            \\ & \hspace{.9in} -\sum_{i=q_{m-2}+1}^{q_{m-1}}(-1)^{i+(2|\zeta_n|-1)q_{m-1}-q_{m-2}}L_{m-1}(i)
            \\ &=e(w_a^+(\alpha_{m-1}))+\sum_{i=1}^{q_{m-1}}(-1)^i L_{m-1}(i)-\sum_{i=1}^{q_{m-2}}(-1)^i L_{m-2}(i)
            \\ &=1.
        \end{align*}
        \item $\sigma_n<0,\zeta_n>0$: This case can be proven using a similar argument as above.
    \end{enumerate}

    Then suppose the claims hold for $j\le m$. We know $q_{m-1}$ is odd and $q_{m}$ is even.
    We also split into four cases:
    \begin{enumerate}
        \item $\zeta_n,\sigma_{n+1}>0$: By \Cref{prop:recursionLj}, we have
        \begin{align*}
            -\sum_{i=1}^{q}(-1)^iL_{m+1}(i) &=\sum_{1}^{3\sigma_{n+1}}e(w_a^+(\alpha_m))-\sum_{i=1}^{q_{m-1}}(-1)^{i+3\sigma_{n+1}q_m}L_{m-1}(i)
            \\ &=3\sigma_{n+1}+3\sum_{i=1}^{n}\sigma_i+2\#\{1\le i\le n\mid \sigma_i<0\}
            \\ & \hspace{1.25in} -2\#\{1\le i\le n-1\mid \zeta_i<0\}
            \\ &=3\sum_{i=1}^{n+1}\sigma_i+2\#\{1\le i\le n+1\mid \sigma_i<0\}
            \\ & \hspace{1.5in} -2\#\{1\le i\le n\mid \zeta_i<0\}.
        \end{align*}
        \item $\zeta_n,\sigma_{n+1}<0$: This case can be proven using a similar argument as above.
        \item $\zeta_n>0,\sigma_{n+1}<0$: By \Cref{prop:recursionLj}, we have
        \begin{align*}
            -\sum_{i=1}^{q}(-1)^iL_{m+1}(i)&=-\sum_{i=1}^{q_m-q_{m-1}}(-1)^iL_m-\sum_1^{3|\sigma_{n+1}|-1}e(w_a^+(\alpha_m))
            \\ &=e(w_a^+(\alpha_m))+e(w_a^+(\alpha_{m-1}))-(3|\sigma_{n+1}|-1)
            \\ & =3\sum_{i=1}^{n}\sigma_i+2\#\{1\le i\le n\mid \sigma_i<0\} \\ & \hspace{.9in} -2\#\{1\le i\le n-1\mid \zeta_i<0\}  +3\sigma_{n+1}+2
            \\  &=3\sum_{i=1}^{n+1}\sigma_i+2\#\{1\le i\le n+1\mid \sigma_i<0\} \\ & \hspace{2in} -2\#\{1\le i\le n\mid \zeta_i<0\}.
        \end{align*}
        \item $\zeta_n<0,\sigma_{n+1}>0$: This case can be proven using a similar argument as above. \qedhere
    \end{enumerate}
\end{proof}

\begin{proposition}\label{computation_of_w}
    Under the setting of \Cref{notation_boundary_slope_2}, for $\alpha=q/p=[3\sigma_1,2\zeta_1,\dots,2\zeta_n,3\sigma_{n+1}]$,
    let $k=\#\{1\le i\le n+1\mid \sigma_i<0\}-\#\{1\le i\le n\mid \zeta_i<0\}$. Then
    \begin{enumerate}
     \item When $\sum_1^{n+1}\sigma_i$ is even:
    \begin{gather*}
    W_{a}^+(\alpha)=\pm A^{2k},\quad W_a^-(\alpha)=\pm A^{-2k};
    \\ W_b^+(\alpha)=\pm B^{2k}, \quad W_b^-(\alpha)=\pm B^{-2k}. \end{gather*}
    \item When $\sum_1^{n+1}\sigma_i$ is odd:
    \begin{gather*}
    W_{a}^+(\alpha)=\pm ABAB^{2k} ,\quad W_a^-(\alpha)=\pm A^{-1}B^{-1}A^{-1}B^{-2k};
    \\ W_b^+(\alpha)=\pm BABA^{2k} ,\quad W_b^-(\alpha)=\pm B^{-1}A^{-1}B^{-1}A^{-2k}.
    \end{gather*}
\end{enumerate}
\end{proposition}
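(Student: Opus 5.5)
We will prove \Cref{computation_of_w} by induction along the truncations $\alpha_j$, using the block decompositions of \Cref{prop:recursionLj}. We run the induction in the same pattern as \Cref{computation_of_epsilon}, carrying a joint statement for even and odd $j$. The word $w_a^\pm(\alpha_j)$ is read off from $L_j$: it is a product of alternating blocks, where the $i$-th block consists of $L_j(i)$ letters alternating between $a$ and $b$, all carrying exponent $(-1)^{i-1}$ (times the sign $\pm$). Concatenating $L$-sequences therefore corresponds to concatenating words. The starting letter ($a$ or $b$) of each later piece is determined by the parity of the total length $p$ of the preceding pieces. The global exponent sign of each later piece is determined by the parity of the length $q$ of the preceding sequence. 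In this way $W^\pm_{a}(\alpha_{j+1})$ becomes a product of copies of $W^{\pm}_{a\text{ or }b}(\alpha_j)$ and one copy of $W^{\pm}_{a\text{ or }b}(\alpha_{j-1})$ (or a partial piece, in the mixed-sign cases of \Cref{prop:recursionLj}).

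The inductive claim at even indices $j=2k'$ is that $W_{a}^\pm(\alpha_j)$ and $W_b^\pm(\alpha_j)$ are $\pm A^{\pm 1}$ and $\pm B^{\pm 1}$, suitably adjusted by the partial count $k$. This is consistent with the exponent count $e(w_a^+(\alpha_{m}))=1$ from \Cref{computation_of_epsilon}. At odd indices the claim is exactly the statement, with $k$ replaced by the partial count. The key algebraic inputs are relations specific to the trefoil component $y=x^2-1$. Since $\tr(AB^{-1})=2-t=x^2-1$, we have $\tr(AB)=x^2-\tr(AB^{-1})=1$. Hence $(AB)^3=-I$, equivalently $ABA=BAB$ up to the central sign, and $(ABA)^2=\pm(AB)^3=\pm I$ up to these relations. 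These are the images of the trefoil relations under the factorization through $\Gamma_{1/3}$. With them, a block such as $ABABAB$ collapses to $\pm I$, and $ABA\,B^{-1}A^{-1}B^{-1}=\pm I$. So $3\sigma$ repeated copies of an odd-length piece reduce to $\pm$ a short word, and $2\zeta$ copies of an even-level piece reduce to a power of $A$ or $B$. We first verify this directly for the base cases $\alpha_{-1},\alpha_0,\alpha_1=[3\sigma_1]$. For $\alpha_1$, the word $w$ is $(ab)^{\dots}$-type of length $3\sigma_1-1$, and $(AB)^3=-I$ reduces it.

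For the inductive step we split into the four sign cases of \Cref{prop:recursionLj}, as was done in \Cref{computation_of_epsilon}, and track the parity of $\sum\sigma_i$. When $\sum\sigma_i$ changes parity, the shape switches between $A^{2k}$ and $ABAB^{2k}$. Each negative $\sigma_i$ contributes the factor $B^{2}$ or $A^{2}$, arising from the ``$(|a|-1)\times L_m$ plus partial piece'' decomposition. Each negative $\zeta_i$ contributes the inverse factor. The $W^-$ and $W_b$ versions follow by symmetry: $W^-$ is $W^+$ with all exponents negated, and the swap $a\leftrightarrow b$ is realized by the automorphism exchanging $A$ and $B$, which preserves the trefoil relations.

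The main obstacle is the bookkeeping in the inductive step. For each case we must determine which letter each concatenated piece starts with and its exponent sign, which depends on parities of $p_j$ and $q_j$; \Cref{lem:cons} and \Cref{lem:epsumodd} give these parities. We must also see that the leftover partial pieces $L_m-L_{m-1}$ and $-L_{m-1}+L_m$ produce exactly $A^{\pm2}$ or $B^{\pm2}$ after the trefoil relations are applied. We will handle case (1)(a)/(2)(a) in full and reduce the mixed cases to it by writing $(|a|-1)\times L_m$ plus partial piece as $|a|\times L_m$ times the inverse of the removed $L_{m-1}$ word.
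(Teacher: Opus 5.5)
Your overall plan matches the paper's. It is an induction over the truncations $\alpha_j$ via \Cref{prop:recursionLj}, with a joint statement at even and odd levels. It uses the four sign cases with parities of $p_j,q_j$ from \Cref{lem:cons} and \Cref{lem:epsumodd}, and the relations $ABA=BAB$ and $(AB)^3=-I$. Partial pieces are handled by multiplying by the inverse of a removed sub-word. Your symmetry argument for $W^-$ and $W_b$ is valid: the substitutions $a\mapsto a^{-1}$, $b\mapsto b^{-1}$ and $a\leftrightarrow b$ preserve these relations. It is a cleaner justification of what the paper calls ``essentially the same''.

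The gap is that you never state the even-level hypothesis, which is what the induction actually runs on. ``$\pm A^{\pm1}$ and $\pm B^{\pm1}$, suitably adjusted by $k$'' cannot be verified or fed into the next step, and read literally it is false: for $k=1$ the correct value is $\pm A^{2}B^{-1}$. The paper's auxiliary claim is that for $m=2n$ and $k=\#\{i\le n\mid\sigma_i<0\}-\#\{i\le n\mid\zeta_i<0\}$,
\begin{gather*}
W_a^+(\alpha_m)=\pm A^{k+1}B^{-k},\quad W_a^-(\alpha_m)=\pm A^{-k-1}B^{k},\\
W_b^+(\alpha_m)=\pm B^{k+1}A^{-k},\quad W_b^-(\alpha_m)=\pm B^{-k-1}A^{k}.
\end{gather*}
Only with this exact form do your collapses become checkable:
\begin{itemize}
\item $W_a^+(\alpha_m)W_b^+(\alpha_m)=A^{k}(AB)A^{-k}$. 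So the $3|\sigma_{n+1}|$ copies of the even-level piece reduce by $(AB)^3=-I$, leaving a single $ABA$ when $\sigma_{n+1}$ is odd; this is the parity switch.
\item Consecutive copies of the odd-level piece pair off as $W_b^+W_b^-=\pm I$, or as $W_b^+(\alpha_{m-1})W_a^-(\alpha_{m-1})=BABA^{2k}A^{-1}B^{-1}A^{-1}B^{-2k}=I$ using $ABA^l=B^lAB$.
\end{itemize}

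Several descriptive claims in your sketch would also mislead the bookkeeping.
\begin{itemize}
\item You have the roles of the pieces reversed. The $2\zeta_n$ repeated copies are of the odd-level piece $L_{m-1}$, and in the same-sign cases they cancel entirely. The $3\sigma_{n+1}$ copies are of the even-level piece $L_m$, whose word length $p_m$ is odd.
\item The partial-piece decompositions occur when consecutive entries have opposite signs, not whenever an entry is negative. When $\zeta_n$ and $\sigma_{n+1}$ are both negative you get the reversed concatenation $L_{m-1}*(3|\sigma_{n+1}|\times L_m)$, with every copy carrying exponent sign $-$. Your reduction of the mixed cases to case (a) does not cover this case.
\item $(AB)^3=-I$ by itself only gives $ABA=-(BAB)^{-1}$, so it is not ``equivalent'' to $ABA=BAB$. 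To conclude $ABA=BAB$ you also need $\tr(BAB)=\tr(AB)\tr(B)-\tr(A)=0$, so that $(BAB)^2=-I$. Alternatively, use the factorization through $\Gamma_{1/3}$ or the paper's direct matrix check.
\item In the base case, $w_a^+(\alpha_1)$ has $p_1=3\sigma_1$ letters, not $3\sigma_1-1$, since the index starts at $0$.
\end{itemize}
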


\begin{proof}
    Let $m=2n$.
    We prove the proposition together with the following auxiliary claim simultaneously:
    for $\alpha_m=[3\sigma_1,\dots,2\zeta_n]$, let $k=\#\{1\le i\le n\mid \sigma_i<0\}-\#\{1\le i\le n\mid \zeta_i<0\}$, then
    \begin{gather*}
    W_a^+(\alpha_m)=\pm A^{k+1}B^{-k} ,\quad W_a^-(\alpha_m)=\pm A^{-k-1}B^{k};
    \\ W_b^+(\alpha_m)=\pm B^{k+1}A^{-k} ,\quad W_b^-(\alpha_m)=\pm B^{-k-1}A^{k}.
    \end{gather*}
    
    We will prove both the claim and the proposition by induction using \Cref{prop:recursionLj}.
    Notice that 
    \[ W_{a}^+(\alpha_j)=\prod_{\text{odd}\ i}A^{\zeta_{j}^+(i)}\cdot \prod_{\text{even}\ i}B^{\zeta_j^+(i)}=\prod_{i=1}^{p_j}f(i)^{\zeta_j^+(i)} ,\]
    where $f(i)=A$ when $i$ is odd and $f(i)=B$ otherwise.

    For the base case, we have $W_{a,b}^\pm(\alpha_{-1})=\rho(w_{a,b}^\pm (1/0))=I$ and $W_{a}^+(\alpha_{0})=\rho(w_{a}^+ (0/1))=A$.
    By \Cref{lem:epsumodd} and similar reasoning as in the proof of \Cref{computation_of_epsilon}, we know $q_{m-2}$ is even, $q_{m-1}$ is odd; $p_{m-2}$ is odd, and $p_{m-1}\equiv \sum_{i=1}^n\sigma_i\pmod 2$. 

    The proof of $W_{a,b}^\pm (\alpha_{m})$ and $W_{a,b}^\pm (\alpha)$ are essentially the same, so we only prove $W_a^+$ in each case.
    
    First we prove the auxiliary claim, assuming both claims hold for all $j\leq m-1$. We extend to $\alpha_m$ by appending $2\zeta_n$ to $\alpha_{m-1}$, splitting into four cases according to the signs of $\sigma_n$ and $\zeta_n$:
    \begin{enumerate}
        \item $\sigma_n,\zeta_n>0$: By \Cref{prop:recursionLj} we have
        \begin{align*}
            \prod_{i=1}^{p_m} & f(i)^{\zeta_m^+(i)}
            \\ =& \prod_{i=1}^{p_{m-2}}f(i)^{\zeta_{m-2}^+(i)}\cdot \prod_{i=1}^{p_{m-1}}f(i+p_{m-2})^{(-1)^{q_{m-2}}\zeta_{m-1}^+(i)}\\
            &\cdot \prod_{i=1}^{p_{m-1}}f(i+p_{m-2}+p_{m-1})^{(-1)^{q_{m-2}+q_{m-1}}\zeta_{m-1}^+(i)}\\
            &\cdots \prod_{i=1}^{p_{m-1}}f(i+p_{m-2}+(2\zeta_n-1)p_{m-1})^{(-1)^{q_{m-2}+(2\zeta_n-1)q_{m-1}}\zeta_{m-1}^+(i)}.
        \end{align*}
        By induction hypothesis, if $p_{m-1}$ is even, for some $k$ we have
        \begin{align*}
            W_a^+(\alpha_m) &=W_a^+(\alpha_{m-2})\cdot (W_b^+(\alpha_{m-1})\cdot W_b^-(\alpha_{m-1}))^{\zeta_n}\\
             &=\pm W_a^+(\alpha_{m-2})\cdot (B^{2k}\cdot B^{-2k})^{\zeta_n}\\
             &=\pm A^{\#\{1\le i\le n\mid \sigma_i<0\}-\#\{1\le i\le n\mid \zeta_i<0\}+1}B^{\#\{1\le i\le n\mid \zeta_i<0\}-\#\{1\le i\le n\mid \sigma_i<0\}};
        \end{align*}
        if $p_{m-1}$ is odd, for some $k$ we have
        \begin{align*}
            W_a^+(\alpha_m) &=W_a^+(\alpha_{m-2})\cdot (W_b^+(\alpha_{m-1})\cdot W_a^-(\alpha_{m-1}))^{\zeta_n}\\
             &=\pm W_a^+(\alpha_{m-2})\cdot (BABA^{2k}\cdot A^{-1}B^{-1}A^{-1}B^{-2k})^{\zeta_n}=\pm W_a^+(\alpha_{m-2})
        \end{align*}
        because $BABA^{2k}\cdot A^{-1}B^{-1}A^{-1}B^{-2k}=\pm I$ for any $k$: a direct computation using $t=1-\theta^2-1\theta^2$ shows that $BAB=ABA$, so by induction on $l$, we have $ABA^{l}=B(ABA^{l-1})=B(B^{l-1}AB)=B^lAB$. Thus, in our case we have
        \begin{align*}
        BABA^{2k}\cdot A^{-1}B^{-1}A^{-1}B^{-2k} &=ABA^{2k+1}\cdot A^{-1}B^{-1}A^{-1}B^{-2k}
        \\ &=B^{2k+1}AB\cdot A^{-1}B^{-1}A^{-1}B^{-2k}
        \\ &=B^{2k}ABA\cdot A^{-1}B^{-1}A^{-1}B^{-2k}
        \\ &=I.
        \end{align*}

        \item $\sigma_{n},\zeta_n<0$: similar to (1).
        
        \item $\sigma_n>0,\zeta_n<0$: By \Cref{prop:recursionLj} we have
        \begin{align*}
            \prod_{i=1}^{p_m} & f(i)^{\zeta_m^+(i)}
            \\ = & \prod_{i=1}^{p_{m-1}}f(i)^{\zeta_{m-1}^+(i)}\cdot \prod_{i=1}^{p_{m-1}}f(i+p_{m-1})^{(-1)^{q_{m-1}}\zeta_{m-1}^+(i)}\\
            &\cdots \prod_{i=1}^{p_{m-1}}f(i+(2|\zeta_n|-2)p_{m-1})^{(-1)^{(2|\zeta_n|-2)q_{m-1}}\zeta_{m-1}^+(i)}\\
            &\cdot \prod_{i=p_{m-2}+1}^{p_{m-1}}f(i+(2|\zeta_n|-1)p_{m-1}-p_{m-2})^{(-1)^{(2|\zeta_n|-1)q_{m-1}+q_{m-2}}\zeta_{m-1}^+(i)}.
        \end{align*}
        By induction hypothesis, if $p_{m-1}$ is even, for $k=\#\{1\le i\le n-1\mid \sigma_i<0\}-\#\{1\le i\le n-1\mid \zeta_i<0\}$ we have
        \begin{align*}
            W_a^+(\alpha_m)&=(W_a^+(\alpha_{m-1})\cdot W_a^-(\alpha_{m-1}))^{|\zeta_n|-1}\cdot W_a^+(\alpha_{m-1})
            \\ & \hspace{2.4in}  \cdot (W_b^-(\alpha_{m-2})^{-1}\cdot W_b^-(\alpha_{m-1}))
            \\ &=\pm A^{2k}\cdot A^{-k}B^{k+1}\cdot B^{-2k}=\pm A^kB^{1-k}
            \\ &=\pm A^{\#\{1\le i\le n\mid \sigma_i<0\}-\#\{1\le i\le n\mid \zeta_i<0\}+1}B^{\#\{1\le i\le n\mid \zeta_i<0\}-\#\{1\le i\le n\mid \sigma_i<0\}};
        \end{align*}
        if $p_{m-1}$ is odd, for the same $k$ we have
        \begin{align*}
            W_a^+(\alpha_m)&=(W_a^+(\alpha_{m-1}\cdot W_b^-(\alpha_{m-1})))^{|\zeta_n|-1}\cdot W_a^+(\alpha_{m-1})
            \\ & \hspace{1.09in} \cdot(W_a^-(\alpha_{m-2})^{-1}\cdot W_a^-(\alpha_{m-1}))
            \\ &=\pm ABAB^{2k}\cdot B^{-k}A^{k+1}\cdot A^{-1}(B^{-1}A^{-1}B^{-2k})
            \\ &=\pm ABAB^kA^k(A^{-2k}B^{-1}A^{-1})
            \\ &=\pm (BAB^{k+1})(A^{-k}B^{-1}A^{-1})
            \\ &=\pm (A^{k+1}BA)(B^{-1}A^{-1}B^{-k})
            \\ &=\pm A^kB^{1-k}
        \end{align*}
        because $ABA=BAB$, $ABA^l=B^lAB$ as shown above, and also $ABABAB = -I$.
        \item $\sigma_{n}<0,\zeta_n>0$: similar to (3).
    \end{enumerate}

    We now prove the main claim for $\alpha=\alpha_{m+1}$. Assume both claims hold for $j\le m$. We know $q_{m-1}$ is odd, $q_m$ is even; and $p_{m-1}\equiv \sum_{i=1}^{n}\sigma_i\pmod 2$, $p_m$ is odd. We extend to $\alpha$ by appending $3\sigma_{n+1}$ to $\alpha_m$ and again split into four cases according to the signs of $\zeta_n$ and $\sigma_{n+1}$: 
    \begin{enumerate}
        \item $\zeta_n,\sigma_{n+1}>0$: By \Cref{prop:recursionLj}, we have
        \begin{align*}
            \prod_{i=1}^{p}f(i)^{\zeta_{m+1}^+(i)}=&\prod_{i=1}^{p_{m}}f(i)^{\zeta_m^+(i)}\cdot \prod_{i=1}^{p_{m}}f(i+p_{m})^{(-1)^{q_{m}}\zeta_m^+(i)}\\
            &\cdots \prod_{i=1}^{p_{m}}f(i+(3\sigma_{n+1}-1)p_{m})^{(-1)^{(3\sigma_{n+1}-1)q_{m}}\zeta_m^+(i)}\\
            &\cdot \prod_{i=1}^{p_{m-1}}f(i+3\sigma_{n+1}p_{m})^{(-1)^{3\sigma_{n+1}q_m}\zeta_{m-1}^+(i)}.
        \end{align*}
        When $\sigma_{n+1}$ is even, for $k=\#\{1\le i\le n\mid \sigma_i<0\}-\#\{1\le i\le n\mid \zeta_i<0\}$ we have
        \begin{align*}
        W_a^+(\alpha)=&(W_a^+(\alpha_m)\cdot W_b^+(\alpha_m))^{3\sigma_{n+1}/2}\cdot W_a^+(\alpha_{m-1})\\
        =&\pm A^k(AB)^{3\sigma_{n+1}/2}A^{-k}\cdot W_a^+(\alpha_{m-1})=W_a^+(\alpha_{m-1}),
        \end{align*}
        and notice that $\sum_{1}^{n+1}\sigma_{i}\equiv \sum_{1}^{n}\sigma_{i}\pmod 2$, and $\#\{1\le i\le n\mid \sigma_i<0\}-\#\{1\le i\le n-1\mid \zeta_i<0\}=\#\{1\le i\le n+1\mid \sigma_i<0\}-\#\{1\le i\le n\mid \zeta_i<0\}$.

        When $\sigma_{n+1}$ is odd, for the same $k$ we have
        \begin{align*}
            W_a^+(\alpha)&=(W_a^+(\alpha_m)\cdot W_b^+(\alpha_m))^{(3\sigma_{n+1}-1)/2}\cdot W_a^+(\alpha_m)\cdot W_{b}^{+}(\alpha_{m-1})\\
            &=\pm A^k(AB)^{(3\sigma_{n+1}+3)/2}(AB)^{-1}\cdot B^{1-k}\cdot W_{b}^{+}(\alpha_{m-1})\\
            &=\pm (A^kB^{-1}A^{-1})B^{1-k}\cdot W_{b}^{+}(\alpha_{m-1})
            \\ &= \pm(B^{-1}A^{-1}B^k)B^{1-k}\cdot W_{b}^{+}(\alpha_{m-1})\\
            &=\pm ABA\cdot W_{b}^{+}(\alpha_{m-1}).
        \end{align*}
        Therefore, when $p_{m-1}$ is even, $p$ is odd and
        \[
        W_a^+(\alpha)=\pm ABA\cdot W_b^+(\alpha_{m-1})=\pm ABA\cdot B^{2k};
        \]
        when $p_{m-1}$ is odd, $p$ is even and
        \[
        W_a^+(\alpha)=\pm ABA\cdot W_b^+(\alpha_{m-1})=\pm ABA\cdot BABA^{2k}=\pm A^{2k}.
        \]
        
        \item $\zeta_n,\sigma_{n+1}<0$: similar to (1).
        
        \item $\zeta_n>0,\sigma_{n+1}<0$: similar to (1) using the ideas of the auxiliary claim (3).
        
        \item $\zeta_n<0,\sigma_{n+1}>0$: similar to (3). \qedhere
    \end{enumerate}
\end{proof}

\subsection{Proof of Theorem B}

\begin{proof}[Proof of \Cref{thm:B}]
    We show that
    \[
    \tr(\rho(\mu^{6\sum\limits_{i=1}^{n+1}\sigma_i}\lambda))=\pm 2\in \cO_{\mathfrak K}.
    \]
    Then by \Cref{boundary_slope}, this implies $\mu^{6\sum_{1}^{n+1}\sigma_i}\lambda$ is the boundary slope of the essential surface detected by this action.

    According to \Cref{knotgrouppres}, the meridian is $\mu=a$, and the longitude is $\lambda=w^*wa^{\epsilon}$. By definition, we know $w=b^{-1}w_b^+(\alpha)$, and $w^*b=w_b^-(\alpha)^{-1}$, $w^*=w_b^-(\alpha)^{-1}b^{-1}$.
    Since $K$ is a knot, $\sum_{i=1}^{n+1}\sigma_i$ is odd by \Cref{lem:epsumodd}, so we apply \Cref{computation_of_w}(2). 
    Then following \Cref{computation_of_epsilon}, for $k=\#\{1\le i\le n+1\mid \sigma_i<0\}-\#\{1\le i\le n\mid \zeta_i<0\}$, we know $\epsilon=-4k-6\sum_1^{n+1}\sigma_i+2$, and we may compute that
    \begin{align*}
        \rho \left( \mu^{6\sum\limits_{i=1}^{n+1}\sigma_i} \lambda \right) &=\rho\left(a^{6\sum\limits_{i=1}^{n+1}\sigma_i}w_b^-(\alpha)^{-1}b^{-2}w_b^+(\alpha)a^\epsilon\right)
        \\ &=A^{6\sum\limits_{i=1}^{n+1}\sigma_i}W_b^-(\alpha)^{-1}B^{-2}W_b^+(\alpha)A^\epsilon
        \\ & =\pm A^{6\sum\limits_{i=1}^{n+1}\sigma_i}\cdot A^{2k}BAB\cdot B^{-2}\cdot BABA^{2k}\cdot A^\epsilon
        \\ &=\pm (A^{6\sum\limits_{i=1}^{n+1}\sigma_i}\cdot A^{2k}BA)(ABA^{2k}\cdot A^\epsilon)
        \\ & =\pm (BAB^{2k+6\sum\limits_{i=1}^{n+1}\sigma_i})(B^{2k+\epsilon}AB)
        \\ &=\pm BAB^{4k+6\sum\limits_{i=1}^{n+1}\sigma_i-4k-6\sum\limits_{i=1}^{n+1}\sigma_i+2}AB
        \\ &=\pm (BAB)(BAB)=\pm I.
    \end{align*}

    Therefore, we conclude that
    \[
    \tr \left( \rho \left( \mu^{N}\lambda \right) \right) = \tr \left( \pm \begin{bmatrix}
        1&0\\ 0&1
    \end{bmatrix} \right) = \pm 2,
    \]
    where $N=6\sum\limits_{i=1}^{n+1}\sigma_i$.
    By \Cref{boundary_slope} this means $\mu^{6\sum_{1}^{n+1}\sigma_i}\lambda$ is the boundary slope of the essential surface detected by this action.
\end{proof}